\documentclass[reqno,11pt]{article}
\usepackage{a4wide}
\usepackage{color,eucal,enumerate,mathrsfs,amsthm}
\usepackage[normalem]{ulem}
\usepackage{amsmath,amssymb,amsthm,epsfig,bbm,makecell}
\numberwithin{equation}{section}

\usepackage{amsfonts}
\usepackage{dsfont}
\usepackage{mathtools}
\usepackage{leftidx}
\usepackage{graphicx}
\usepackage{esint}
\usepackage{authblk}
\usepackage{multirow}


\usepackage[pdfborder={0 0 0}]{hyperref}  

\usepackage[latin1]{inputenc}
\usepackage[english]{babel}

\frenchspacing

\newtheorem{Definition}{Definition}[section]
\newtheorem{Proposition}[Definition]{Proposition}
\newtheorem{Lemma}[Definition]{Lemma}
\newtheorem{Theorem}[Definition]{Theorem}
\newtheorem{Corollary}[Definition]{Corollary}
\theoremstyle{definition}

\allowdisplaybreaks

%
\newcommand{\B}{\mathbb{B}}

\newcommand{\N}{\mathbb{N}}

\newcommand{\R}{\mathbb{R}}



\newcommand{\mm}{{\mbox{\boldmath$m$}}}







\newcommand{\ggamma}{{\mbox{\boldmath$\gamma$}}}

\newcommand{\ppi}{{\mbox{\boldmath$\pi$}}}



\newcommand{\sfd}{{\sf d}}

\newcommand{\sfh}{{\sf h}}

\newcommand{\sfB}{{\sf B}}

\newcommand{\sfP}{{\sf P}}

\newcommand{\sfR}{{\sf R}}

\newcommand{\Kliminf}{K\kern-3pt-\kern-2pt\mathop{\rm lim\,inf}\limits}  
\newcommand{\supp}{\mathop{\rm supp}\nolimits}   
\newcommand{\Lip}{\mathop{\rm Lip}\nolimits}          
\renewcommand{\d}{{\mathrm d}}
\newcommand{\dt}{{\d t}}
\newcommand{\ddt}{{\frac \d\dt}}

\newcommand{\restr}[1]{\lower3pt\hbox{$|_{#1}$}} 

\newcommand{\la}{\left<}                  
\newcommand{\ra}{\right>}
\newcommand{\eps}{\varepsilon}  
\newcommand{\nchi}{{\raise.3ex\hbox{$\chi$}}}
\newcommand{\weakto}{\rightharpoonup}

\setlength{\marginparwidth}{3cm}




\newcommand{\prob}[1]{\mathscr P(#1)}                   
\newcommand{\probt}[1]{\mathscr P_2(#1)}                   
\newcommand{\e}{{\rm{e}}}                           

\renewcommand{\mm}{\mathfrak m}                                



\renewenvironment{proof}{\removelastskip\par\medskip   
\noindent{\em Proof.} \rm}{\penalty-20\null\hfill$\square$\par\medbreak}

\newcommand{\testi}[1]{{\rm Test}^{\infty}(#1)}

\newcommand{\X}{{\rm X}}

\newcommand{\h}{{\sfh}}

\renewcommand{\ae}{{\textrm{\rm{-a.e.}}}}

\newcommand{\RCD}{{\sf RCD}}

\newcommand{\lip}{{\rm lip}}
\newcommand{\AC}{{\rm AC}}

\newcommand{\HS}{{\lower.3ex\hbox{\scriptsize{\sf HS}}}}

\newcommand{\Y}{{\rm Y}}
\newcommand{\hp}{{\sf p}}

\newcommand{\bs}{{\sf bs}}
\newcommand{\loc}{{\sf loc}}
\renewcommand{\B}{{\sf B}}

\setcounter{tocdepth}{3}

\title{Viscosity solutions of Hamilton--Jacobi equation in $\RCD(K,\infty)$ spaces and applications to large deviations}
\author{Nicola Gigli \thanks{SISSA, via Bonomea 265, 34136 Trieste, Italy. email: ngigli@sissa.it} \quad Luca Tamanini \thanks{Universit\`a Cattolica del Sacro Cuore, Dipartimento di Matematica e Fisica ``Niccol\`o Tartaglia'', via della Garzetta 48, 25133, Brescia, Italy. email: luca.tamanini@unicatt.it} \quad Dario Trevisan \thanks{Universit\`a di Pisa,  Dipartimento di Matematica, Largo Bruno Pontecorvo 5, 56127  Pisa Italy. email: dario.trevisan@unipi.it. Member of the GNAMPA INdAM group}}

\begin{document}

\maketitle

\begin{abstract}
\noindent The aim of this paper is twofold.
\begin{itemize} 
\item[-] In the setting of $\RCD(K,\infty)$ metric measure spaces, we derive uniform gradient and Laplacian contraction estimates along solutions of the viscous approximation of the Hamilton--Jacobi equation. We use these estimates to prove that, as the viscosity tends to zero, solutions of this equation converge to the evolution driven by the Hopf--Lax formula, in accordance with the smooth case.

\item[-] We then use such convergence to study the small-time Large Deviation Principle for both the heat kernel and the Brownian motion: we obtain the expected behavior under the additional assumption that the space is proper.
\end{itemize}

\noindent As an application of the latter point, we also discuss the $\Gamma$-convergence of the Schr\"odinger problem to the quadratic optimal transport problem in proper $\RCD(K,\infty)$ spaces.
\end{abstract}

\tableofcontents

\section{Introduction}

The heat kernel surely plays a crucial role in geometric and stochastic analysis and  understanding  its behavior is therefore of particular importance in applications and estimates. The celebrated Varadhan's formula \cite{Varadhan67} for the heat kernel $\hp_t(x,y)$, i.e.\ the fundamental solution of the heat equation $\partial_t\hp_t = \Delta_g\hp_t$ on a smooth Riemannian manifold, $\Delta_g$ being the Laplace-Beltrami operator, is a remarkable example in this sense, as it links analysis, probability and geometry. It states that, in the small-time regime, $\hp_t$ behaves in the following `Gaussian' way
\begin{equation}\label{eq:varadhan}
\lim_{t \downarrow 0} t\log\hp_t(x,y) = -\frac{\sfd^2_g(x,y)}{4}\,,
\end{equation}
where $\sfd_g$ is the Riemannian distance induced by $g$. The analytic/probabilistic information on the left-hand side is thus linked to the geometric one on the right-hand side. If one integrates the heat kernel over two positive-measure sets $U$ and $V$, then a Laplace-type argument allows to extend the pointwise validity of \eqref{eq:varadhan} to a setwise information, namely
\begin{equation}\label{eq:hino-ramirez}
\lim_{t \downarrow 0} t\log\sfP_t(U,V) = -\frac{\sfd^2(U,V)}{4}\,,
\end{equation}
where
\begin{equation}\label{eq:double-integral}
\sfP_t(U,V) := \int_U\int_V\hp_t(x,y)\,\d {\rm vol}(x)\d {\rm vol}(y)
\end{equation}
and $\sfd^2(U,V) := \inf\{\sfd^2_g(x,y) \,:\, x\in U,\,y \in V\}$. If either $U$ or $V$ shrinks down to a singleton, say $U=\{x\}$, then \eqref{eq:hino-ramirez} heuristically boils down to
\begin{equation}\label{eq:ldp-intro}
\lim_{t \downarrow 0} t\log\int_V \hp_t(x,y)\,\d {\rm vol}(y) = -\frac{\sfd^2(x,V)}{4}
\end{equation}
and in particular this strongly suggests that the measures $\mu_t[x] := \hp_t(x,y){\rm vol}(y)$ satisfy a \emph{Large Deviations Principle} (also called LDP for short) with speed $1/t$ and rate function $I(y) =\tfrac14 \sfd^2(x,y)$. Namely,
\[
\begin{split}
& \liminf_{t \downarrow 0} t\log\mu_t[x](O) \geq -\inf_{y \in O}\frac{\sfd^2(x,y)}{4} \qquad \forall O \subset \X \textrm{ open}, \\
& \limsup_{t \downarrow 0} t\log\mu_t[x](C) \leq -\inf_{y \in C}\frac{\sfd^2(x,y)}{4} \qquad \forall C \subset \X \textrm{ closed}.
\end{split}
\]
As the heat kernel $\hp_t(x,y)$ is the transition density of the Brownian motion, this couple of inequalities allows to quantify how rare the event is, that a diffusive particle starting at $x$ belongs to a certain set after a time $t$. For sure such an event is rare if $x \notin V$, since $\hp_t(x,\cdot){\rm vol} \rightharpoonup \delta_x$ as $t \downarrow 0$, but this is a sort of 0th order information coming from the law of large numbers. A Large Deviations Principle, if verified, is a much more accurate estimate, as it tells us that the decay is exponential (in the Euclidean setting, this would be ensured at a pointwise level by the Central Limit Theorem) and the description of the decay is made global via the rate function (because of this globality feature, LDP is more precise than Central Limit Theorem). 

And again, as the heat kernel $\hp_t(x,y)$ is the transition density of the Brownian motion, one may wonder whether the discussion carried out so far can be lifted to a higher level, namely if analogous estimates hold true for the law of Brownian motions with suitably vanishing quadratic variation. Focusing on \eqref{eq:ldp-intro} the answer is affirmative, as shown for instance in \cite{Azencott80, FreidlinWentzell98}. In order to guess what \eqref{eq:ldp-intro} should look like, little effort is needed for the left-hand side; a bit more is instead required to understand what should replace $\sfd^2(x,\cdot)$ as rate function. Heuristically though, the argument is very natural: in the same way the heat kernel is the building block of the Brownian motion (actually the density of the joint law at times 0 and $t$ of the Brownian path measure), so $\sfd^2(x,y)$ is the foundation of the kinetic energy, since for every absolutely continuous curve $\gamma : [0,1] \to \X$ it holds
\[
\int_0^1 |\dot{\gamma}_t|^2\,\d t = \sup\sum_{i=1}^n \frac{\sfd^2(\gamma_{t_i},\gamma_{t_{i-1}})}{t_i-t_{i-1}}\,,
\]
where the supremum runs over all finite partitions of $[0,1]$. Therefore, the lifting of \eqref{eq:ldp-intro} reads as
\begin{equation}
\label{eq:ldp-path}
\lim_{t \downarrow 0} t\log \B_t[x](\Gamma) = -\inf_{\gamma \in \Gamma \,:\, \gamma_0 = x}\frac{1}{4} \int_0^1 |\dot{\gamma}_t|^2\,\d t\,,
\end{equation}
where $\B_t[x]$ denotes the law on $C([0,1],\X)$ of the Brownian motion starting at $x$ with quadratic variation equal to $t$ and $\Gamma \subseteq C([0,1],\X)$.

\medskip

As long as $\X$ is a smooth Riemannian manifold with Ricci curvature bounded from below, it is not difficult to show that the three descriptions of the heat kernel small-time asymptotics \eqref{eq:varadhan} \eqref{eq:hino-ramirez} \eqref{eq:ldp-intro} presented above are equivalent (e.g.\ using the fact that, by Bishop--Gromov inequality, we know the behavior of the volume measure at small scales). However, when $\X$ is replaced by an infinite-dimensional (and possibly non-smooth) space, the relationship between the three is not clear at all. Actually, only some of them are known to hold and the same for \eqref{eq:ldp-path}. 

A rich literature has flourished around \eqref{eq:hino-ramirez}. In Hilbert spaces, hence still within a smooth framework, Zhang \cite{Zhang00} was the first to obtain a general and satisfactory result, until the breakthrough of Hino--Ram\'irez \cite{HinoRamirez03} and Ariyoshi--Hino \cite{AriyoshiHino05}, who discovered that the `Gaussian' behavior of $\hp_t$ is in fact a universal paradigm valid on general strongly local symmetric Dirichlet spaces if rephrased in the integrated form \eqref{eq:hino-ramirez} (for an appropriate set-distance associated with the given Dirichlet form and for transition measures rather than kernels, whose existence is not always granted). In this direction, it is worth mentioning the recent papers \cite{DelloSchiavoSuzuki20, DelloSchiavoSuzuki21, DelloSchiavoSuzuki21b} which refine (1.2) in the settings considered by the authors. As concerns the pointwise version \eqref{eq:varadhan}, instead, it has essentially been investigated only in finite-dimensional, locally compact contexts so far, as for instance Lipschitz \cite{Norris97} and sub-Riemannian manifolds \cite{Leandre87, Leandre87b}; it can also be obtained as byproduct of Gaussian heat kernel estimates in the case of finite-dimensional $\RCD$ spaces \cite{JLZ15} and locally compact Dirichlet spaces supporting local Poincar\'e and doubling \cite{Sturm96I, Sturm96II}. 

The current knowledge around \eqref{eq:ldp-intro} outside the Riemannian realm is closer to \eqref{eq:varadhan} rather than \eqref{eq:hino-ramirez}, in the sense that --except for the case of Gaussian spaces-- to the best of our knowledge there  are no results in genuinely infinite-dimensional spaces, i.e.\ without local compactness and local doubling. 

\medskip

\noindent\textbf{Main contributions}. In this paper we make an effort toward the understanding of LDP for heat kernels and Brownian motions on $\RCD(K,\infty)$ spaces. The main motivation comes from the central role played by the heat semigroup in conjunction with curvature bounds. This connection, already highlighted in \cite{GigliKuwadaOhta13} in the setting of Alexandrov spaces, is even stronger when it comes to lower Ricci bounds. Indeed, the heat flow is at the very heart of the definition of $\RCD$ spaces \cite{AmbrosioGigliSavare11-2, Gigli12} and is intimately linked to the geometry of such spaces, as it appears clearly e.g.\ in Bakry--\'Emery gradient estimate. The study of LDP for heat kernels and Brownian motions within the $\RCD$ setting thus appears a rather natural question to be addressed.

In what follows, for $x \in \X$ and  $t>0$ we set $\mu_t[x] := \hp_t(x,\cdot)\mm$.  Our first main result can be roughly stated as follows (see Proposition \ref{pro:from-hjb-to-hopflax} for the rigorous statement):
\begin{Theorem}\label{thm:1}
Let $(\X,\sfd,\mm)$ be an $\RCD(K,\infty)$ space with $K \in \R$. Then for every $\varphi:\X\to\R$  `smooth enough' we have
\begin{equation}
\label{eq:visc}
\lim_{\eps \downarrow 0}\eps\log\Big(\int e^{\varphi/\eps}\,\d\mu_\eps[x]\Big)= \sup_{y \in \X}\Big\{\varphi(y) - \frac{\sfd^2(x,y)}{4}\Big\}\qquad\forall  x\in\X.
\end{equation}
\end{Theorem}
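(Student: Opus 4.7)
My plan is to use the classical Hopf-Cole transformation to reduce \eqref{eq:visc} to the convergence theorem for the viscous Hamilton-Jacobi equation announced in Proposition \ref{pro:from-hjb-to-hopflax}. The key observation is that, after taking logarithms, the heat semigroup applied to $e^{\varphi/\eps}$ solves a viscous HJ equation with vanishing viscosity parameter $\eps$, whose inviscid limit is precisely the evolution driven by the Hopf-Lax formula producing the right-hand side of \eqref{eq:visc}.

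Concretely, I would first set $v_\eps(t, x) := \heatl_t(e^{\varphi/\eps})(x)$, so that $\partial_t v_\eps = \Delta v_\eps$ with $v_\eps(0, \cdot) = e^{\varphi/\eps}$, and then introduce the rescaled logarithm
\[
u_\eps(t, x) \;:=\; \eps \log v_\eps(\eps t, x).
\]
The function $u_\eps$ satisfies $u_\eps(0, \cdot) = \varphi$ and, via a direct chain-rule computation combining the heat equation with the identity $\Delta \log v = \Delta v/v - |\nabla v|^2/v^2$,
\[
\partial_t u_\eps \;=\; \eps\, \Delta u_\eps \;+\; |\nabla u_\eps|^2.
\]
Crucially, $u_\eps(1, x) = \eps \log \int e^{\varphi/\eps}\,\d\mu_\eps[x]$ is exactly the left-hand side of \eqref{eq:visc}. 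I would then apply Proposition \ref{pro:from-hjb-to-hopflax} to the family $\{u_\eps\}_{\eps > 0}$ to obtain convergence of $u_\eps(1,\cdot)$, as $\eps \downarrow 0$, to the time-$1$ value of the Hopf-Lax evolution of $\varphi$. Legendre duality for the Hamiltonian $H(p) = |p|^2$ identifies this limit with $\sup_{y \in \X}\{\varphi(y) - \sfd^2(x,y)/4\}$, yielding \eqref{eq:visc}.

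The main obstacle is technical and lies in making the Hopf-Cole transformation fully rigorous in the $\RCD(K,\infty)$ framework rather than in the conceptual chain of the argument. One has to (i) identify the precise class of ``smooth enough'' $\varphi$ for which $e^{\varphi/\eps}$ is an admissible initial datum for the heat flow, some form of boundedness or a suitable Lipschitz/growth assumption should suffice; (ii) exploit the strict positivity of the heat semigroup on nonnegative nontrivial data so that $\log v_\eps$ is well defined and regular enough to differentiate; (iii) verify that $u_\eps$ lies in the function class to which Proposition \ref{pro:from-hjb-to-hopflax} applies, a point in which the uniform gradient and Laplacian contraction estimates announced as the paper's first contribution enter decisively. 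Once these ingredients are in place, the chain rule above is justified by the calculus of $\RCD(K,\infty)$ spaces and the reduction to Proposition \ref{pro:from-hjb-to-hopflax} becomes essentially mechanical.
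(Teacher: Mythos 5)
Your proposal is correct and is essentially the paper's own route: Theorem \ref{thm:1} is exactly Proposition \ref{pro:from-hjb-to-hopflax} specialized to the time parameter for which $\h_{\eps t/2}=\h_\eps$ (note the paper's normalization $\varphi^\eps_t=\eps\log\h_{\eps t/2}(e^{\varphi/\eps})$ evaluated at $t=2$, versus your $\h_{\eps t}$ at $t=1$ --- a harmless reparametrization), with ``smooth enough'' meaning $\varphi\in\testi\X$. Your technical checklist (i)--(iii) is precisely what the paper carries out in Lemma \ref{le:stimebase} and Corollary \ref{cor:easy} (positivity via $e^\varphi\geq e^{\inf\varphi}>0$, the chain-rule identities for $\log\h_t e^\varphi$, and the uniform gradient/Laplacian bounds), so the reduction is sound.
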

Here `smooth enough' is interpreted in the sense of so-called \emph{test functions} $\testi\X$, see \cite{Savare13}, \cite{Gigli14}: for the purpose of this introduction it is only important to keep in mind that this is a rather large space of functions, meaning that it is dense in many relevant functional spaces.

By the monotonicity in $\varphi$ of both sides of \eqref{eq:visc} it is easy to see that the same formula holds for the larger class of functions $\psi:\X\to\R$ such that
\begin{equation}
\label{eq:classe}
\psi=\sup_{\substack{\varphi\in\testi\X \\ \varphi\leq \psi}}\varphi=\inf_{\substack{\varphi\in\testi\X \\ \varphi\geq \psi}}\varphi\,.
\end{equation}
If $\X$ is also proper, then it is known that the class of test functions contains many cut-off functions (see \cite[Lemma 6.7]{AmbrosioMondinoSavare13-2}), so that the class of $\psi$'s for which \eqref{eq:classe} holds coincides with $C_b(\X)$ (see Proposition \ref{lem:ldp-lipschitz}). Also, if $\X$ is proper, then for every $x\in\X$ the function $y\mapsto \frac{\sfd^2(x,y)}{4}$ has compact sublevels (in other words, $\X$ is locally compact; in fact this is a characterization of proper spaces, see \cite[Theorem 2.5.28]{BBI01}) and it is well known that in this case the validity of \eqref{eq:visc} for every $\varphi\in C_b(\X)$ is equivalent to a LDP, see Theorems \ref{thm:lower-ldp} and \ref{thm:upper-ldp} below, so we obtain:

\begin{Theorem}[Theorem \ref{thm:ldp-heatkernel}]\label{thm:main1}
Let $(\X,\sfd,\mm)$ be a proper $\RCD(K,\infty)$ space with $K \in \R$. Then the family $(\mu_t[x])$ satisfies a Large Deviations Principle with speed $t^{-1}$ and rate
\[
I(z) := \frac{\sfd^2(x,z)}{4}\,.
\]
\end{Theorem}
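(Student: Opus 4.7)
The plan is to view Theorem \ref{thm:main1} as a soft consequence of Theorem \ref{thm:1} (i.e.\ Proposition \ref{pro:from-hjb-to-hopflax}): once the Varadhan-type limit \eqref{eq:visc} is extended to all bounded continuous functions, the LDP follows from the standard equivalence between the Laplace principle and the LDP on Polish spaces. The substantive work is thus reduced to the extension step, and this is precisely where the properness assumption enters.

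First I would upgrade \eqref{eq:visc} from test functions $\varphi\in\testi\X$ to an arbitrary $\psi\in C_b(\X)$. Both sides of \eqref{eq:visc} are monotone nondecreasing in the function entering the formula: if $\psi$ is sandwiched as in \eqref{eq:classe} by two sequences $\{\varphi_n\},\{\tilde\varphi_n\}\subset\testi\X$ with $\varphi_n\uparrow\psi$ and $\tilde\varphi_n\downarrow\psi$ pointwise, then
\[
\begin{aligned}
\sup_{y\in\X}\Big(\varphi_n(y)-\tfrac{\sfd^2(x,y)}{4}\Big) &\leq \liminf_{\eps\downarrow 0}\eps\log\int e^{\psi/\eps}\,\d\mu_\eps[x]\\
&\leq \limsup_{\eps\downarrow 0}\eps\log\int e^{\psi/\eps}\,\d\mu_\eps[x] \leq \sup_{y\in\X}\Big(\tilde\varphi_n(y)-\tfrac{\sfd^2(x,y)}{4}\Big),
\end{aligned}
\]
and letting $n\to\infty$ (the outer suprema converge by monotonicity) yields \eqref{eq:visc} for $\psi$. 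By Proposition \ref{lem:ldp-lipschitz}, the class of such $\psi$'s coincides with $C_b(\X)$ under the properness assumption, the key input being that $\testi\X$ then contains enough cut-off functions (cf.\ \cite[Lemma 6.7]{AmbrosioMondinoSavare13-2}). Thus \eqref{eq:visc} is available for every $\varphi\in C_b(\X)$.

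Next I would verify that $I(z):=\sfd^2(x,z)/4$ is a good rate function. Lower semicontinuity is immediate, while the sublevel $\{I\leq c\}$ coincides with the closed $\sfd$-ball of radius $2\sqrt{c}$ around $x$, which is compact by properness of $(\X,\sfd)$. Since any proper metric space is Polish, the classical equivalence between the Laplace principle and the LDP applies (Varadhan's lemma in one direction, Bryc's converse in the other): the validity of \eqref{eq:visc} for every $\varphi\in C_b(\X)$, combined with goodness of $I$, is equivalent to the LDP for $(\mu_\eps[x])$ with speed $\eps^{-1}$ and rate $I$. This is the desired conclusion.

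The substantive content has already been absorbed into Theorem \ref{thm:1} and into the identification of the class \eqref{eq:classe} with $C_b(\X)$ contained in Proposition \ref{lem:ldp-lipschitz}; once these are in place the theorem follows by an essentially formal LDP-theoretic argument. The role of properness is twofold: it makes the rate function good (compactness of balls) and it makes the class \eqref{eq:classe} rich enough to cover all of $C_b(\X)$. Without properness either ingredient may fail, which is why the conclusion at the non-proper level has to be formulated in the weaker form of Theorem \ref{thm:1}; the hard part, therefore, is not the present deduction but the Laplace identity \eqref{eq:visc}, which is handled upstream via the viscous Hopf--Lax analysis.
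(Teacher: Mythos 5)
Your proposal is correct and follows essentially the same route as the paper: extend the Laplace asymptotics \eqref{eq:visc} from $\testi\X$ to $C_b(\X)$ via the cut-off/monotonicity argument of Proposition \ref{lem:ldp-lipschitz}, observe that properness makes the sublevels of $I$ compact, and then invoke the Varadhan--Bryc equivalence between the Laplace principle and the LDP (this is exactly the content of items $(i)\Leftrightarrow(ii)$ of Theorems \ref{thm:lower-ldp} and \ref{thm:upper-ldp} used in the paper). The only cosmetic caveat is that for the upper bound the convergence of $\sup_y(\tilde\varphi_n(y)-I(y))$ as $\tilde\varphi_n\downarrow\psi$ is not purely formal monotonicity (sup and inf do not commute); but since you delegate that identification to Proposition \ref{lem:ldp-lipschitz}, as the paper itself does, the argument stands.
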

It is certainly natural to wonder whether the same result holds without assuming the space to be proper (in this case LDP should be interpreted in the so-called `weak' sense). We do not know: The main problem is the absence of a suitable localization procedure that allows to deduce that the class of $\psi$'s for which \eqref{eq:classe} holds coincides with $C_b(\X)$.

\bigskip

As the heat kernel is the building block of Brownian motion, this first result will be used as a lever for another problem: the investigation of the small-time behavior of the Brownian motion. In this case the framework has infinite dimension by its very nature, since we deal no longer with measures on $\X$ but on the path space $C([0,1],\X)$. The second goal of the paper is thus to obtain a Large Deviations Principle for the laws of a family of Brownian motions whose quadratic variation is vanishing in a sense that we shall soon make precise. In view of the previous discussion, the rate function appearing below is not surprising.

\begin{Theorem}[Theorem \ref{thm:brownian-upper}]\label{thm:main2}
Let $(\X,\sfd,\mm)$ be a proper $\RCD(K,\infty)$ space with $K \in \R$, $x \in \X$ and $\B \in \prob{C([0,1],\X)}$ be the law of the Brownian motion starting at $x$. Define, for $t \in (0,1]$ and $s \in [0,1]$,
\[
\B_t := ({\rm restr}_0^t)_*\B \qquad \textrm{with} \quad ({\rm restr}_0^t)(\gamma)(s) := \gamma_{st},\, \gamma \in C([0,1],\X)\,.
\]
Then the family $(\B_t)$ satisfies a Large Deviations Principle with speed $t^{-1}$ and rate function $I : C([0,1],\X) \to [0,\infty]$ given by
\[
I(\gamma) := \left\{\begin{array}{ll}
\displaystyle{\frac{1}{4}\int_0^1 |\dot{\gamma}_t|^2\,\d t} & \qquad \text{ if } \gamma \in \AC^2([0,1],\X),\,\gamma_0 = x, \\
+\infty & \qquad \text{ otherwise.}
\end{array}\right.
\]
\end{Theorem}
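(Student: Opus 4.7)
My plan is to follow the classical scheme for deriving a path-space LDP from marginal LDPs: first prove an LDP for the finite-dimensional time-marginals, then use the Dawson-G\"artner projective limit theorem to obtain an LDP on $\X^{[0,1]}$ endowed with the product topology, and finally upgrade to $C([0,1],\X)$ with the uniform topology by establishing exponential tightness. Properness of $(\X,\sfd,\mm)$ will enter twice: it underlies the hypotheses of Theorem \ref{thm:main1} and it ensures that sublevels of $y\mapsto\sfd^2(x,y)$ are compact, so that candidate compacta in $C([0,1],\X)$ can actually be built via Ascoli-Arzel\`a.

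For the first step, for any partition $0 = t_0 < t_1 < \ldots < t_n = 1$, the image of $\mathbf{B}_t$ under $\gamma\mapsto(\gamma_{t_1},\ldots,\gamma_{t_n})$ has density
\[
\hp_{tt_1}(x,y_1)\,\hp_{t(t_2-t_1)}(y_1,y_2)\cdots \hp_{t(t_n-t_{n-1})}(y_{n-1},y_n)
\]
with respect to $\mm^{\otimes n}$. Iterating Theorem \ref{thm:main1} --- equivalently, applying Varadhan's integral lemma once per coordinate, the output of each integration providing the bounded continuous function used at the next step --- yields an LDP for this family with rate
\[
I_{t_1,\ldots,t_n}(y_1,\ldots,y_n)=\frac{1}{4}\sum_{i=1}^n \frac{\sfd^2(y_{i-1},y_i)}{t_i-t_{i-1}},\qquad y_0:=x.
\]
The Dawson-G\"artner theorem then promotes this to an LDP on $\X^{[0,1]}$ with the product topology, with rate function given by the supremum of such sums over all finite partitions. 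By the variational characterization of kinetic energy recalled in the introduction, this supremum equals $\frac{1}{4}\int_0^1 |\dot\gamma_t|^2\,\d t$ when $\gamma\in AC^2([0,1],\X)$ with $\gamma_0=x$, and $+\infty$ otherwise, identifying the proposed rate function $I$.

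To transfer the LDP from the product topology on $\X^{[0,1]}$ to the uniform topology on $C([0,1],\X)$, it suffices to exhibit, for every $N>0$, a compact set $K_N\subset C([0,1],\X)$ with $\limsup_{t\downarrow 0} t\log\mathbf{B}_t(K_N^c)\leq -N$. By Ascoli-Arzel\`a this reduces to controlling (i) the range of the path, using properness together with a tail estimate $\mathbf{B}_t(\sfd(x,\gamma_s)>R)\leq\exp(-cR^2/t)$ that follows from Theorem \ref{thm:main1} applied to the complement of a large ball, and (ii) the modulus of continuity of the path, for which one needs two-time exponential increment bounds such as
\[
\mathbf{B}_t\bigl(\sfd(\gamma_r,\gamma_s)>\eta\bigr)\leq\exp\!\Big(-\frac{c\eta^2}{t(s-r)}\Big).
\]

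The main obstacle, which I expect to require the most effort, is precisely this modulus-of-continuity estimate: in an infinite-dimensional $\RCD$ setting one cannot rely on finite-dimensional volume comparison or Gaussian heat kernel upper bounds. The natural route is to exploit the Markov property together with the Bakry-\'Emery gradient contraction of the heat semigroup and Theorem \ref{thm:main1} applied at each pair of consecutive times to obtain Gaussian-type concentration for the transition probabilities; combined with a Kolmogorov-type exponential criterion this should deliver the required increment bounds. Stochastic completeness of the heat semigroup on $\RCD(K,\infty)$ spaces ensures that $\mathbf{B}_t$ is supported in $C([0,1],\X)$, so that the upgrade is meaningful.
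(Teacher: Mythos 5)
Your route (finite-dimensional marginal LDPs, Dawson--G\"artner, then exponential tightness in the uniform topology) is the classical Dembo--Zeitouni scheme and is genuinely different from the paper's, which never proves exponential tightness: the paper instead uses Mariani's equivalence between the LDP and $\Gamma$-convergence of the rescaled entropies $tH(\cdot\,|\,\B_t)$, verifies the $\Gamma$-$\liminf$/$\limsup$ only at Dirac masses $\delta_{\bar\gamma}$ by conditioning $\B_t$ on finite-time tubes $U_r=\cap_i\{\gamma:\gamma_{t_i}\in B_r(\bar\gamma_{t_i})\}$, and builds the recovery sequence from the conditioned Brownian motion itself via a diagonal argument. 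However, your proposal has two genuine gaps.

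First, the iteration producing the marginal LDP is not a direct application of Theorem \ref{thm:main1}. That theorem (equivalently, the convergence $\varphi_t^\eps(x)\to -Q_t(-\varphi)(x)$) is \emph{pointwise} in the starting point $x$, whereas ``applying Varadhan's integral lemma once per coordinate'' requires the inner integral $y_1\mapsto t\log\int e^{\varphi/t}\hp_{t(t_2-t_1)}(y_1,y_2)\,\d\mm(y_2)$ to converge locally uniformly in $y_1$, since the function fed into the next integration depends on $t$. This is fixable --- the uniform Lipschitz bound \eqref{eq:lipcontr-exp} on $\varphi_t^\eps$ upgrades pointwise to locally uniform convergence, and this is precisely what the paper's Lemma \ref{pro:ldp-superposition} extracts (LD bounds for $\int\mu_t[x](\cdot)\,\d\sigma_t(x)$ with $\sigma_t$ supported in a compact set) --- but as written the step is not justified.

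Second, and more seriously, the exponential tightness step is not proved, and the tool you invoke cannot deliver it. The two-time increment bound $\B_t(\sfd(\gamma_r,\gamma_s)>\eta)\leq\exp(-c\eta^2/(t(s-r)))$ needed for a Kolmogorov-type exponential modulus-of-continuity criterion must be \emph{non-asymptotic} (valid for all small time increments simultaneously, with constants uniform over the starting point ranging in all of $\X$, or at least over the relevant high-probability region). Theorem \ref{thm:main1} is an asymptotic statement as $t\downarrow0$ for each \emph{fixed} starting point and each fixed closed set, so ``applying it at each pair of consecutive times'' gives no uniform quantitative control. What is actually required here is a genuine integrated Gaussian upper bound for the heat semigroup (e.g.\ of Davies--Gaffney type, $\h_s(\mathbf 1_{B_R(x)^c})(x)\leq C e^{-R^2/(Cs)}$) together with a maximal inequality to pass from two-point estimates to the supremum over the dyadic chaining; none of this is supplied by the results you are allowed to cite, and it constitutes a substantive additional piece of work. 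This is exactly the difficulty the paper's entropy/$\Gamma$-convergence strategy is designed to sidestep: compactness is only ever needed for the sublevels of the rate function (which follows from properness and the H\"older bound on finite-energy curves), not for the measures $\B_t$ themselves.
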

We shall derive Theorem \ref{thm:main2} from Theorem \ref{thm:main1} by relying on another very general equivalent statement of the LDP provided in \cite{Mariani18}: the family of measures $(\mu_t)$ satisfies a LDP with speed $t^{-1}$ and rate function $I$ if and only if the rescaled relative entropies $t\,H(\cdot\,|\,\mu_t)$ $\Gamma$-converge to the functional $\nu\mapsto\int I\,\d\nu$, where here $\Gamma$-convergence is intended w.r.t.\ the weak topology of $\prob{C(0,1],\X)}$. Although relying on this perspective is not strictly necessary, we believe that this approach clarifies some steps of the proofs.

In the last section of the paper we show how our results can be used in conjunction with those in \cite{{Leonard12}} to prove a suitable $\Gamma$-convergence result of the Schr\"odinger problem to the Optimal Transport problem.

\bigskip

As it is perhaps clear from this discussion, once we got Theorem \ref{thm:1} the rest will follow by  quite standard machinery related to the theory of Large Deviations, and in particular has little to do with lower Ricci curvature bounds. Thus let us conclude this introduction by focusing on the main ideas that lead to Theorem \ref{thm:1}. Introducing the Cole--Hopf and Hopf--Lax semigroups
\[
\begin{split}
Q^\eps_t(\varphi)(x) & := \eps\log\big(\h_{\eps t}e^{\varphi/\eps}\big)(x)=\eps\log\Big(\int e^{\varphi/\eps}\,\d\mu_{\eps t}[x]\Big)\,,\\
Q_t(\varphi)(x) & := \inf_{y\in\X}\Big\{\frac{\sfd^2(x,y)}{4t} + \varphi(y)\Big\}\,,
\end{split}
\]
the limiting equation \eqref{eq:visc} can be equivalently formulated as 
\[
Q^\eps_t(\varphi) \to -Q_t(-\varphi) \qquad \text{pointwise as $\eps\downarrow0$ for t=1}
\]
(the case of general $t>0$ follows by simple scaling). From a PDE perspective this amounts to proving that solutions of
\begin{equation}
\label{eq:hjbi}
\partial_t\varphi^\eps_t=|\nabla\varphi^\eps_t|^2+\eps\Delta\varphi^\eps_t
\end{equation}
converge to solutions of
\begin{equation}
\label{eq:hji}
\partial_t\varphi_t=|\nabla\varphi_t|^2\,.
\end{equation}
Note that, in line with the considerations made initially,  equation \eqref{eq:hjbi} is tightly linked --via the Laplacian-- to analysis on the base space, whereas \eqref{eq:hji} to geometry (as it is clear from the representation formula of its solutions given by the Hopf--Lax formula recalled above). We remark that the viscous-metric aspects of the Hopf--Lax formula in connection with Hamilton--Jacobi equations has been carefully studied in the independent papers \cite{AF14} and \cite{GS15}.

The validity of this sort of viscous approximation of the Hamilton--Jacobi equation in the setting of $\RCD$ spaces has been first pointed out in \cite{GigTam21}, where the setting was that of finite-dimensional $\RCD(K,N)$ spaces. Here, by following the same strategy, we push these results to the very general setting of $\RCD(K,\infty)$,  in particular not relying on any sort of local compactness.

Proving such convergence requires, among other things, to establish suitable estimates for solutions of \eqref{eq:hjbi} which are uniform in $\eps$.  Section \ref{sec:contraction} is devoted to the derivation of such estimates, which can be stated as:
\begin{equation}
\label{eq:estintro}
\begin{split}
 \Lip(\varphi_t^\eps) &\leq e^{-K\eps t}\Lip(\varphi)\,,\\
(\Delta\varphi_t^\eps)^-& \leq \|(\Delta\varphi)^-\|_{L^\infty(\X)} + 2K^-te^{2K^-\eps t}\Lip(\varphi)^2\,,
\end{split}
\end{equation}
for every $t\geq 0$. See Corollary \ref{cor:easy} for the rigorous statement (paying attention to the factor $\frac12$ used in formula \eqref{eq:hjbsemigroup}). 

In the smooth literature, such inequalities are known to be useful tools in the study of non-linear PDEs, see e.g.\ \cite[Proposition 2.1, Corollary 9.1]{lions1982generalized} where uniform bounds are established for gradients and Laplacians of solutions to Hamilton--Jacobi equations on Euclidean domains, with a vanishing viscosity term and a general convex Hamiltonian. Still in the smooth setting, the fact that lower Ricci bounds play a role in  \eqref{eq:estintro} is known at least since  \cite[Lemma 3.2]{AmStTre}.

To the best of our knowledge, little is known about these bounds in the non-smooth setting of $\RCD$ spaces and the only relevant reference is the recent \cite{MondinoSemola21}, where (an estimate close in spirit to) the Laplacian bound in \eqref{eq:estintro} has been derived in the limit case $\eps=0$ and in the setting of finite-dimensional and non-collapsed $\RCD$ spaces.



\section{Preliminaries}\label{sec:preliminaries}

\subsection{Sobolev calculus on \texorpdfstring{$\RCD$}{RCD} spaces}

By $C([0,1],\X)$ we denote the space of continuous curves with values in the metric space $(\X,\sfd)$ and for $t \in [0,1]$ the evaluation map $\e_t : C([0,1],\X) \to \X$ is defined as $\e_t(\gamma) := \gamma_t$. For the notion of absolutely continuous curve in a metric space and of metric speed see for instance Section 1.1 in \cite{AmbrosioGigliSavare08}. The collection of absolutely continuous curves on $[0,1]$ is denoted by $\AC([0,1],\X)$. By $\prob\X$ we denote the space of Borel probability measures on $(\X,\sfd)$ and by $\probt\X \subset \prob\X$ the subclass of those with finite second moment.

\medskip

Let $(\X,\sfd,\mm)$ be a complete and separable metric measure space endowed with a Borel non-negative measure which is finite on bounded sets.  We shall also assume that 
\begin{center}
    the support of $\mm$ coincides with the whole $\X$.
\end{center}
For the definition of {\bf test plans}, the {\bf Sobolev class} $S^2(\X)$ and of {\bf minimal weak upper gradient} $|D f|$ see \cite{AmbrosioGigliSavare11} (and the previous works \cite{Cheeger00, Shanmugalingam00} for alternative --but equivalent-- definitions of Sobolev functions). The Sobolev space $W^{1,2}(\X)$ is defined as $L^2(\X)\cap S^2(\X)$. When endowed with the norm $\|f\|_{W^{1,2}}^2:=\|f\|_{L^2}^2+\||Df|\|_{L^2}^2$, the space $W^{1,2}(\X)$ is a Banach space. 

The {\bf Cheeger energy} is the convex and lower-semicontinuous functional $E:L^2(\X)\to[0,\infty]$ given by
\[
E(f):=\left\{\begin{array}{ll}
\displaystyle{\frac12\int|D f|^2\,\d\mm}&\qquad \text{for }f\in W^{1,2}(\X)\,,\\
+\infty&\qquad\text{otherwise}.
\end{array}\right.
\]
$(\X,\sfd,\mm)$ is {\bf infinitesimally Hilbertian} (see \cite{Gigli12}) if $W^{1,2}(\X)$ is Hilbert. In this case the {\bf cotangent module} $L^2(T^*\X)$ (see \cite{Gigli14}) and its dual, the {\bf tangent module} $L^2(T\X)$, are canonically isomorphic, the {\bf differential} is a well-defined linear map $\d$ from $S^2(\X)$ with values in $L^2(T^*\X)$ and the isomorphism sends the differential $\d f$ to the gradient $\nabla f$. In this framework, the {\bf divergence} of a vector field is defined as (minus) the adjoint of the differential, i.e.\ we say that $v \in L^2(T\X)$ has a divergence in $L^2(\X)$, and write $v \in D({\rm div})$, provided there is a function $g \in L^2(\X)$ such that
\[
\int fg\,\d\mm = -\int \d f(v)\,\d\mm \qquad \forall f \in W^{1,2}(\X)\,.
\]
In this case $g$ is unique and is denoted ${\rm div}(v)$.

Furthermore $E$ is a Dirichlet form admitting a \emph{carr\'e du champ} given by $\langle\nabla f,\nabla g\rangle$, where $\langle\cdot,\cdot\rangle$ is the pointwise scalar product on the Hilbert module $L^2(T\X)$. The infinitesimal generator $\Delta$ of $E$, which is a linear closed self-adjoint operator on $L^2(\X)$, is called {\bf Laplacian} on $(\X,\sfd,\mm)$ and its domain denoted by $D(\Delta)\subset W^{1,2}(\X)$. A function $f \in W^{1,2}(\X)$ belongs to $D(\Delta)$ and $g = \Delta f$ if and only if
\[
\int\phi g\,\d\mm = -\int\langle\nabla\phi,\nabla f\rangle\,\d\mm, \qquad \forall \phi \in W^{1,2}(\X)\,.
\]
The flow $(\h_t)$ associated to $E$ is called {\bf heat flow} (see \cite{AmbrosioGigliSavare11}), for such a flow it holds
\[
u \in L^2(\X) \qquad \Rightarrow \qquad (\h_t u) \in C([0,\infty),L^2(\X)) \cap \AC_{\loc}((0,\infty),W^{1,2}(\X))
\]
and the curve $t\mapsto\h_tu\in L^2(\X)$ is the only solution of
\begin{equation}
\label{eq:calorel2}
\ddt\h_tu = \Delta\h_tu, \qquad \h_tu \to u\text{ as }t \downarrow 0\,,
\end{equation}
where it is intended that $(\h_tu)$ is in $\AC_{\loc}((0,\infty),L^2(\X))$ and that the derivative is intended in $L^2(\X)$. We shall denote by $\Lip(\X)$ (resp.\ $\Lip_\bs(\X)$, resp.\ $\Lip_{\bs,+}(\X)$) the space of real-valued Lipschitz (resp.\  Lipschitz  with bounded support, resp.\ Lipschitz  with bounded support and non-negative) functions on $\X$. On infinitesimally Hilbertian spaces, $\Lip_{\bs,+}(\X)$ is $W^{1,2}$-dense in the space of non-negative functions in $W^{1,2}(\X)$ and then clearly $\Lip_{\bs}(\X)$ is dense in $W^{1,2}(\X)$.

If moreover $(\X,\sfd,\mm)$ is an $\RCD(K,\infty)$ space (see \cite{AmbrosioGigliSavare11-2}), then there exists the {\bf heat kernel}, namely a function 
\[
(0,\infty)\times \X^2 \ni (t,x,y)\quad\mapsto\quad \hp_t[x](y)=\hp_t[y](x)\in (0,\infty)
\]
such that
\begin{equation}
\label{eq:rapprform}
\h_tu(x) = \int u(y)\hp_t[x](y)\,\d\mm(y)\qquad\forall t>0
\end{equation}
for every $u\in L^2(\X)$. For every $x\in \X$ and $t>0$, $\hp_t[x]$ is a probability density and thus \eqref{eq:rapprform} can be used to extend the heat flow to $L^1(\X)$ and shows that the flow is {\bf mass preserving} and satisfies the {\bf maximum principle}, i.e.
\[
f\leq c\quad\mm\ae \qquad\qquad\Rightarrow \qquad\qquad\h_tf\leq c\quad\mm\ae,\ \forall t>0\,.
\]
An important property of the heat flow on $\RCD(K,\infty)$ spaces is the {\bf Bakry--\'Emery contraction estimate} (see \cite{AmbrosioGigliSavare11-2}):
\begin{equation}
\label{eq:be}
|\d\h_tf|^2\leq e^{-2Kt}\h_t(|\d f|^2)\qquad\forall f\in W^{1,2}(\X),\ t\geq 0\,.
\end{equation}
Furthermore (see \cite{AmbrosioGigliSavare11-2}) if $u \in L^\infty(\X)$, then $\h_\cdot u(\cdot)$ belongs to $C_b((0,\infty) \times\X)$, $\h_t u$ is Lipschitz on $\X$ for all $t>0$ with Lipschitz constant controlled by
\[
\sqrt{2\int_0^t e^{2Ks}\d s}\Lip(\h_t u) \leq \|u\|_{L^\infty(\X)}
\]
and it also holds
\begin{equation}\label{eq:C0reg}
u \in C_b(\X) \qquad \Rightarrow \qquad (t,x) \mapsto \h_t u(x) \in C_b([0,\infty) \times \X)\,.
\end{equation}
We also recall that $\RCD(K,\infty)$ spaces have the {\bf Sobolev-to-Lipschitz} property (\cite{AmbrosioGigliSavare11-2, Gigli13}), i.e.
\[
f\in W^{1,2}(\X),\ |\d f|\in L^\infty(\X)\qquad\Rightarrow\qquad \exists \tilde f=f\ \mm\textrm{-a.e. with }\Lip(\tilde f)\leq\||\d f|\|_{L^\infty(\X)}\,,
\]
and thus we shall typically identify Sobolev functions with bounded differentials with their Lipschitz representative.

\bigskip

On $\RCD(K,\infty)$ spaces, the vector space of `test functions' (see \cite{Savare13})
\[
\testi\X := \Big\{ f \in D(\Delta) \cap L^{\infty}({\X}) \ :\ |\nabla f| \in L^{\infty}(\X),\ \Delta f \in L^{\infty}\cap W^{1,2}(\X) \Big\}
\]
is an algebra dense in $W^{1,2}(\X)$ for which it holds
\[
\text{$u \in \testi\X$   and $\Phi\in C^\infty(\R)$ with $\Phi(0) = 0$}\qquad\Rightarrow\qquad\text{$\Phi\circ u \in \testi\X$}\,. 
\]
An explicit method to build test functions consists in a combination of truncation and mollification via heat flow, see \cite[Proposition 6.1.8]{GigPas20}. The fact that $\testi\X$ is an algebra is based on the property
\[
\begin{split}
f\in\testi\X \qquad \Rightarrow \qquad &|\d f|^2\in W^{1,2}(\X)\quad\text{ with }\\
&\int|\d(|\d f|^2)|^2\,\d\mm\leq \||\d f|\|^2_{L^\infty}\Big(\||\d f|\|_{L^2}\||\d \Delta f|\|_{L^2}+|K|\||\d f|\|_{L^2}^2\Big)\,,
\end{split}
\]
which allows to show that the {\bf Bochner inequality} (a stronger formulation can be found in \cite{Gigli14} - see also the previous contributions \cite{Savare13, Sturm14}) holds for all $f \in \testi\X$ in the following form:
\begin{equation}
\label{eq:bochhess}
-\int \Big(\frac{1}{2}\langle\nabla h,\nabla |\nabla f|^2\rangle + h\langle\nabla f,\nabla\Delta f\rangle\Big)\d\mm \geq K\int h|\d f|^2\d\mm \qquad \forall h \in L^1 \cap W^{1,2}(\X)\,.
\end{equation}

\subsection{Optimal transport and Regular Lagrangian Flows on \texorpdfstring{$\RCD$}{RCD} spaces}

We first recall the following result on the existence of $W_2$-geodesics in $\RCD$ spaces (see \cite{Gigli12a} and \cite{RajalaSturm12}).

\begin{Theorem}\label{thm:bm}
Let $(\X,\sfd,\mm)$ be an $\RCD(K,\infty)$ space and $\mu_0,\mu_1 \in \probt\X$ with bounded support and such that $\mu_0,\mu_1 \leq C\mm$ for some $C>0$. 

Then there exists a unique geodesic $(\mu_t)$ from $\mu_0$ to $\mu_1$, it satisfies
\begin{equation}
\label{eq:linftyrcd}
\mu_t\leq C'\mm\qquad\forall t\in[0,1]\text{ for some }C'>0\,,
\end{equation}
and there is a unique \emph{lifting} $\ppi$ of it, i.e.\ a unique measure $\ppi\in\prob{C([0,1],X)}$ such that $(\e_t)_*\ppi=\mu_t$ for every $t\in[0,1]$ and 
\[
\iint_0^1|\dot\gamma_t|^2\,\d t\,\d\ppi(\gamma)=W_2^2(\mu_0,\mu_1)\,.
\]
\end{Theorem}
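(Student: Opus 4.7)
The plan is to combine two ingredients specific to the $\RCD(K,\infty)$ framework: Rajala's construction of $L^\infty$-bounded $W_2$-geodesics, and the strict displacement convexity of the Boltzmann entropy afforded by infinitesimal Hilbertianity.

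First, I would establish existence together with the bound \eqref{eq:linftyrcd}. Since $\mu_0,\mu_1$ are bounded densities with bounded support, both have finite relative entropy $\mathrm{Ent}_\mm(\mu_i)$ and finite second moment. Abstract $\CD(K,\infty)$ already yields a $W_2$-geodesic along which $\mathrm{Ent}_\mm$ is $K$-convex, but such a geodesic is not obviously absolutely continuous at intermediate times. The key step is therefore Rajala's refined construction: one modifies a chosen $W_2$-optimal coupling of $\mu_0$ and $\mu_1$ by truncating the fibres along which the interpolated density would blow up and redistributing mass so as to preserve optimality; a limit procedure then produces an optimal coupling whose geodesic interpolation $(\mu_t)$ satisfies $\mu_t \leq C'\mm$ for every $t\in[0,1]$, with $C'$ depending only on $K$, $C$ and the diameter of $\supp\mu_0 \cup \supp\mu_1$.

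Second, I would prove uniqueness of $(\mu_t)$. The $\RCD$ assumption upgrades the plain $K$-convexity of $\mathrm{Ent}_\mm$ along $W_2$-geodesics to a \emph{strict} convexity statement, this rigidity being precisely what distinguishes infinitesimally Hilbertian spaces within the $\CD$ class. Concretely, if $(\mu_t^0),(\mu_t^1)$ were two distinct $W_2$-geodesics joining the same endpoints, one can optimally couple their midpoints and average to build a competitor whose entropy at $t=1/2$ strictly beats the bound imposed by $K$-convexity along each of the originals, a contradiction. The $L^\infty$ bound obtained in the first step is what makes all the entropies involved finite and the argument meaningful.

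Finally, uniqueness of the lifting $\ppi$ follows from uniqueness of $(\mu_t)$: any admissible $\ppi$ must concentrate on constant-speed $W_2$-optimal geodesic selections joining endpoints sampled from the optimal coupling of $\mu_0$ and $\mu_1$, and the essentially non-branching character of $\RCD(K,\infty)$ spaces then forces the disintegration of $\ppi$ with respect to $(\e_0,\e_1)_*\ppi$ to be Dirac on the unique geodesic between its endpoints, leaving no freedom for $\ppi$. I expect the main obstacle to be the $L^\infty$ bound \eqref{eq:linftyrcd}: $\CD(K,\infty)$ in isolation yields only entropy control, so Rajala's explicit selection of a "good" optimal coupling, with its careful mass-redistribution argument, is the genuinely non-trivial ingredient.
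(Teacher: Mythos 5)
This statement is not proved in the paper at all: it is recalled verbatim from the literature, with the existence/$L^\infty$ part due to Rajala and the uniqueness part to Gigli and to Rajala--Sturm (the two references cited right before the statement). So the real task is to assemble those results correctly, and your assembly has one broken joint. Your first ingredient is right: Rajala's construction of $W_2$-geodesics with uniformly bounded densities under $\CD(K,\infty)$ is exactly what gives \eqref{eq:linftyrcd}, with $C'$ depending on $K$, $C$ and the diameter of the supports.

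The genuine gap is your second step. The entropy is \emph{not} strictly displacement convex on $\RCD(K,\infty)$ spaces -- already in $\R^n$ it is affine along the geodesic of translations -- so there is no ``strict convexity upgrade'' to invoke. Worse, your contradiction does not materialize: if $(\mu^0_t)$ and $(\mu^1_t)$ are two geodesics with the same endpoints, then $\big(\tfrac12(\mu^0_t+\mu^1_t)\big)_t$ is again a geodesic, but by convexity of $z\mapsto z\log z$ one has ${\rm Ent}_\mm\big(\tfrac12(\mu^0_{1/2}+\mu^1_{1/2})\big)\le\tfrac12{\rm Ent}_\mm(\mu^0_{1/2})+\tfrac12{\rm Ent}_\mm(\mu^1_{1/2})$, i.e.\ the averaged midpoint satisfies the $K$-convexity bound \emph{better}, not worse. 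The actual mechanism (Rajala--Sturm) is that the strong $\CD(K,\infty)$ condition -- $K$-convexity of the entropy along \emph{every} geodesic, which $\RCD$ provides -- implies that the space is essentially non-branching, and one then proves uniqueness of the \emph{lifting} first: every optimal dynamical plan starting from $\mu_0\ll\mm$ is induced by a map, so if $\ppi^1\ne\ppi^2$ were two liftings, $\tfrac12(\ppi^1+\ppi^2)$ would be an optimal dynamical plan not induced by a map, a contradiction. Uniqueness of $(\mu_t)$ then \emph{follows} from uniqueness of the lifting, since every $W_2$-geodesic admits a lifting by the superposition principle; your order of deduction is reversed. Note also that in your third step essential non-branching alone does not force the disintegration of $\ppi$ over $(\e_0,\e_1)$ to be a Dirac: two distinct geodesics sharing both endpoints do not branch, so that step tacitly uses the (separately proved) a.e.\ uniqueness of geodesics between points charged by the optimal coupling.
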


Given $f : \X \to \R$ the {\bf local Lipschitz constant} $\lip(f) : \X \to [0,\infty]$ is defined as 0 on isolated points and otherwise as
\[
\lip f(x) := \limsup_{y \to x}\frac{|f(x) - f(y)|}{\sfd(x,y)}\,.
\]
If $f$ is Lipschitz, then its {\bf Lipschitz constant} is denoted by $\Lip f$. With this notion we can recall some properties of the {\bf Hopf--Lax semigroup} in metric spaces, also in connection with optimal transport. For $f : \X \to \R \cup \{+\infty\}$ and $t > 0$ we define the function $Q_tf : \X \to \R \cup \{-\infty\}$ as
\begin{equation}
\label{eq:hli}
Q_tf(x) := \inf_{y\in \X}\Big\{f(y) + \frac{\sfd^2(x,y)}{2t}\Big\}
\end{equation}
and set 
\[
t_* := \sup\{t > 0 \,:\, Q_tf(x) > -\infty \textrm{ for some } x \in \X\}\,.
\]
It is worth saying that $t_*$ does not actually depend on $x$, since if $Q_tf(x) > -\infty$, then $Q_sf(y) > -\infty$ for all $s \in (0,t)$ and all $y \in \X$. With this premise we have the following result (see \cite{AmbrosioGigliSavare11}):

\begin{Proposition}\label{pro:11}
Let $(\X,\sfd)$ be a length space and $f : \X \to \R \cup \{+\infty\}$. Then for all $x \in \X$ the map $(0,t_*) \ni t \mapsto Q_tf(x)$ is locally Lipschitz and
\[
\ddt Q_tf(x) + \frac{1}{2}\Big(\lip Q_tf(x)\Big)^2 = 0\qquad {\rm a.e.}\ t \in (0,t_*)\,.
\]
\end{Proposition}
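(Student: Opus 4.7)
I would follow the standard strategy for Hopf--Lax semigroups in metric spaces: introduce auxiliary ``size'' functions measuring how far approximate minimizers in \eqref{eq:hli} can be from $x$, and then read off both the time derivative and the local Lipschitz constant from them. Concretely, set
\[
D^+(x,t):=\sup\Bigl\{\lims_n\sfd(x,y_n):f(y_n)+\tfrac{\sfd^2(x,y_n)}{2t}\to Q_tf(x)\Bigr\},
\]
and define $D^-(x,t)$ analogously with $\inf$ and $\liminf$. The first step is to prove the monotonicity $D^+(x,s)\le D^-(x,t)$ for $0<s<t<t_*$: this comes by adding the two inequalities obtained by testing an $s$-minimizing sequence against $Q_tf(x)$ and a $t$-minimizing one against $Q_sf(x)$, which after the $f$-terms cancel reduces to a monotone-rearrangement bound on squared distances. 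In particular $D^\pm(x,\cdot)$ are non-decreasing and coincide outside a countable exceptional set $N\subset(0,t_*)$ where they are continuous; call the common value $D(x,t)$.

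Next I would compute the one-sided derivatives of $t\mapsto Q_tf(x)$. Testing the definition of $Q_sf(x)$ against a minimizing sequence for $Q_tf(x)$ gives
\[
Q_sf(x)-Q_tf(x)\le \frac{\sfd^2(x,y_n)}{2}\Bigl(\frac{1}{s}-\frac{1}{t}\Bigr)+o(1),
\]
and dividing by $s-t$ and passing to the limit yields $\ddt^+Q_tf(x)\le -D^-(x,t)^2/(2t^2)$ (for $s\downarrow t$) and the matching bound with $D^+$ from the other side. At each $t\notin N$ these agree and produce
\[
\ddt Q_tf(x)=-\frac{D(x,t)^2}{2t^2}.
\]
Local Lipschitzness of $t\mapsto Q_tf(x)$ on $(0,t_*)$ is then a consequence of this, together with the local boundedness of $D^\pm(x,\cdot)$, which follows from its monotonicity and finiteness on the interior of the domain of definition.

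The final and most delicate step is to identify $\lip Q_tf(x)=D(x,t)/t$ at each $t\notin N$. The upper bound $\lip Q_tf(x)\le D^-(x,t)/t$ is soft: for $x'$ close to $x$ and $(y_n)$ an $x$-minimizing sequence, the triangle inequality in \eqref{eq:hli} tested with $y=y_n$ gives $Q_tf(x')-Q_tf(x)\le \sfd(x,x')\,\sfd(x,y_n)/t+o(\sfd(x,x'))$, and one sends $n\to\infty$. The lower bound $\lip Q_tf(x)\ge D^+(x,t)/t$ is where the length-space hypothesis becomes crucial: given an almost-minimizer $y$, I would use the length structure to pick points $x'$ along (an approximation of) a geodesic from $x$ to $y$, for which $\sfd(x',y)\le \sfd(x,y)-\sfd(x,x')+o(\sfd(x,x'))$, and then expand
\[
Q_tf(x)-Q_tf(x')\ge \frac{\sfd^2(x,y)-\sfd^2(x',y)}{2t}-\eps \;\ge\; \frac{\sfd(x,x')\,\sfd(x,y)}{t}+o(\sfd(x,x'))-\eps.
\]
Dividing by $\sfd(x,x')$, sending $x'\to x$ along that geodesic, and optimising over $y$ produces the bound. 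The main obstacle is precisely this last step, because the minimizing sequences need not converge and the length-space approximation of geodesics must be done carefully enough to make the expansion uniform in the choice of approximate minimizer; once it is in place, combining the three pieces gives $\ddt Q_tf(x)+\tfrac12(\lip Q_tf(x))^2=0$ for a.e.\ $t\in(0,t_*)$.
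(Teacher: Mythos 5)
Your proposal is correct and is essentially the argument the paper relies on: Proposition \ref{pro:11} is stated without proof and quoted from \cite{AmbrosioGigliSavare11}, whose proof proceeds exactly via the functions $D^{\pm}(x,t)$, their monotonicity and countable set of discontinuities, the one-sided derivative bounds $\ddt Q_tf(x)=-D^{\pm}(x,t)^2/(2t^2)$, and the identification $\lip Q_tf(x)=D^{+}(x,t)/t$ using the length-space structure for the lower bound. The only point to make fully rigorous in your last step is the order of quantifiers: the approximate minimizer $y$ must be chosen with error $o(\sfd(x,x'))$ (e.g.\ taking $\eps=\sfd(x,x')^2$) before dividing by $\sfd(x,x')$ and letting $x'\to x$, which is precisely the uniformity issue you already flag.
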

%

With regard to the notion of Regular Lagrangian Flow, introduced by L.\ Ambrosio and the last-named author in \cite{Ambrosio-Trevisan14} as the generalization to $\RCD$ spaces of the analogous concept existing on $\R^d$ as proposed by Ambrosio in \cite{Ambrosio04}, the definition is the following:

\begin{Definition}[Regular Lagrangian Flow]
Given $(v_t)\in L^1([0,1],L^2(T\X))$, the function $F:[0,1]\times \X\to \X$ is a Regular Lagrangian Flow for $(v_t)$ provided:
\begin{itemize}
\item[(i)] $[0,1]\ni t \mapsto F_t(x)$ is continuous for every $x\in \X$;
\item[(ii)] for every $f\in\testi \X$ and $\mm$-a.e.\ $x$ the map $t\mapsto f(F_t(x))$ belongs to $W^{1,1}([0,1])$ and
\[
\ddt  f(F_t(x))=\d f(v_t)(F_t(x))\qquad {\rm a.e.}\ t\in[0,1]\,;
\]
\item[(iii)] it holds
\[
(F_t)_*\mm\leq C\mm\qquad\forall t\in[0,1]
\]
for some constant $C>0$ called compression constant.
\end{itemize}
\end{Definition}

In \cite[Theorems 4.3 and 4.6]{Ambrosio-Trevisan14} the authors prove that under suitable assumptions on the $v_t$'s, Regular Lagrangian Flows exist and are unique. We shall use the following formulation of their result (weaker than the one provided in \cite{Ambrosio-Trevisan14}). Since in this manuscript we are going to deal only with gradient vector fields, the space $W^{1,2}_C(T\X)$ appearing in the statement below needs not be defined.

\begin{Theorem}\label{thm:RLF}
Let $(\X,\sfd,\mm)$ be an $\RCD(K,\infty)$ space and $(v_t)\in L^1([0,1],W^{1,2}_C(T\X))$ be such that $v_t\in D({\rm div})$ for a.e.\ $t$ and
\[
{\rm div}(v_t) \in L^1([0,1],L^2(\X)) \qquad ({\rm div}(v_t))^- \in L^1([0,1],L^\infty(\X))\,.
\]
Then there exists a unique, up to $\mm$-a.e.\ equality, Regular Lagrangian Flow $F$ for $(v_t)$.

For such flow, the quantitative bound
\begin{equation}
\label{eq:quantm}
(F_t)_*\mm\leq \exp\Big(\int_0^1\|({\rm div}(v_t))^-\|_{L^\infty(\X)}\,\d t\Big)\mm
\end{equation}
holds for every $t\in[0,1]$ and for $\mm$-a.e.\ $x$ the curve $t\mapsto F_t(x)$ is absolutely continuous and its metric speed ${\rm ms}_t({F_{\cdot}}(x))$ at time $t$ satisfies
\begin{equation}
\label{eq:quants}
{\rm ms}_t({F_{\cdot}}(x))=|v_t|(F_t(x))\qquad {\rm a.e.}\ t \in [0,1]\,.
\end{equation}
\end{Theorem}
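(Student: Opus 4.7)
The strategy I would follow is the by-now classical Eulerian-to-Lagrangian route developed in the DiPerna--Lions theory and adapted to non-smooth metric measure structures, where the uniqueness of the continuity equation takes the place of Cauchy--Lipschitz and is promoted to existence/uniqueness of a flow via a superposition principle.

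\textbf{Step 1: well-posedness of the continuity equation.} First, I would solve, for every initial datum $\mu_0=\rho_0\mm$ with $\rho_0\in L^\infty\cap L^1(\X)$ non-negative, the continuity equation
\begin{equation*}
\partial_t\mu_t+\div(v_t\mu_t)=0,\qquad \mu_{t\mid t=0}=\mu_0,
\end{equation*}
in the distributional sense tested against $\testi\X$. Existence is routine by a vanishing-viscosity scheme: solve $\partial_t\mu_t^\sigma=\sigma\Delta\mu_t^\sigma-\div(v_t\mu_t^\sigma)$, check the a priori $L^\infty$-bound via the maximum principle and Gr\"onwall using $\|(\div v_t)^-\|_{L^\infty}$, and pass to the limit. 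This already yields the density bound
\begin{equation*}
\rho_t\le \exp\Bigl(\int_0^t\|(\div v_s)^-\|_{L^\infty}\,\d s\Bigr)\|\rho_0\|_{L^\infty}.
\end{equation*}
The hard part is \emph{uniqueness}, which is where the $\RCD(K,\infty)$ hypothesis enters decisively. Following Ambrosio--Trevisan, I would regularize a generic bounded solution $\rho_t$ through the heat flow, set $\rho_t^\varepsilon:=\h_\varepsilon \rho_t$, and control the commutator
\begin{equation*}
C_\varepsilon(t):=\h_\varepsilon\bigl(\div(v_t\rho_t)\bigr)-\div\bigl(v_t\,\h_\varepsilon\rho_t\bigr).
\end{equation*}
Using $v_t\in W^{1,2}_C(T\X)$ and the Bakry--\'Emery estimate \eqref{eq:be} together with the integrated Bochner inequality \eqref{eq:bochhess}, one shows $\int_0^1\|C_\varepsilon(t)\|_{L^2}\,\d t\to0$. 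From a log-entropy or $L^p$-estimate on $\rho_t^\varepsilon$ this yields the uniqueness of $\rho_t$.

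\textbf{Step 2: superposition and selection.} Given uniqueness at the Eulerian level, I would apply Lisini's superposition principle for absolutely continuous curves in $(\probt\X,W_2)$: any solution $\mu_t=\rho_t\mm$ as above is represented as $\mu_t=(\e_t)_*\ppi$ for some $\ppi\in\prob{C([0,1],\X)}$ concentrated on $AC^2$ curves with $\iint_0^1 {\rm ms}_t(\gamma)^2\,\d t\,\d\ppi(\gamma)<\infty$ and, after standard compatibility arguments, ${\rm ms}_t(\gamma)=|v_t|(\gamma_t)$ for $\ppi$-a.e.\ $\gamma$ and a.e.\ $t$. To pass from $\ppi$ to a deterministic flow $F$, I would disintegrate $\ppi$ with respect to $\e_0$, obtaining $\ppi=\int\ppi_x\,\d\mu_0(x)$, and then invoke the uniqueness of Step 1 applied to the measures $\int_A(\e_\cdot)_*\ppi_x\,\d\mu_0(x)$ for Borel $A\subset\X$ (the classical ``probabilistic uniqueness implies deterministic'' trick): each $\ppi_x$ must be a Dirac mass $\delta_{F_\cdot(x)}$, $\mm$-a.e.\ $x$. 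This produces the Borel map $F$. Repeating the construction with varying bounded $\rho_0$ and glueing via uniqueness, one obtains a well-defined $F:[0,1]\times\X\to\X$, and \eqref{eq:quantm} is just a rewriting of the Eulerian $L^\infty$-bound above, while \eqref{eq:quants} is the transfer of ${\rm ms}_t(\gamma)=|v_t|(\gamma_t)$ along the disintegration.

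\textbf{Main obstacle.} The genuine difficulty is the commutator estimate in Step 1: in the smooth Euclidean case this is the DiPerna--Lions lemma on convolution commutators, but in the $\RCD(K,\infty)$ setting there is no translation structure, so one is forced to mollify by the heat semigroup and exploit \emph{quantitatively} the $\RCD$ condition (through \eqref{eq:be} and the Bochner-with-Hessian bound that controls $|\nabla v_t|$-type quantities) to see that $\h_\varepsilon$ and $v_t\cdot\nabla$ asymptotically commute on $L^2$. Once this commutator goes to zero, everything else---the bound on the density, the superposition, the selection of a deterministic flow, the identification of the metric speed---follows by a routine, if careful, chain of arguments.
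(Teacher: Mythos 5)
This theorem is not proved in the paper: it is imported verbatim (in a weakened form) from Ambrosio--Trevisan \cite{Ambrosio-Trevisan14}, so there is no internal proof to compare against. Your sketch correctly reproduces the strategy of that reference -- Eulerian well-posedness of the continuity equation with the heat-semigroup commutator estimate replacing the DiPerna--Lions convolution lemma, followed by the superposition principle and the ``probabilistic uniqueness implies deterministic'' selection -- and you correctly identify the commutator estimate as the point where the $\RCD$ structure and the hypothesis $v_t\in W^{1,2}_C(T\X)$ are used.

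One caveat on the last step. The superposition principle only yields the one-sided bound ${\rm ms}_t(\gamma)\leq |v_t|(\gamma_t)$ for $\ppi$-a.e.\ $\gamma$ and a.e.\ $t$ (the lifting could a priori move slower than $|v_t|$ if $v_t$ had a component not seen by the curve), so the equality \eqref{eq:quants} does not simply ``transfer along the disintegration'': it requires an additional argument identifying $v_t$ as the velocity of minimal norm, which in the infinitesimally Hilbertian setting follows from the defining property (ii) of the flow tested against enough functions $f\in\testi\X$. The authors themselves flag this: they note that \eqref{eq:quants} is not explicitly stated in \cite{Ambrosio-Trevisan14} and must be obtained separately, e.g.\ following the arguments in \cite{Gigli14}. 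Your ``standard compatibility arguments'' gloss over exactly this point, but the overall architecture of the proof is the right one.
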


To be precise, \eqref{eq:quants} is not explicitly stated in \cite{Ambrosio-Trevisan14}; its proof is anyway not hard and can be obtained, for instance, following the arguments in \cite{Gigli14}.

As anticipated, for the purposes of this manuscript the general definition of the space $W^{1,2}_C(T\X)$ is not relevant: it is sufficient to know that
\begin{equation}
\label{eq:w12c}
\|\nabla\varphi\|_{W^{1,2}_C}\leq C(\|\Delta\varphi\|_{L^2}+\|\nabla\varphi\|_{L^2}),\qquad\forall\varphi\in\testi\X\,,
\end{equation}
see \cite[Corollary 3.3.9]{Gigli14} for the proof.

Finally, we shall also need the following compactness criterion for Regular Lagrangian Flows, stated at the level of the induced test plans. The formulation we adopt is tailored to our framework and is weaker than the original one, contained in \cite{AmbrosioStraTrevisan17} (see Proposition 7.8 therein).

\begin{Theorem}\label{thm:RLFstability}
Let $(\X,\sfd,\mm)$ be an $\RCD(K,\infty)$ space, $(\mu^n) \subset \prob\X$ with $\mu^n \ll \mm$, $(F^n)$ a family of Regular Lagrangian Flows relative to $(v^n_t)$ with compression constants $C_n$ and set $\ppi^n := (F^n_{.})_*\mu^n$, where $F^n_{.} : \X \to C([0,1],\X)$ is the $\mm$-a.e.\ defined map sending $x$ to $t \mapsto F_t^n(x)$. Assume that
\begin{equation}\label{eq:stability1}
\lim_{R \to \infty}\sup_n (\e_0)_*\ppi^n(\X \setminus B_R(\bar{x})) = 0
\end{equation}
for some $\bar{x} \in \X$,
\begin{equation}\label{eq:stability2}
\sup_n C_n < \infty \qquad\textrm{and}\qquad \sup_n \iint_0^1 |v^n_t|^2\d t\d\mm < \infty\,.
\end{equation}
Then $(\ppi^n) \subset \prob{C([0,1],\X)}$ is tight and every limit point $\ppi$ is concentrated on $\AC([0,1],\X)$.
\end{Theorem}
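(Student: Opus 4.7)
The plan is to apply the metric Arzel\`a-Ascoli tightness criterion in $C([0,1],\X)$: a family $\{\mathbb{Q}_n\} \subset \prob{C([0,1],\X)}$ is tight provided (a) the time-zero marginals $\{(\e_0)_*\mathbb{Q}_n\}$ are tight in $\X$, and (b) the path oscillations $\omega_\delta(\gamma) := \sup_{|s-t|\leq\delta}\sfd(\gamma_s,\gamma_t)$ vanish as $\delta \downarrow 0$ uniformly in probability. Condition (a) is exactly assumption \eqref{eq:stability1}, so the heart of the proof is the verification of (b) and the subsequent identification of the limit points.

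For (b), I would use the identity \eqref{eq:quants} together with Cauchy-Schwarz to get, for $\mu^n$-a.e.\ $x$,
\[
\omega_\delta(F^n_\cdot(x))^2 \;\leq\; \delta\,\Phi^n(x), \qquad \text{where}\quad \Phi^n(x) := \int_0^1 |v^n_r|^2(F^n_r(x))\,\d r,
\]
so everything reduces to controlling $\Phi^n$ uniformly in $n$. The key input is the compression bound \eqref{eq:quantm}, which together with Fubini yields
\[
\int \Phi^n\,\d\mm \;=\; \int_0^1 \int |v^n_r|^2 \,\d\bigl((F^n_r)_*\mm\bigr)\,\d r \;\leq\; \Bigl(\sup_n C_n\Bigr)\Bigl(\sup_n \iint_0^1 |v^n_r|^2\,\d r\,\d\mm\Bigr) < \infty,
\]
a uniform $L^1(\mm)$ estimate. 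Combining with $\mu^n \ll \mm$ and Markov's inequality, one derives $\sup_n \mu^n(\{\Phi^n > R\}) \to 0$ as $R \to \infty$, whence
\[
\sup_n \ppi^n(\{\omega_\delta > \eta\}) \;\leq\; \sup_n \mu^n\bigl(\{\Phi^n > \eta^2/\delta\}\bigr) \;\longrightarrow\; 0 \quad\text{as } \delta\downarrow 0,
\]
giving the tightness of $(\ppi^n)$. For any weak limit $\ppi$ of $(\ppi^{n_k})$, lower semicontinuity of the kinetic action $\mathcal{E}(\gamma) := \int_0^1 |\dot\gamma_t|^2\,\d t$ (extended to $+\infty$ outside $AC^2$) combined with Fatou yields
\[
\int \mathcal{E}\,\d\ppi \;\leq\; \liminf_k \int \Phi^{n_k}\,\d\mu^{n_k} \;<\; \infty,
\]
forcing $\ppi$-a.e.\ $\gamma \in AC^2 \subset AC([0,1],\X)$.

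The main obstacle is the transfer from the $L^1(\mm)$-bound on $\Phi^n$ to a uniform $\mu^n$-integrability/tightness (relevant both to the equicontinuity step and to the equiboundedness of $\int \Phi^n\,\d\mu^n$ used in the concentration step): since no uniform $L^\infty$ bound on $\d\mu^n/\d\mm$ is assumed, this step must rely on a truncation of $\Phi^n$ together with the initial tightness \eqref{eq:stability1}, so as to absorb the contribution from those exceptional sets on which the densities $\rho^n := \d\mu^n/\d\mm$ may concentrate.
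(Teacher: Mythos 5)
First, a remark on the comparison itself: the paper does not prove this statement, but quotes it from \cite{AmbrosioStraTrevisan17} (Proposition 7.8 there), so your proposal has to be measured against that reference, whose strategy it essentially reproduces: Arzel\`a--Ascoli reduction, the metric-speed identity \eqref{eq:quants} plus Cauchy--Schwarz for the oscillation, the compression bound \eqref{eq:quantm} for the $L^1(\mm)$ estimate on $\Phi^n$, and lower semicontinuity of the action for the limit points. Those ingredients are right. The problem is that the step you flag at the end as ``the main obstacle'' is not something that can be repaired by ``a truncation of $\Phi^n$ together with the initial tightness \eqref{eq:stability1}'': it requires the extra hypothesis $\sup_n\|\d\mu^n/\d\mm\|_{L^\infty}<\infty$, which is present in \cite{AmbrosioStraTrevisan17} and in every application made in this paper (there $\mu^n$ is always a normalized restriction of $\mm$ to a ball of fixed radius), but is absent from the statement as reproduced here. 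Assumption \eqref{eq:stability1} only constrains where the mass of $\mu^n$ sits, not how concentrated its density is, and the $L^1(\mm)$ bound on $\Phi^n$ gives no control on $\mu^n(\{\Phi^n>R\})$ when $\mu^n$ may charge $\mm$-small sets. Without the density bound the conclusion actually fails: on $\R^2$ with Lebesgue measure take the smooth divergence-free fields $v^n=h_n(|x|)\,x^\perp/|x|$ with $h_n\equiv\eps_n^{-1/2}$ on the annulus $A_n=\{1\le|x|\le1+\eps_n\}$ and supported in a $3\eps_n$-neighbourhood of it; then $C_n=1$, $\iint_0^1|v^n_t|^2\,\d t\,\d\mm=O(1)$, and with $\mu^n:=\mm(A_n)^{-1}\mm\restr{A_n}$ all stated hypotheses hold, yet $\ppi^n$ is concentrated on curves sweeping an arc of unit length in time $\eps_n^{1/2}\to0$, so $(\ppi^n)$ is not tight.

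A second, related gap: your reduction ``tightness of $(\e_0)_*\ppi^n$ plus oscillation control $\Rightarrow$ tightness'' is valid only when $\X$ is proper. In a general $\RCD(K,\infty)$ space closed bounded sets need not be compact, and equicontinuity together with $\gamma_0$ lying in a compact set only confines $\gamma_t$ to a \emph{bounded} set; Arzel\`a--Ascoli requires relative compactness of $\{\gamma_t:\gamma\in\mathcal K,\ t\in[0,1]\}$, i.e.\ tightness of the marginals $(\e_t)_*\ppi^n$ uniformly in $t$ and $n$. This is the second place where the compression and density bounds enter: $(\e_t)_*\ppi^n=(F^n_t)_*\mu^n\le\|\d\mu^n/\d\mm\|_{L^\infty}C_n\,\mm$, and a family of probability measures dominated by a fixed multiple of $\mm$ and carried, up to mass $\eps$, by a fixed bounded set is tight by inner regularity of the finite measure $\mm$ restricted to that set. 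Once you add the hypothesis $\mu^n\le C'\mm$ (uniformly in $n$), both of these points close immediately and your Markov and Fatou steps go through verbatim, since then $\int\Phi^n\,\d\mu^n\le C'\int\Phi^n\,\d\mm$.
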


\subsection{Large deviations and \texorpdfstring{$\Gamma$}{Gamma}-convergence}

The notion of Large Deviations is meaningful in a very abstract and general framework, namely in a Polish space $(\X,\tau)$. In what follows:
\begin{itemize}
\item[-] $I:\X\to[0,\infty]$ is  a given function, called \emph{rate} function, with \textbf{compact} sublevels (and in particular lower semicontinuous). 
\item[-]   $(\mu_t) \subset \prob\X$, $t\in(0,1)$ is a given family of Borel probability measures. 
\end{itemize}

\begin{Definition}\label{def:ldp}
We say that $(\mu_t)$ satisfies:
\begin{itemize}
\item[(i)] a Large Deviations lower bound with speed $t^{-1}$ and rate $I$ if for each open set $O \subset \X$
\[
\liminf_{t\downarrow0} t\log\mu_t(O) \geq -\inf_{x \in O}I(x)\,;
\]
\item[(ii)] a Large Deviations upper bound with speed $t^{-1}$ and rate $I$ if for each closed set $C \subset \X$
\[
\limsup_{t\downarrow0} t\log\mu_t(C) \leq -\inf_{x \in C}I(x)\,.
\]
\end{itemize}
If both lower and upper bounds hold with same rate and speed, we say that $(\mu_t)$ satisfies a  Large Deviations Principle.
\end{Definition}

Thus recalling the Portmanteau Theorem, Large Deviations can be thought of as an `exponential' analog of weak convergence of measures. 

As discussed in detail in  \cite{Mariani18}, the concept of Large Deviation is also tightly linked to that of $\Gamma$-convergence of (rescaled) entropy functionals. Let us recall that  given $\nu \in \prob\X$, the entropy $H(\cdot\,|\,\nu):\prob\X\to[0,\infty]$ relative to $\nu$  is defined  as
\[
H(\sigma\,|\,\nu):=\left\{\begin{array}{ll}
\displaystyle{\int\rho\log(\rho)\,\d{\nu}} & \qquad \text{ if } \sigma = \rho\nu,\\
+\infty & \qquad \text{ if } \sigma \not\ll {\nu}.
\end{array}\right.
\]
Note that Jensen's inequality and the fact that $\nu\in \prob\X$ grant that $\int\rho\log(\rho)\,\d\nu$ is well defined and non-negative. We also recall De Giorgi's concept of $\Gamma$-convergence (see e.g.\ \cite{DM12}), that we shall only state for family of functionals on $\prob\X$ and for weak convergence of measures:

\begin{Definition}[$\Gamma$ convergence]
Let $(F_t)$, $t\geq 0$, be given real-valued functionals on $\prob\X$. We say that:
\begin{itemize}
\item[(i)] $\Gamma$-$\limsup_t F_t\leq F_0$ provided for every $\nu_0\in\prob\X$ there is $\nu_t\rightharpoonup\nu_0$ such that
\[
\limsup_{t\downarrow0}F_t(\nu_t)\leq F_0(\nu_0)\,.
\] 
\item[(ii)] $\Gamma$-$\liminf_t F_t\geq F_0$ provided for every $(\nu_t)\subset\prob\X$  such that $\nu_t\rightharpoonup\nu_0$ we have
\[
\liminf_{t\downarrow0}F_t(\nu_t)\geq F_0(\nu_0)\,.
\] 
\end{itemize}
If both $(i)$ and $(ii)$ hold we say that the family $(F_t)$ $\Gamma$-converges to $F_0$.
\end{Definition}

We then recall the following general equivalences. Let us emphasize the presence of the `minus' sign in Definition \ref{def:ldp} and the fact that for given $\nu\in\prob\X$ and $E\subset\X$ Borel we have $H(\nu(E)^{-1}\nu\restr E\,|\,\nu)=-\log(\nu(E))$: this helps understanding why Large Deviations lower bounds are related to the $\Gamma$-$\limsup$ inequality, and and analogously Large Deviations upper bounds to $\Gamma$-$\liminf$.

\begin{Theorem}\label{thm:lower-ldp}
The following are equivalent:
\begin{itemize}
\item[(i)] $(\mu_t)$ satisfies a Large Deviations lower bound with speed $t^{-1}$ and rate $I$;
\item[(ii)] for every $\varphi\in C_b(\X)$ it holds
\[
\liminf_{t\downarrow0 }t\log\int e^{\frac\varphi t}\,\d\mu_t \geq \sup_{x \in \X}\{\varphi(x) - I(x)\}\,;
\]
\item[(iii)] for every $\nu \in \prob\X$ it holds
\begin{equation}
\label{eq:glimsld}
\Big(\Gamma\textrm{-}\limsup_{t\downarrow0}\, tH(\cdot\,|\,\mu_t)\Big)(\nu) \leq \int I\,\d\nu\,;
\end{equation}
\item[(iii')] for every $x \in \X$ the inequality \eqref{eq:glimsld}  holds for $\nu:=\delta_x$.
\end{itemize}
\end{Theorem}

\begin{Theorem}\label{thm:upper-ldp} 
The following are equivalent:
\begin{itemize}
\item[(i)] $(\mu_t)$ satisfies a Large Deviations  upper bound with speed $t^{-1}$ and rate $I$;
\item[(ii)] for every $\varphi\in C_b(\X)$ it holds
\[
\limsup_{t\downarrow0 }t\log\int e^{\frac\varphi t}\,\d\mu_t \leq \sup_{x \in \X}\{\varphi(x) - I(x)\}\,;
\]
\item[(iii)] for every $\nu \in \prob\X$ it holds
\begin{equation}
\label{eq:glimild}
\Big(\Gamma\textrm{-}\liminf_{t\downarrow0}\, tH(\cdot\,|\,\mu_t)\Big)(\nu) \geq \int I\,\d\nu\,;
\end{equation}
\item[(iii')] for every $x \in \X$ the inequality \eqref{eq:glimild}  holds for $\nu:=\delta_x$.
\end{itemize}
\end{Theorem}
The equivalence of $(i),(ii)$ in these statements is standard in the field, see for instance \cite[Lemma 4.3.4, 4.3.6, 4.4.5, 4.4.6]{DZ98}  for the proof. The fact that these are equivalent to $(iii),(iii')$ is proved in \cite[Theorem 3.4, 3.5]{Mariani18}.

\section{Contraction estimates for gradient and Laplacian}\label{sec:contraction}

Aim of this section is to get gradient and Laplacian estimates for $\log\h_t e^\varphi$ in terms of the quantities $\| |\nabla\varphi|\|_{L^\infty(\X)}$ and $\|(\Delta \varphi)^-\|_{L^\infty(\X)}$.

The proofs are based on a comparison principle, valid in general infinitesimally Hilbertian spaces $(\Y,\sfd_\Y,\mm_\Y)$. To formulate it, we shall denote   by $W^{-1,2}(\Y)$ the dual of $W^{1,2}(\Y)$. As usual, the fact that $W^{1,2}(\Y)$ embeds in $L^2(\Y)$ with dense image allows to see $L^2(\Y)$ as a dense subset of $W^{-1,2}(\Y)$, where $f\in L^2(\Y)$ is identified with the mapping $W^{1,2}(\Y)\ni g\mapsto \int fg\,\d\mm_\Y$. 

\begin{Lemma}\label{le:kyoto}
Let $(\Y,\sfd_\Y,\mm_\Y)$ be an infinitesimally Hilbertian space and $T>0$. Then
\begin{equation}
\label{eq:inclspazi}
L^\infty([0,T],W^{1,2}(\Y))\ \cap\  C([0,T],W^{-1,2}(\Y))\quad \subset\quad  C([0,T],L^2(\Y))\,.
\end{equation}
Moreover, for $(g_t)\in L^\infty([0,T],W^{1,2}(\Y))\cap \AC([0,T],W^{-1,2}(\Y))$ the function $[0,T]\ni t \mapsto \frac12\int|(g_t)^+|^2\,\d\mm_\Y$ is  absolutely continuous  and it holds
\begin{equation}
\label{eq:tesilemma}
\ddt \frac12\int|(g_t)^+|^2\,\d\mm_\Y = \int (g_t)^+\frac{\d}{\d t}g_t\,\d\mm_\Y\,,\qquad {\rm a.e.}\ t,
\end{equation}
where the integral in the right-hand side is intended as the coupling of $(g_t)^+\in W^{1,2}(\Y)$ with $\frac\d{\d t}g_t\in W^{-1,2}(\Y)$.
\end{Lemma}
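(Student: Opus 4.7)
The inclusion \eqref{eq:inclspazi} follows from the elementary interpolation inequality
$$\|u\|_{L^2(\Y)}^2 \leq \|u\|_{W^{1,2}(\Y)}\,\|u\|_{W^{-1,2}(\Y)}, \qquad u\in W^{1,2}(\Y),$$
which is obtained by testing the dual norm $\|u\|_{W^{-1,2}(\Y)} = \sup\{\int uv\,\d\mm_\Y : \|v\|_{W^{1,2}}\leq 1\}$ against $v = u/\|u\|_{W^{1,2}(\Y)}$. Applied to differences $h_t - h_s$, the $L^\infty$-bound in $W^{1,2}$ together with continuity in $W^{-1,2}$ immediately yields continuity in $L^2$.

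For \eqref{eq:tesilemma} my plan is to combine mollification in time with a smooth approximation of $r\mapsto (r^+)^2/2$. Extend $h$ outside $[0,T]$ by its boundary values and, with $\rho_\eps\in C_c^\infty(-1,1)$ a standard kernel, define $h^\eps_t := \int \rho_\eps(t-s) h_s\,\d s$ (a Bochner integral in $W^{-1,2}(\Y)$). The uniform bound in $W^{1,2}$ transfers to $h^\eps_t$, so $t\mapsto h^\eps_t$ is $C^\infty$ into $W^{1,2}(\Y)$, with $\tfrac{\d}{\d t}h^\eps_t = \int \rho'_\eps(t-s)h_s\,\d s$ in $W^{-1,2}(\Y)$. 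Choose $C^2$ convex functions $\Phi_\delta:\R\to[0,\infty)$ with $\Phi_\delta(0)=0$, $\Phi'_\delta$ uniformly $1$-Lipschitz, $\Phi_\delta(r)\uparrow (r^+)^2/2$ and $\Phi'_\delta(r)\to r^+$ as $\delta\downarrow 0$. Then $\Phi'_\delta\circ h^\eps_r \in W^{1,2}(\Y)$ by Lipschitz composition, and the smooth chain rule in time gives
$$\int \Phi_\delta(h^\eps_t)\,\d\mm_\Y - \int \Phi_\delta(h^\eps_s)\,\d\mm_\Y = \int_s^t \bigl\langle \Phi'_\delta(h^\eps_r),\,\tfrac{\d}{\d r}h^\eps_r\bigr\rangle\,\d r,$$
the pairing being the $W^{1,2}$/$W^{-1,2}$ duality.

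It remains to pass to the limit first in $\eps\downarrow 0$ and then in $\delta\downarrow 0$. For the former, part 1 gives $h^\eps\to h$ in $C([0,T],L^2(\Y))$ while standard properties of time-convolution on $AC([0,T],W^{-1,2}(\Y))$ give $\tfrac{\d}{\d r}h^\eps_r\to \tfrac{\d}{\d r}h_r$ in $L^1([0,T],W^{-1,2}(\Y))$; combined with the uniform $W^{1,2}$-bound, this yields the $\eps$-independent identity with $h^\eps$ replaced by $h$. For the latter, dominated convergence (with domination $|\Phi_\delta(r)|\leq r^2/2$, $|\Phi'_\delta(r)|\leq|r|$) transports the LHS to $\tfrac12\int|(h_t)^+|^2\,\d\mm_\Y -\tfrac12\int|(h_s)^+|^2\,\d\mm_\Y$, and a parallel argument on the RHS produces $\int_s^t\langle (h_r)^+,\tfrac{\d}{\d r}h_r\rangle\,\d r$. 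Absolute continuity of $t\mapsto \tfrac12\int|(h_t)^+|^2\,\d\mm_\Y$ and the almost-everywhere identity \eqref{eq:tesilemma} then follow by Lebesgue differentiation.

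The delicate point is the limit $\delta\downarrow 0$ in the duality pairing: since $\tfrac{\d}{\d r}h_r$ only lives in $W^{-1,2}(\Y)$, one needs strong convergence $\Phi'_\delta(h_r)\to (h_r)^+$ in $W^{1,2}(\Y)$, not merely in $L^2(\Y)$. Convergence of the functions themselves is routine by dominated convergence, but for the gradients one invokes the chain rule for post-composition with Lipschitz maps in infinitesimally Hilbertian spaces, giving $|\nabla\Phi'_\delta(h_r)| = \Phi''_\delta(h_r)|\nabla h_r|$, together with the standard fact that $|\nabla h_r|=0$ $\mm_\Y$-a.e.\ on $\{h_r=0\}$; dominated convergence then delivers strong $W^{1,2}$-convergence and closes the argument.
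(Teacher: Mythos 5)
Your argument is correct, but it takes a genuinely different route from the paper's at both steps. For the inclusion \eqref{eq:inclspazi} you use the interpolation inequality $\|u\|_{L^2}^2\le\|u\|_{W^{1,2}}\|u\|_{W^{-1,2}}$, which is quantitative and immediate; the paper instead extracts, along $t_n\to t$, a weakly $W^{1,2}$-convergent subsequence, identifies the limit via the $W^{-1,2}$-convergence, and upgrades weak to strong $L^2$-convergence through the identity $\int h_{t_n}^2\,\d\mm_\Y=\la h_{t_n},h_{t_n}\ra_{W^{1,2},W^{-1,2}}$. (In either route one should first note, as the paper does, that the essential bound in $L^\infty([0,T],W^{1,2}(\Y))$ upgrades to $\sup_{t\in[0,T]}\|h_t\|_{W^{1,2}}<\infty$ by lower semicontinuity of the $W^{1,2}$-norm under $W^{-1,2}$-convergence; your uniform continuity estimate then needs only this.) For \eqref{eq:tesilemma} the paper regularizes in \emph{space}: it extends $\h_\eps$ to a bounded operator $W^{-1,2}(\Y)\to L^2(\Y)$, so that $(\h_\eps h_t)\in AC([0,T],L^2(\Y))$, applies the chain rule for $z\mapsto z^+$ directly in that setting, and passes to the limit $\eps\downarrow0$ in the integrated identity using only the continuity of $h\mapsto h^+$ from $W^{1,2}$-strong to $W^{1,2}$-\emph{weak}. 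You regularize in \emph{time} and additionally smooth the nonlinearity $(r^+)^2/2$; this is the more classical parabolic-theory route, and it buys you an honest $C^1$ chain rule at the price of two extra (but standard) ingredients: the chain rule for post-composition with $C^{1,1}$ functions and the locality property $|\nabla h_r|=0$ $\mm_\Y$-a.e.\ on $\{h_r=0\}$. One remark: the strong $W^{1,2}$-convergence of $\Phi'_\delta(h_r)$ that you flag as the delicate point is more than you need. Since $\frac{\d}{\d r}h_r$ is a fixed element of $W^{-1,2}(\Y)$ for a.e.\ $r$, the uniform bound $\|\Phi'_\delta(h_r)\|_{W^{1,2}}\le\|h_r\|_{W^{1,2}}$ together with $L^2$-convergence already yields weak $W^{1,2}$-convergence, which suffices in the duality pairing (and is exactly the weak continuity the paper exploits), with dominated convergence in $r$ handled by the bound $\sup_t\|h_t\|_{W^{1,2}}\,\|\frac{\d}{\d r}h_r\|_{W^{-1,2}}\in L^1(0,T)$.
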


\begin{proof} It is readily verified that both the $W^{1,2}$-norm and $L^2$-norm are lower semicontinuous w.r.t.\ $W^{-1,2}$-convergence. Hence for $(g_t)$ in the space in the left-hand side in \eqref{eq:inclspazi} we have $\sup_{t\in[0,T]}\|g_t\|_{W^{1,2}}<\infty$. Now let $t_n \to t$ and note that, by what just said, up to passing to a non-relabeled subsequence we can assume that $g_{t_n}\weakto g$ in $W^{1,2}(\Y)$ for some $g\in W^{1,2}(\Y)$. Since in addition we know that $g_{t_n}\to g_t $ in $W^{-1,2}(\Y)$ we must have $g=g_t$ and by the coupling of strong and weak convergences we see that
\[
\int g_{t_n}^2\,\d\mm_\Y=\la g_{t_n},g_{t_n}\ra_{W^{1,2},W^{-1,2}}\to \la g_{t},h_{t}\ra_{W^{1,2},W^{-1,2}}=\int g_t^2\,\d\mm\,.
\]
Since $W^{1,2}$-weak convergence implies $L^2$-weak convergence, we have $L^2$-weak convergence and convergence of $L^2$-norms, so \eqref{eq:inclspazi} is proved.

Now let $(g_t) \in \AC([0,T],L^2(\Y))$, then $\Phi(t) := \frac12\int|(g_t)^+|^2\,\d\mm_\Y$ is locally absolutely continuous on $(0,\infty)$ and, setting $\phi(z) := z^+ = \max\{0,z\}$, \eqref{eq:tesilemma} follows by observing that
\[
\Phi'(t) = \int\phi(g_t)\ddt\phi(g_t)\,\d\mm_\Y = \int\phi'(g_t)\,\phi(g_t)\ddt g_t\,\d\mm_\Y = \int\phi(g_t)\ddt g_t \,\d\mm_\Y\,.
\]
For the general case we argue by approximation via the heat flow. Fix $\eps>0$ and note that the fact that $\h_\eps$ is a contraction in $W^{1,2}(\Y)$ and a bounded operator from $L^2(\Y)$ to $W^{1,2}(\Y)$ yield the inequalities
\[
\begin{split}
& \|\h_\eps f\|_{L^2} = \sup_{\|g\|_{L^2} \leq 1}\int \h_\eps f\,g\,\d\mm_\Y \leq \sup_{\|g\|_{L^2} \leq 1}\|\h_\eps g\|_{W^{1,2}}\|f\|_{W^{-1,2}} \leq C_\eps\|f\|_{W^{-1,2}}\,, \\
& \|\h_\eps f\|_{W^{-1,2}} = \sup_{\|g\|_{W^{1,2}} \leq 1}\int \h_\eps f\,g\,\d\mm_\Y \leq \sup_{\|g\|_{W^{1,2}} \leq 1}\|\h_\eps g\|_{W^{1,2}}\|f\|_{W^{-1,2}} \leq \|f\|_{W^{-1,2}}\,,
\end{split}
\]
for all $f \in L^2(\Y)$, which together with the density of $L^2(\Y)$ in $W^{-1,2}(\Y)$ ensures that $\h_\eps$ can be uniquely extended to a bounded linear operator from $W^{-1,2}(\Y)$ to $L^2(\Y)$ which is also a contraction when seen with values in $W^{-1,2}(\Y)$. It is then clear that $\h_\eps f \to f$ in $W^{-1,2}(\Y)$ as $\eps \downarrow 0$ for every $f \in W^{-1,2}(\Y)$. It follows that for $(g_t)$ as in the assumption we have $(\h_\eps g_t) \in \AC([0,T],L^2(\Y))$, so that, by what previously said, the conclusion holds for such curve and writing the identity \eqref{eq:tesilemma} in integral form we have
\[
\frac12\int | (\h_\eps g_{t_1})^+|^2- | (\h_\eps g_{t_0})^+|^2\,\d\mm_\Y = \int_{t_0}^{t_1}\int  \big(\h_{\eps}g_t\big)^+\h_\eps\Big(\ddt g_t\Big)\,\d\mm_\Y\,\d t \qquad\forall 0 \leq t_0 \leq t_1 \leq T\,.
\] 
Letting $\eps \downarrow 0$, using the continuity at $\eps=0$ of $\h_\eps$ seen as an operator on each of the spaces $W^{1,2}(\Y),L^2(\Y),W^{-1,2}(\Y)$, and the continuity of $g \mapsto g^+$ as map from $W^{1,2}(\Y)$ with the strong topology to $W^{1,2}(\Y)$ with the weak one (which follows from the continuity of the same operator in $L^2(\Y)$ together with the fact that it decreases the $W^{1,2}$-norm), we obtain
\begin{equation}
\label{eq:tesik}
\frac12\int |g_{t_1}^+|^2 - |g_{t_0}^+|^2\,\d\mm_\Y = \int_{t_0}^{t_1}\int (g_t)^+ \ddt g_t \,\d\mm_\Y\,\d t \qquad \forall 0 \leq t_0 \leq t_1 \leq T\,.
\end{equation}
Since clearly we have
\[
\bigg|\int_{t_0}^{t_1}\int (g_t)^+ \ddt g_t \,\d\mm_\Y\,\d t\bigg| \leq \|(g_t)\|_{L^\infty([t_0,t_1],W^{1,2})}  \|(\tfrac \d{\d t} g_t)\|_{L^1([t_0,t_1],W^{-1,2})}\,,
\] 
the identity \eqref{eq:tesik} and the assumptions on $(g_t)$ grant the local absolute continuity of $t \mapsto \frac12\int |g_t^+|^2\,\d\mm_\Y$. Then the conclusion follows by differentiating \eqref{eq:tesik}.
\end{proof}
%

In the following statement we are going to prove a general comparison principle between super- and sub-solutions of the equation
\begin{equation}
\label{eq:persupsub}
\frac{\d}{\d t} u_t = \Delta u_t + a_1u_t + \langle \nabla u_t, v_t \rangle + a_2\,.
\end{equation}

\begin{Proposition}[A comparison principle]\label{pro:3}
Let $(\Y,\sfd_\Y,\mm_\Y)$ be an infinitesimally Hilbertian space,  $a_1,a_2,T \in \R$ with $T>0$ and $(v_t)$ be a Borel family of vector fields with 
\begin{equation}
\label{eq:boundvt}
(|v_t|)\in  L^\infty([0,T],L^\infty(\Y))\,.
\end{equation}
Also, let $(F_t),(G_t)$ be time-dependent families of functions belonging to the space
\begin{equation}
\label{eq:spazi2}
L^\infty([0,T],W^{1,2}(\Y))  \cap \AC([0,T],W^{-1,2}(\Y))\,.
\end{equation}
Assume that $(F_t),(G_t)$ are respectively a weak super- and weak sub-solution of \eqref{eq:persupsub}, meaning that for every $h\in \Lip_{\bs, +}(\Y)$ the functions $t\mapsto \int hF_t\,\d\mm_\Y,\int hG_t\,\d\mm_\Y$ (that are  absolutely continuous on $[0,T]$ by the assumptions on $(F_t),(G_t)$) satisfy
\begin{eqnarray}
&& \ddt \int h F_t\d\mm_\Y \geq -\int \langle\nabla h,\nabla F_t\rangle\,\d\mm_\Y + \int h\Big(a_1F_t + \langle \nabla F_t,v_t \rangle + a_2\Big)\d\mm_\Y\,, \nonumber \\
&& \ddt \int h G_t\d\mm_\Y \leq -\int \langle\nabla h,\nabla G_t\rangle\,\d\mm_\Y + \int h\Big(a_1G_t + \langle \nabla G_t,v_t \rangle + a_2\Big)\d\mm_\Y\,. \nonumber
\end{eqnarray}
for a.e.\ $t\in [0,T]$. Assume  that $F_0 \geq G_0$ $\mm_\Y$-a.e. Then $F_t \geq G_t$ $\mm_\Y$-a.e.\ for every $t \in [0,T]$.
\end{Proposition}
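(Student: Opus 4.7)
The approach is the classical Gronwall-type linear comparison: set $H_t:=G_t-F_t$, show that $\phi(t):=\tfrac12\int|(H_t)^+|^2\,\d\mm_\Y$ satisfies a linear differential inequality of the form $\phi'\leq C\phi$, and conclude from $\phi(0)=0$ that $\phi\equiv 0$ on $[0,T]$, hence $(H_t)^+=0$ $\mm_\Y$-a.e., i.e.\ $F_t\geq G_t$ $\mm_\Y$-a.e., for every $t\in[0,T]$.

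By linearity, and since the constant $a_2$ drops when subtracting the two weak inequalities, $(H_t)$ lies in the space \eqref{eq:spazi2} and is a weak sub-solution of $\ddt u=\Delta u+a_1 u+\la\nabla u,v_t\ra$: for every non-negative $h\in\Lip_{\bs}(\Y)$ and a.e.\ $t\in[0,T]$,
\[
\ddt\int h H_t\,\d\mm_\Y\leq \int\bigl(-\la\nabla h,\nabla H_t\ra+a_1 h H_t+h\la\nabla H_t,v_t\ra\bigr)\,\d\mm_\Y.
\]
Lemma \ref{le:kyoto} applied to $(H_t)$ then guarantees that $\phi$ is absolutely continuous on $[0,T]$ and
\[
\phi'(t)=\int (H_t)^+\,\ddt H_t\,\d\mm_\Y\qquad\text{for a.e.\ }t\in[0,T],
\]
where the right-hand side is the duality pairing of $(H_t)^+\in W^{1,2}(\Y)$ with $\ddt H_t\in W^{-1,2}(\Y)$.

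The main technical step, which I expect to be the delicate point of the argument, is to extend the admissibility of $h$ in the sub-solution inequality from $\Lip_{\bs}(\Y)_+$ to the time-dependent choice $h=(H_t)^+$. The latter lies in $W^{1,2}(\Y)$ and is non-negative but has no reason to have bounded support. One uses the $W^{1,2}$-density of $\Lip_{\bs,+}(\Y)$ in the cone of non-negative $W^{1,2}$-functions recalled in the preliminaries, noting that each term on the right of the inequality is continuous in $h\in W^{1,2}(\Y)$: Cauchy-Schwarz applied to $H_t\in W^{1,2}(\Y)$ handles the $\la\nabla h,\nabla H_t\ra$ and $a_1 h H_t$ pieces, while the assumption $(|v_t|)\in L^\infty([0,T],L^\infty(\Y))$ takes care of the drift term $h\la\nabla H_t,v_t\ra$.

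Plugging $h=(H_t)^+$ into the extended inequality and combining with the identity for $\phi'$, the chain rule $\nabla(H_t)^+=\nabla H_t$ on $\{H_t>0\}$ and $0$ elsewhere gives $\la\nabla(H_t)^+,\nabla H_t\ra=|\nabla(H_t)^+|^2$, $(H_t)^+H_t=((H_t)^+)^2$ and $(H_t)^+\nabla H_t=(H_t)^+\nabla(H_t)^+$. Young's inequality on the drift term,
\[
(H_t)^+\la\nabla(H_t)^+,v_t\ra\leq \tfrac12|\nabla(H_t)^+|^2+\tfrac12|v_t|^2((H_t)^+)^2,
\]
allows one to absorb the gradient contribution and obtain $\phi'(t)\leq C\phi(t)$ with $C$ depending only on $a_1$ and the $L^\infty([0,T]\times\Y)$-norm of $|v_\cdot|$. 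Since $F_0\geq G_0$ forces $(H_0)^+=0$ and hence $\phi(0)=0$, Gronwall's inequality applied to the absolutely continuous, non-negative $\phi$ closes the argument.
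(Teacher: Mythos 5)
Your proposal is correct and follows essentially the same route as the paper: subtract to form $H_t=G_t-F_t$, use Lemma \ref{le:kyoto} to differentiate $t\mapsto\frac12\int|(H_t)^+|^2\,\d\mm_\Y$, test with $h=(H_t)^+$ after extending admissibility to non-negative $W^{1,2}$ functions by density, absorb the drift via Young, and conclude by Gronwall. The one point you flag as delicate (the time-dependent test function) is resolved in the paper exactly as you suggest, with the additional observation that the a.e.\ differentiability of $(F_t),(G_t)$ in $W^{-1,2}(\Y)$ (Radon--Nikodym property) makes the exceptional set of times independent of $h$.
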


\begin{proof} 
Recalling that $\Lip_{\bs,+}(\Y)$ is dense in the space of non-negative functions in $W^{1,2}(\Y)$ and integrating in time the inequalities defining super/sub-solutions, we see that such inequalities are valid for every $h\in W^{1,2}(\Y)$ non-negative. Then from the fact that $W^{-1,2}(\Y)$ has the Radon--Nikod\'ym property (because it is Hilbert) we see that $(F_t)$ seen as curve with values in $W^{-1,2}(\Y)$ must be differentiable at a.e.\ $t$ (see \cite[Chapter VII.6]{DiestelUhl77} for equivalent definitions of the Radon--Nikod\'ym property and \cite[Chapter III.3]{DiestelUhl77} for its validity in the case of Hilbert spaces). It is then clear that for every point of differentiability $t$, the inequality
\begin{equation}
\label{eq:16f}
\frac{\d}{\d t}\int h F_t\d\mm_\Y \geq -\int \la\nabla h,\nabla F_t\ra \d\mm_\Y + \int h\Big(a_1F_t + \langle \nabla F_t,v_t \rangle + a_2\Big)\d\mm_\Y
\end{equation}
holds for every $h \in W^{1,2}(\Y)$ non-negative, i.e.\ that the set of $t$'s for which \eqref{eq:16f} holds is independent on $h$. The analogous property holds for $(G_t)$.

Now observe that $H_t := G_t-F_t$ satisfies the assumptions of Lemma \ref{le:kyoto}, thus the function  $\Phi(t) := \frac12\int| H_t^+|^2\,\d\mm_Y$ is  absolutely continuous on $[0,T]$ and 
\[
\Phi'(t) = \int H_t^+ \ddt H_t \,\d\mm_Y\,,
\]
where the right-hand side is intended as the coupling of $\ddt H_t  \in W^{-1,2}(\Y)$ and the function $H_t^+ \in W^{1,2}(\Y)$. Fix $t$ which is a differentiability point of both $(F_t)$ and $(G_t)$ and pick $h :=H_t^+= (G_t-F_t)^+$ in \eqref{eq:16f} and in the analogous inequality for $(G_t)$ to obtain
\[
\begin{split}
\Phi'(t) & \leq \int-\langle\nabla H_t^+,\nabla H_t\rangle + H_t^+ \big(a_1H_t + \langle\nabla H_t,v_t\rangle\big)\,\d\mm_\Y\,.
\end{split}
\]
Now fix $T>0$, let $C_T := {\rm ess\,sup}_{t\in[0,T]}\||v_t|\|_{L^\infty}<\infty$ and use the trivial identities  $\langle\nabla f^+,\nabla f\rangle = |\nabla f^+|^2$, $f^+f=|f^+|^2$, and $f^+\nabla f = f^+\nabla f^+$ valid $\mm_\Y$-a.e.\ for every $f \in W^{1,2}(\Y)$, to obtain
\[
\begin{split}
\Phi'(t) & \leq \int -|\d H_t^+|^2 + a_1|H_t^+|^2 + |H_t^+||\d H^+_t|C_T \d\mm_\Y \leq \Big(a_1 + \frac{C_T^2}{2}\Big) \Phi(t)\qquad \textrm{a.e. } t \in (0,T)\,,
\end{split}
\]
where the second inequality follows from $|H_t^+||\d H^+_t|C_T \leq \frac12 |H_t^+|^2 C_T^2 + \frac12 |\d H^+_t|^2$. By Gr\"onwall's lemma we deduce that
\[
\Phi(s)\leq e^{(2a_1+C_T^2)(s-t)}\Phi(t)\qquad\forall t,s\in(0,T),\ t<s
\]
and since $\Phi$ is continuous at $t=0$ (by \eqref{eq:inclspazi} and our assumptions on $(F_t),(G_t)$) we can let $t\downarrow0$ and deduce that $\Phi(s)\leq e^{(a_1+C_T^2)(s)}\Phi(0)$ holds for every $s\in(0,T)$. Since $\Phi(0)=0$, the conclusion follows.
\end{proof}

In the forthcoming analysis it will be useful to keep in mind the following simple result:

\begin{Lemma}\label{le:stimebase}
Let $(\X,\sfd,\mm)$ be an  $\RCD(K,\infty)$ space with $K \in \R$, $\phi\in\testi \X$. Put 
\[
\phi_t:=\log\big(\h_t(e^\phi)\big)\,.
\]
Then $\phi_t\in\testi\X$ for every $t\geq 0$ with
\begin{equation}
\label{eq:dDphi}
\d\phi_t=\frac{\d \h_t(e^\phi)}{\h_t(e^\phi)}\qquad\text{and}\qquad\Delta \phi_t=\frac{\Delta\h_te^\phi}{\h_te^\phi}-\frac{|\nabla\h_te^\phi|^2}{|\h_te^\phi|^2}\,,
\end{equation}
and the estimates
\begin{subequations}
\begin{align}
\label{eq:boundlinfty}
\sup_{t\geq 0}\|\phi_t\|_\infty&\leq \|\phi\|_\infty\,,\\
\label{eq:linftygradlap}
\sup_{t\in[0,T]} \||\nabla\phi_t|^2\|_\infty+\|\Delta\phi_t\|_\infty &<\infty\,,\\
\label{eq:w12gradlap}
\sup_{t\in[0,T]} \||\nabla\phi_t|^2\|_{W^{1,2}}+\|\Delta\phi_t\|_{W^{1,2}}&<\infty\,,
\end{align}
\end{subequations}
hold for every $T>0$. Finally, the map $t\mapsto \phi_t$ belongs to $\AC_{\loc}([0,\infty),W^{1,2}(\X))$ with  derivative given by
\begin{equation}\label{eq:hjb-eta2}
\ddt\phi_t = |\nabla\phi_t|^2 + \Delta\phi_t  \qquad \textrm{for a.e. } t>0\,.
\end{equation}
\end{Lemma}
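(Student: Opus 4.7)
The plan is to reduce the entire statement to regularity properties of $u_t := \h_t(e^\phi)$ and then transfer those properties to $\phi_t = \log u_t$ via the chain rule. Since $\phi \in \testi\X$ is bounded, $e^\phi$ lies in $[e^{-\|\phi\|_\infty},e^{\|\phi\|_\infty}]$ and the same holds for $u_t$ by the maximum principle, which already gives \eqref{eq:boundlinfty}. To land inside the functional class $\testi\X$ (which involves $L^2$-membership), I would first introduce $\tilde g := e^\phi - 1$; this lies in $\testi\X$ by the algebra/chain-rule property of $\testi\X$ applied to the smooth function $z \mapsto e^z - 1$, which vanishes at $0$. Then $u_t = \h_t\tilde g + 1$, and a short check based on the commutation $\Delta\h_t = \h_t\Delta$, the Bakry-\'Emery estimate \eqref{eq:be}, the maximum principle and the $W^{1,2}$-contractivity of $\h_t$ shows that $\h_t$ preserves $\testi\X$, so $u_t - 1 \in \testi\X$ for every $t \geq 0$ with values in a compact subinterval of $(-1,\infty)$. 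Choosing any $\tilde\Phi \in C^\infty(\R)$ with $\tilde\Phi(0) = 0$ agreeing with $\log(1+\cdot)$ on that subinterval, the algebra property again yields $\phi_t = \tilde\Phi(u_t-1) \in \testi\X$.

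The formulas \eqref{eq:dDphi} then come directly from the chain rules $\d(\tilde\Phi\circ v) = \tilde\Phi'(v)\d v$ and $\Delta(\tilde\Phi\circ v) = \tilde\Phi'(v)\Delta v + \tilde\Phi''(v)|\nabla v|^2$ with $v = u_t - 1$, $\tilde\Phi'(u_t-1) = 1/u_t$, $\tilde\Phi''(u_t-1) = -1/u_t^2$, and $\d u_t = \d(u_t-1)$, $\Delta u_t = \Delta(u_t-1)$. For the uniform $L^\infty$ bounds \eqref{eq:linftygradlap}, I would estimate $|\nabla u_t|^2 \leq e^{-2Kt}\h_t|\nabla e^\phi|^2$ via \eqref{eq:be} together with $|\nabla e^\phi| = e^\phi|\nabla\phi| \in L^\infty$; the Laplacian bound comes from the identity $\Delta e^\phi = e^\phi(\Delta\phi + |\nabla\phi|^2) \in L^\infty$ and the max principle applied to $\Delta u_t = \h_t\Delta e^\phi$, combined with $u_t \geq e^{-\|\phi\|_\infty}$.

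For the $W^{1,2}$ bounds \eqref{eq:w12gradlap}, the natural tool for $\||\nabla\phi_t|^2\|_{W^{1,2}}$ is the Bochner-type inequality from the preliminaries applied to $\phi_t \in \testi\X$, which reduces the problem to uniform control of $\||\nabla\phi_t|\|_{L^\infty}$, $\||\nabla\phi_t|\|_{L^2}$ and $\||\nabla\Delta\phi_t|\|_{L^2}$ on $[0,T]$. The first two are handled as in the previous paragraph, combined with the integrated Bakry-\'Emery bound $\int|\nabla u_t|^2\,\d\mm \leq e^{-2Kt}\int|\nabla\tilde g|^2\,\d\mm$. The third is the main obstacle and requires differentiating the explicit expression for $\Delta\phi_t$ in \eqref{eq:dDphi} via the Leibniz rule: one obtains a sum of terms involving $\nabla\Delta u_t$, $\nabla|\nabla u_t|^2$ and products of already controlled quantities, where $\|\nabla\Delta u_t\|_{L^2}$ is bounded by \eqref{eq:be} applied to $\Delta\tilde g \in W^{1,2}$ and $\|\nabla|\nabla u_t|^2\|_{L^2}$ by the Bochner inequality applied to $\h_t\tilde g \in \testi\X$, each factor staying uniformly bounded for $t \in [0,T]$. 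The same expansion also yields the uniform $W^{1,2}$-bound on $\Delta\phi_t$ itself.

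Finally, for \eqref{eq:hjb-eta2} I would start from the standard heat-flow fact $u_\cdot - 1 \in AC_{loc}([0,\infty),W^{1,2}(\X))$ with $\ddt u_t = \Delta u_t$ in $L^2$ (with $W^{1,2}$-continuity at $t = 0$ following from weak $W^{1,2}$-convergence combined with $\|\nabla\h_t\tilde g\|_{L^2} \to \|\nabla\tilde g\|_{L^2}$). The chain rule for composition with the smooth function $\tilde\Phi$ then gives $\ddt\phi_t = \tilde\Phi'(u_t-1)\ddt u_t = \Delta u_t/u_t$, which by \eqref{eq:dDphi} coincides with $\Delta\phi_t + |\nabla\phi_t|^2$. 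Upgrading from $AC_{loc}$ in $L^2$ to $AC_{loc}$ in $W^{1,2}$ is a byproduct of the uniform $W^{1,2}$-bound on $\ddt\phi_t = \Delta u_t/u_t$, which is provided by exactly the same Leibniz-rule estimate used in the previous step. The decisive technical point throughout is precisely this uniform $W^{1,2}$-control, as it requires combining the algebra structure of $\testi\X$, the Bochner-Hessian inequality and the $\h_t$-contraction properties in a consistent way; once this is in place everything else reduces to routine chain-rule manipulations.
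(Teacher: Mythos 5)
Your proposal is correct and follows essentially the same route as the paper: the formulas \eqref{eq:dDphi} by direct chain-rule computation, the $L^\infty$ bounds via Bakry-\'Emery and the maximum principle together with $\h_t e^\phi \geq e^{\inf\phi}$, the $W^{1,2}$ bounds via the Leibniz expansion of $\d\Delta\phi_t$ combined with the Bochner estimate, and \eqref{eq:hjb-eta2} from $\ddt\h_t = \Delta\h_t$ plus the chain rule. The only organizational difference is that you establish $\phi_t\in\testi\X$ upfront (via $e^\phi-1\in\testi\X$, invariance of $\testi\X$ under $\h_t$, and a truncated logarithm), whereas the paper obtains the $L^2$-membership of $\phi_t$ only at the end from the integrated form of \eqref{eq:hjb-eta2}; your subtraction of the constant $1$ is a clean way to handle the case $\mm(\X)=\infty$ and is fully consistent with the paper's argument.
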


\begin{proof} The continuity of $\phi_t$ follows from \eqref{eq:C0reg}. Formulas \eqref{eq:dDphi} are obvious by direct computation and so is
\begin{equation}
\label{eq:dddphi}
\d\Delta\phi_t=\frac{\d\Delta\h_te^\phi}{\h_te^\phi}-\frac{\Delta\h_te^\phi\,\d\h_te^\phi}{|\h_te^\phi|^2}-\frac{\d|\nabla\h_te^\phi|^2}{|\h_te^\phi|^2}+2\frac{\h_te^\phi|\nabla\h_te^\phi|^2\d\h_te^\phi}{|\h_te^\phi|^3}\,.
\end{equation}
Then all the stated estimates follow from the  weak maximum principle for the heat flow, the uniform bound $e^\phi\geq e^{\inf\phi}>0$ and 
\[
\begin{split}
\|\d \h_t(e^\phi)\|_\infty& \stackrel{\eqref{eq:be}}\leq e^{-Kt}\|\d e^\phi\|_\infty\leq e^{-Kt}e^{\sup \phi}\|\d \phi\|_\infty\,,\\
\|\Delta\h_t e^\phi\|_\infty&\leq \|\Delta e^\phi\|_\infty =\|e^\phi(\Delta \phi+|\d \phi|^2)\|_\infty\leq e^{\sup \phi}(\|\Delta \phi\|_\infty+\|\d \phi\|_\infty^2)\,.
\end{split}
\]
For the last claim we pick a finite Borel measure $\mm'\leq\mm$ with the same negligible sets of $\mm$ and we note that from \eqref{eq:calorel2} and \eqref{eq:dDphi} we have $(\phi_t)\in C([0,\infty),L^2(\X,\mm'))\cap \AC_{\loc}((0,\infty),L^2(\X,\mm'))$ with derivative given by the formula \eqref{eq:hjb-eta2}. Integrating we obtain
\[
\phi_s-\phi_t=\int_t^s |\nabla\phi_r|^2 + \Delta\phi_r\,\d r\,, \qquad\forall 0\leq t<s\,,
\]
where the integrals are in the sense of Bochner in the space $L^2(\X,\mm')$. Thus this formula is valid also $\mm'$-, and hence $\mm$-, almost everywhere, so that the estimates \eqref{eq:w12gradlap} and the fact that $\phi_0=\phi\in W^{1,2}(\X,\sfd,\mm)$ give the last claim and, in particular, the fact that $\phi_t\in L^2(\X,\mm)$ thus establishing $\phi_t\in\testi\X$.
\end{proof}
We are ready to turn to the main result of this section:
\begin{Proposition}\label{pro:kyoto}
Let $(\X,\sfd,\mm)$ be an $\RCD(K,\infty)$ space with $K \in \R$, $\phi\in\testi\X$ and put $\phi_t := \log(\h_te^\phi)$ for all $t \geq 0$. Then for every $t \geq 0$ it holds
\begin{align}
\label{eq:lipcontr}
\|\nabla\phi_t\|_\infty &\leq e^{-Kt}\|\nabla\phi_0\|_{L^\infty(\X)}\,, \\
\label{eq:lapcontr}
\|(\Delta\phi_t)^-\|_\infty &\leq \|(\Delta\phi_0)^-\|_{L^\infty(\X)}+ 2tK^-e^{2K^-t}\|\nabla\phi_0\|^2_{L^\infty(\X)}\,,
\end{align}
where $K^- := \max\{0,-K\}$.
\end{Proposition}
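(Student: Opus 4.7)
The strategy is to derive weak PDE inequalities for $|\nabla\phi_t|^2$ and $\Delta\phi_t$ by differentiating the identity $\partial_t\phi_t=|\nabla\phi_t|^2+\Delta\phi_t$ from \eqref{eq:hjb-eta2} and invoking Bochner's inequality \eqref{eq:bochhess}; the two bounds then follow from Proposition \ref{pro:3} upon comparing with explicit, spatially constant profiles. All required time/space regularity for these manipulations is supplied by Lemma \ref{le:stimebase}: in particular both $|\nabla\phi_t|^2$ and $\Delta\phi_t$ lie in $L^\infty([0,T],L^\infty\cap W^{1,2}(\X))$, and the map $t\mapsto\phi_t$ belongs to $AC_{loc}([0,\infty),W^{1,2}(\X))$ with $\mm$-a.e.\ derivative $|\nabla\phi_t|^2+\Delta\phi_t$.

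For \eqref{eq:lipcontr}, testing against $h\in\Lip_{\bs,+}(\X)$ and using the chain rule in $W^{1,2}$ gives
\[
\ddt\int h|\nabla\phi_t|^2\,\d\mm = 2\int h\langle\nabla\phi_t,\nabla|\nabla\phi_t|^2\rangle\,\d\mm + 2\int h\langle\nabla\phi_t,\nabla\Delta\phi_t\rangle\,\d\mm.
\]
Bochner's inequality \eqref{eq:bochhess} applied with $f=\phi_t$ bounds the second term from above by $-\int\langle\nabla h,\nabla|\nabla\phi_t|^2\rangle\,\d\mm-2K\int h|\nabla\phi_t|^2\,\d\mm$, so $G_t:=|\nabla\phi_t|^2$ is a weak sub-solution of $\partial_t u=\Delta u+\langle\nabla u,2\nabla\phi_t\rangle-2Ku$. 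The (spatially constant) function $F_t:=e^{-2Kt}\|\nabla\phi_0\|_\infty^2$ solves the same equation and satisfies $|\nabla\phi_0|^2\leq F_0$ $\mm$-a.e., so Proposition \ref{pro:3} yields \eqref{eq:lipcontr}.

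For \eqref{eq:lapcontr}, I start from $\int h\Delta\phi_t\,\d\mm=-\int\langle\nabla h,\nabla\phi_t\rangle\,\d\mm$ and differentiate in time to get
\[
\ddt\int h\Delta\phi_t\,\d\mm=-\int\langle\nabla h,\nabla|\nabla\phi_t|^2\rangle\,\d\mm-\int\langle\nabla h,\nabla\Delta\phi_t\rangle\,\d\mm.
\]
Using Bochner in the opposite direction, $-\int\langle\nabla h,\nabla|\nabla\phi_t|^2\rangle\,\d\mm\geq 2\int h\langle\nabla\phi_t,\nabla\Delta\phi_t\rangle\,\d\mm+2K\int h|\nabla\phi_t|^2\,\d\mm$, shows that $F_t:=\Delta\phi_t$ is a weak super-solution of $\partial_t u=\Delta u+\langle\nabla u,2\nabla\phi_t\rangle+2K|\nabla\phi_t|^2$. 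Inserting the gradient estimate just established one has $2K|\nabla\phi_t|^2\geq a_2(t):=-2K^-e^{2K^-t}\|\nabla\phi_0\|_\infty^2$ pointwise, so $F_t$ is a weak super-solution with forcing $a_2(t)$. The spatially constant $G_t:=-\|(\Delta\phi_0)^-\|_\infty+\int_0^t a_2(s)\,\d s$ solves the same equation, and $G_0\leq\Delta\phi_0$ pointwise; Proposition \ref{pro:3} then gives $\Delta\phi_t\geq G_t$, and combining with the elementary inequality $e^{2K^-t}-1\leq 2K^-t\,e^{2K^-t}$ one obtains \eqref{eq:lapcontr}.

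The main obstacle is a mismatch of function spaces: the comparison profiles $F_t,G_t$ above are constant in space and hence fail to belong to $W^{1,2}(\X)$ when $\mm(\X)=\infty$, as would be strictly required by Proposition \ref{pro:3}. Since, however, only the difference $G_t-F_t$ actually enters the argument underlying that proposition (via Lemma \ref{le:kyoto}), this can be bridged by a variant of Lemma \ref{le:kyoto} tailored to bounded differences whose positive part is in $W^{1,2}$, or equivalently by running the argument against a finite reference measure $\tilde\mm\leq\mm$ with the same null sets and transferring the pointwise estimate back to $\mm$.
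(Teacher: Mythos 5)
Your proof is correct and follows essentially the same route as the paper's: the same sub/super-solution structure obtained by differentiating \eqref{eq:hjb-eta2} and applying Bochner's inequality \eqref{eq:bochhess}, then comparison via Proposition \ref{pro:3} against the same spatially constant profiles (the paper freezes the forcing at the constant $a_2=-2K^-e^{2K^-T}\||\nabla\phi_0|\|^2_{L^\infty}$ on $[0,T]$ instead of using your time-dependent $a_2(t)$, an immaterial difference). The function-space mismatch you flag is genuine --- the paper simply asserts that the constant profiles belong to \eqref{eq:spazi2}, which fails when $\mm(\X)=\infty$ --- and your first proposed remedy (running Lemma \ref{le:kyoto} on the difference, whose positive part is dominated in $W^{1,2}$ by $|\nabla\phi_t|^2$, resp.\ $(\Delta\phi_t)^-$) is the clean one, whereas the finite-reference-measure alternative needs extra care since replacing $\mm$ changes the Sobolev space, the Laplacian and the Bochner inequality.
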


\begin{proof} We prove first \eqref{eq:lipcontr} and then \eqref{eq:lapcontr}.

\noindent{\underline{Proof of \eqref{eq:lipcontr}.}} Put $G_t := |\nabla\phi_t|^2$ and note that as a result of Lemma \ref{le:stimebase} above (and the trivial inclusion $\AC([0,T],L^2(\X))\subset \AC([0,T],W^{-1,2}(\X))$) we have that $(G_t)$ belongs to the space \eqref{eq:spazi2} for every $T>0$.  Moreover, from \eqref{eq:hjb-eta2} we get that the derivative of $(G_t)$, intended in the space $L^2(\X)$, is given by
\[
\ddt G_t = 2\langle\nabla\phi_t,\nabla G_t\rangle + 2\langle\nabla\phi_t, \nabla\Delta\phi_t\rangle \qquad \textrm{a.e. } t>0\,.
\]
Applying Bochner inequality \eqref{eq:bochhess} to $f:=\phi_t$ we deduce that for every $h\in \Lip_{\bs,+}(\X)$  it holds
\[
\ddt\int h G_t\,\d\mm \leq \int -\langle\nabla h,\nabla G_t\rangle + 2h\Big(\la\nabla\phi_t,\nabla G_t\ra-K G_t\Big)\,\d\mm,\qquad \textrm{a.e. } t>0\,,
\]
thus showing that $(G_t)$ is a weak subsolution of \eqref{eq:persupsub} in the sense of Proposition \ref{pro:3} with
\begin{equation}
\label{eq:data1}
a_1 = -2K \qquad a_2 = 0 \qquad v_t = 2\nabla\phi_t\,.
\end{equation}
On the other hand, it is clear that the function $F_t(x):= e^{-2Kt}\|G_0\|_{L^\infty(\X)}$ belongs to the space \eqref{eq:spazi2} for every $T>0$ and is a super-solution of \eqref{eq:persupsub} with the data \eqref{eq:data1}.

Since $F_0\geq G_0$ holds $\mm$-a.e.\ and the bound \eqref{eq:linftygradlap} ensures that the $v_t$'s defined above satisfy \eqref{eq:boundvt}, the claim \eqref{eq:lipcontr} follows from Proposition \ref{pro:3}.

\noindent{\underline{Proof of \eqref{eq:lapcontr}.}}  Fix $T>0$,  put $F_t := \Delta\phi_t$ and note that \eqref{eq:w12gradlap} ensures that $(F_t)$ belongs to $ L^\infty([0,T],W^{1,2}(\X)) $. Also, for every $g\in W^{1,2}(\X)$ with $\|g\|_{W^{1,2}}\leq 1$, putting for brevity $f:=\phi_s-\phi_t$ for fixed $0<t<s$, we have
\[
\begin{split}
\int g (F_s-F_t)\,\d\mm&=-\int\nabla  g\cdot\nabla f\,\d\mm\leq \||\nabla g|\|_{L^{2}} \|f\|_{W^{1,2}}\leq  \|f\|_{W^{1,2}}\,.
\end{split}
\]
Taking the $\sup$ in $g$ as above we conclude that $\|F_s-F_t \|_{W^{-1,2}}\leq \|\phi_s-\phi_t \|_{W^{1,2}}$, thus  recalling the last claim in Lemma \ref{le:stimebase}   we conclude that $(F_t)$ is in \eqref{eq:spazi2}. Also, for every $h \in \Lip_{\bs,+}(\X)$ we have
\[
\begin{split}
\frac{\d}{\d t}\int hF_t\,\d\mm&=-\frac{\d}{\d t}\int\nabla h\cdot\nabla\phi_t\,\d\mm\\
\text{(by \eqref{eq:hjb-eta2})}\qquad&= -\int\nabla h\cdot\nabla(|\nabla\phi_t|^2)+\nabla h\cdot\nabla F_t\,\d\mm\\
\text{(by \eqref{eq:bochhess})}\qquad&\geq \int 2h\big(\nabla\phi_t\cdot\nabla F_t+K|\nabla\phi_t|^2 \big)+\nabla h\cdot\nabla F_t\,\d\mm\\
\text{(by \eqref{eq:lipcontr})}\qquad&\geq \int 2h\big(\nabla\phi_t\cdot\nabla F_t-K^-e^{2K^-T}\||\nabla\phi_0|\|^2_{L^\infty} \big)+\nabla h\cdot\nabla F_t\,\d\mm\,,
\end{split}
\]
for a.e.\ $t\in[0,T]$.  In other words, $(F_t)$ is a weak supersolution of \eqref{eq:persupsub} on $[0,T]$ with 
\[
a_1 = 0 \qquad a_2 = -2K^-e^{2K^-T}\||\nabla\phi_0|\|^2_{L^\infty(\X)} \qquad v_t = 2\nabla\phi_t\,.
\]
On the other hand, it is easily verified that the function
\[
G_t(x) := -\|(\Delta\phi_0)^-\|_{L^\infty(\X)} - 2K^-te^{2K^-T}\||\nabla\phi_0|\|^2_{L^\infty(\X)}
\]
belongs to \eqref{eq:spazi2}  and  is a weak (sub)solution of \eqref{eq:persupsub}. As $F_0 \geq G_0$ $\mm$-a.e., by Proposition \ref{pro:3} we conclude that $F_t \geq G_t$ $\mm$-a.e.\ for every $t \in [0,T]$ and in particular for $t:=T$.
\end{proof}

%

\section{Viscous approximation of the Hamilton--Jacobi semigroup}

Here we prove that on general $\RCD(K,\infty)$ spaces, solutions of the Hamilton--Jacobi--Bellman equation converge, when $\eps\downarrow0$, to solutions of the Hamilton--Jacobi equation. The starting point is the following simple corollary of Proposition \ref{pro:kyoto}.

\begin{Corollary}\label{cor:easy}
Let $(\X,\sfd,\mm)$ be an $\RCD(K,\infty)$ space with $K \in \R$, $\varphi \in \testi\X$ and set
\begin{equation}\label{eq:hjbsemigroup}
\varphi_t^\eps := \eps\log\h_{\eps t/2}(e^{\varphi/\eps}), \qquad t \geq 0,\, \eps > 0\,.
\end{equation}
Then for all $t > 0$ and $\eps \in (0,1)$ it holds
\begin{subequations}
\begin{align}
\label{eq:inftycontr-exp}
|\varphi_t^\eps|& \leq \|\varphi\|_{L^\infty(\X)}\,, \\
\label{eq:lipcontr-exp}
|\d\varphi^\eps_t|&\leq \Lip(\varphi_t^\eps) \leq e^{-K\eps t/2}\| |\nabla\varphi| \|_{L^\infty(\X)}\,,\\ 
\label{eq:lapcontr-exp}
(\Delta\varphi_t^\eps)^-& \leq \|(\Delta\varphi)^-\|_{L^\infty(\X)} + K^-te^{K^-\eps t}\| |\nabla\varphi| \|^2_{L^\infty(\X)}\,, 
\end{align}
\end{subequations}
where $K^- := \max\{0,-K\}$. Also, $t\mapsto\varphi^\eps_t$ is in $\AC_{\loc}([0,\infty),W^{1,2}(\X))$ with
\begin{equation}
\label{eq:HJBslow}
\frac\d{\d t}\varphi^\eps_t=\frac12|\nabla\varphi^\eps_t|^2+\frac\eps2\Delta\varphi^\eps_t\,.
\end{equation}
Furthermore, for every $B\subset\X$ bounded and $T>0$ we have
\begin{equation}\label{eq:lapcontr-int2}
\sup_{\substack{\eps\in(0,1) \\ t\in[0,T]}}\int_B |\Delta\varphi^\eps_t|\,\d\mm <\infty\,.
\end{equation}
\end{Corollary}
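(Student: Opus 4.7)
The plan is to reduce the statement to Proposition \ref{pro:kyoto} and Lemma \ref{le:stimebase} applied to the rescaled function $\phi := \varphi/\eps \in \testi\X$. Setting $\phi_s := \log(\h_s e^\phi)$, one has the identity $\varphi^\eps_t = \eps\,\phi_{\eps t/2}$, from which the scaling rules $\nabla \varphi^\eps_t = \eps\,\nabla\phi_{\eps t/2}$ and $\Delta\varphi^\eps_t = \eps\,\Delta\phi_{\eps t/2}$ immediately follow. All the required estimates will come by combining these identities with the bounds already established.

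First I will handle the three pointwise estimates. For \eqref{eq:inftycontr-exp} I invoke \eqref{eq:boundlinfty} to get $\|\phi_s\|_\infty \leq \|\phi\|_\infty = \eps^{-1}\|\varphi\|_\infty$, and then multiply by $\eps$. For the gradient bound \eqref{eq:lipcontr-exp}, estimate \eqref{eq:lipcontr} at $s = \eps t/2$ yields $\|\nabla\phi_{\eps t/2}\|_\infty \leq e^{-K\eps t/2}\eps^{-1}\||\nabla\varphi|\|_\infty$; multiplying by $\eps$ controls $\||\d\varphi^\eps_t|\|_\infty$, and the Sobolev-to-Lipschitz property, applicable since $\varphi^\eps_t \in \testi\X$ has bounded gradient, promotes this to a bound on $\Lip(\varphi^\eps_t)$. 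For \eqref{eq:lapcontr-exp}, applying \eqref{eq:lapcontr} at $s = \eps t/2$ and using $\|(\Delta\phi)^-\|_\infty = \eps^{-1}\|(\Delta\varphi)^-\|_\infty$ together with $\|\nabla\phi\|^2_\infty = \eps^{-2}\||\nabla\varphi|\|^2_\infty$, the factor $2\cdot(\eps t/2)\cdot\eps^{-2} = t\eps^{-1}$ combines with the exponent $K^-\eps t$; multiplication by $\eps$ then produces exactly the stated bound.

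For the PDE \eqref{eq:HJBslow}, Lemma \ref{le:stimebase} asserts $s \mapsto \phi_s \in AC_{loc}([0,\infty),W^{1,2}(\X))$ with $\partial_s\phi_s = |\nabla\phi_s|^2 + \Delta\phi_s$ a.e. The linear time-rescaling $s = \eps t/2$ together with multiplication by $\eps$ preserves absolute continuity into $W^{1,2}(\X)$, and the chain rule gives
\[
\ddt\varphi^\eps_t \;=\; \eps \cdot \tfrac{\eps}{2}\bigl(|\nabla\phi_{\eps t/2}|^2 + \Delta\phi_{\eps t/2}\bigr) \;=\; \tfrac12|\nabla\varphi^\eps_t|^2 + \tfrac\eps2\Delta\varphi^\eps_t,
\]
where in the last step one uses $|\nabla\varphi^\eps_t|^2 = \eps^2|\nabla\phi_{\eps t/2}|^2$ and $\Delta\varphi^\eps_t = \eps\,\Delta\phi_{\eps t/2}$.

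The only slightly less automatic step is \eqref{eq:lapcontr-int2}, since the previous bounds only control the \emph{negative} part of $\Delta\varphi^\eps_t$. For a bounded $B$, I would fix $\bar x \in \X$ and $R>0$ with $B \subset B_R(\bar x)$, and pick a cutoff $\chi \in \Lip_{\bs}(\X)$ with $0 \leq \chi \leq 1$, $\chi \equiv 1$ on $B$, and $\supp(\chi) \subset B_{R+1}(\bar x)$. Using the decomposition $|\Delta\varphi^\eps_t| = \Delta\varphi^\eps_t + 2(\Delta\varphi^\eps_t)^-$ and the fact that $\varphi^\eps_t \in \testi\X \subset D(\Delta)$, integration by parts yields
\[
\int_B |\Delta\varphi^\eps_t|\,\d\mm \;\leq\; \int \chi |\Delta\varphi^\eps_t|\,\d\mm \;=\; -\int \la\nabla\chi,\nabla\varphi^\eps_t\ra\,\d\mm + 2\int \chi (\Delta\varphi^\eps_t)^-\,\d\mm.
\]
The first summand is bounded by $\||\nabla\chi|\|_{L^1}\||\d\varphi^\eps_t|\|_\infty$ and the second by $\|\chi\|_{L^1}\|(\Delta\varphi^\eps_t)^-\|_\infty$; both are uniform in $(\eps,t) \in (0,1)\times[0,T]$ by \eqref{eq:lipcontr-exp} and \eqref{eq:lapcontr-exp}, together with the finiteness of $\mm$ on bounded sets.
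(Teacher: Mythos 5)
Your proposal is correct and follows essentially the same route as the paper: the three pointwise bounds and the PDE are obtained by the rescaling $\varphi^\eps_t=\eps\,\phi_{\eps t/2}$ with $\phi=\varphi/\eps$ from Lemma \ref{le:stimebase} and Proposition \ref{pro:kyoto} (plus Sobolev-to-Lipschitz), and \eqref{eq:lapcontr-int2} is proved with the identical cutoff-and-integration-by-parts argument via $|\Delta\varphi^\eps_t|=\Delta\varphi^\eps_t+2(\Delta\varphi^\eps_t)^-$. The only cosmetic difference is that you spell out the scaling bookkeeping explicitly, which the paper leaves implicit.
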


\begin{proof}
The estimates \eqref{eq:inftycontr-exp}, \eqref{eq:lipcontr-exp}, \eqref{eq:lapcontr-exp} are direct consequences of \eqref{eq:boundlinfty}, \eqref{eq:lipcontr}, \eqref{eq:lapcontr} respectively (for \eqref{eq:lipcontr-exp} we also use the Sobolev-to-Lipschitz property and \eqref{eq:C0reg}). Similarly, \eqref{eq:HJBslow} follows from \eqref{eq:hjb-eta2}. For the bound \eqref{eq:lapcontr-int2} we pick $\eta\in \Lip_{\bs}^+(\X)$ identically 1 on $B$ and note that
\[
\begin{split}
\int_B|\Delta\varphi^\eps_t|\,\d\mm \leq \int \eta|\Delta\varphi^\eps_t|\,\d\mm = \int -\nabla \eta\cdot\nabla\varphi^\eps_t + 2\eta(\Delta\varphi^\eps_t)^-\,\d\mm\,,
\end{split}
\]
so that the conclusion follows from the estimates \eqref{eq:lipcontr-exp}, \eqref{eq:lapcontr-exp}.
\end{proof}

Let us then recall the following simple lemma valid on general metric measure spaces, whose proof can be found in \cite{GigTam21}.

\begin{Lemma}\label{lem:dermiste}
Let $(\Y,\sfd_\Y,\mm_\Y)$ be a complete separable metric measure space endowed with a non-negative measure $\mm_\Y$ which is finite on bounded sets and assume that $W^{1,2}(\Y)$ is separable. Let $\ppi$ be a test plan and $f\in W^{1,2}(\Y)$. Then $t\mapsto \int f\circ\e_t\,\d\ppi$ is absolutely continuous and 
\[
\Big|\frac\d{\d t}\int f\circ\e_t\,\d\ppi \Big|\leq \int |\d f|(\gamma_t)|\dot\gamma_t|\,\d\ppi(\gamma)\qquad{\rm a.e.}\ t\in[0,1]\,,
\]
where the exceptional set can be chosen to be independent on $f$.

Moreover, if $(f_t)\in \AC([0,1],L^2(\Y))\cap L^\infty([0,1],W^{1,2}(\Y))$, then the map $t\mapsto\int f_t\circ\e_t\,\d\ppi$ is also absolutely continuous and 
\[
\frac\d{\d s}\Big(\int f_s\circ\e_s\,\d\ppi\Big)\restr{s=t}=\int \big(\frac\d{\d s}f_s\restr{s=t}\big)\circ\e_t\,\d\ppi+\frac\d{\d s}\Big(\int f_t\circ\e_s\,\d\ppi\Big)\restr{s=t}\qquad {\rm a.e.}\ t\in[0,1]\,.
\]
\end{Lemma}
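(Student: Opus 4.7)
The plan is to reduce both assertions to the characterization of $W^{1,2}(\Y)$ via test plans, and to handle the $f$-independent exceptional set by exploiting separability. For fixed $f \in W^{1,2}(\Y)$, the defining property of Sobolev functions via test plans yields that $\ppi$-a.e.\ $\gamma$ satisfies $t \mapsto f(\gamma_t) \in AC([0,1])$ with $|(f\circ\gamma)'(t)| \leq |\d f|(\gamma_t)|\dot\gamma_t|$ for a.e.\ $t$. The combination of Cauchy-Schwarz, bounded compression of $\ppi$ and finite total kinetic energy places the map $(t,\gamma) \mapsto |\d f|(\gamma_t)|\dot\gamma_t|$ in $L^1(\d t \otimes \d\ppi)$, so Fubini gives
\[
\int f\circ\e_t\,\d\ppi - \int f\circ\e_s\,\d\ppi = \int_s^t \varphi_f(r)\,\d r, \qquad \varphi_f(r) := \int (f\circ\gamma)'(r)\,\d\ppi(\gamma),
\]
yielding absolute continuity of $g_f(t) := \int f\circ\e_t\,\d\ppi$ and the pointwise bound $|g_f'(t_0)| \leq \int |\d f|(\gamma_{t_0})|\dot\gamma_{t_0}|\,\d\ppi$ at every Lebesgue point of $\varphi_f$ and of $r \mapsto \int|\d f|(\gamma_r)|\dot\gamma_r|\,\d\ppi$.

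The delicate issue is to arrange an exceptional set $N$ independent of $f$. I would fix a countable $W^{1,2}$-dense family $\{f_n\}$, let $k(r) := \int|\dot\gamma_r|^2\,\d\ppi \in L^1([0,1])$, and take $N$ to consist of $\{k = \infty\}$, the non-Lebesgue points of $k$, and the exceptional sets produced by the previous step for each $f_n$. Given $f \in W^{1,2}(\Y)$ with $f_{n_k} \to f$ in $W^{1,2}$, for $t_0 \notin N$ the refined Cauchy-Schwarz estimate
\[
\Big|\frac{(g_f - g_{f_{n_k}})(t_0+h) - (g_f - g_{f_{n_k}})(t_0)}{h}\Big| \leq C^{1/2}\|\d(f - f_{n_k})\|_{L^2(\mm)}\Big(\frac{1}{h}\int_{t_0}^{t_0+h} k(r)\,\d r\Big)^{1/2}
\]
— derived from Cauchy-Schwarz on $\ppi$, bounded compression and the pointwise bound $||\d f| - |\d g|| \leq |\d(f-g)|$ — has $\limsup$ as $h\to 0$ bounded by $C^{1/2}\|\d(f-f_{n_k})\|_{L^2}k(t_0)^{1/2}$. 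Applied with $f_{n_j} - f_{n_k}$ in place of $f - f_{n_k}$, the same estimate shows $\{g_{f_{n_k}}'(t_0)\}_k$ is Cauchy in $\R$; combining these facts forces differentiability of $g_f$ at $t_0$ with $g_f'(t_0) = \lim_k g_{f_{n_k}}'(t_0)$. Passing to the limit in $|g_{f_{n_k}}'(t_0)| \leq \int|\d f_{n_k}|(\gamma_{t_0})|\dot\gamma_{t_0}|\,\d\ppi$ via one more Cauchy-Schwarz application (plus $L^2(\mm) \to L^2(\mu_{t_0})$ closeness through bounded compression) delivers the $f$-uniform bound. This is the heart of the proof; the remaining pieces are relatively routine.

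For the second part I would decompose
\[
\frac{1}{h}\Big[\int f_{t+h}\circ\e_{t+h}\,\d\ppi - \int f_t\circ\e_t\,\d\ppi\Big] = \int\frac{f_{t+h}-f_t}{h}\circ\e_{t+h}\,\d\ppi + \frac{1}{h}\Big[\int f_t\circ\e_{t+h}\,\d\ppi - \int f_t\circ\e_t\,\d\ppi\Big].
\]
The second summand converges to $\ddt\big(\int f_t\circ\e_s\,\d\ppi\big)|_{s=t}$ by the first part applied to the fixed function $f_t$, and the $f$-independence of $N$ is exactly what makes this legitimate at a.e.\ $t$. The first summand is compared with $\int\dot f_t\circ\e_t\,\d\ppi$: the discrepancy splits into $\int[(f_{t+h}-f_t)/h - \dot f_t]\circ\e_{t+h}\,\d\ppi$, controlled by $C^{1/2}\|(f_{t+h}-f_t)/h - \dot f_t\|_{L^2(\mm)} \to 0$ at points of $L^2$-differentiability of $(f_t)$, plus $\int\dot f_t\circ\e_{t+h}\,\d\ppi - \int\dot f_t\circ\e_t\,\d\ppi$, which vanishes because $s \mapsto (\e_s)_*\ppi$ is narrowly continuous with uniform $C\mm$ upper bound, so approximating $\dot f_t \in L^2(\mm)$ by $C_b$ functions together with bounded compression yields continuity of $s \mapsto \int\dot f_t\,\d(\e_s)_*\ppi$ at $s = t$.
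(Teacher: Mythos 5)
The paper does not actually prove this lemma but refers to \cite{GigTam21} for it, and your argument is correct and follows essentially the same route as the proof given there: a.e.\ absolute continuity of $t\mapsto f(\gamma_t)$ along test-plan curves plus Fubini for fixed $f$, a countable $W^{1,2}$-dense family combined with the Cauchy--Schwarz/bounded-compression estimate to make the exceptional set independent of $f$, and the add-and-subtract decomposition for the Leibniz rule. The only point you leave implicit is the absolute continuity of $t\mapsto\int f_t\circ\e_t\,\d\ppi$ in the second part, which is indeed routine from the estimates you already have, as the two error terms are dominated by integrals of the $L^1$ functions $r\mapsto\|\tfrac{\d}{\d r}f_r\|_{L^2}$ and $r\mapsto \sup_u\||\d f_u|\|_{L^2}\,k(r)^{1/2}$.
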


We then have the following key result, whose proof is inspired by the one of \cite[Proposition 5.4]{GigTam21}.

%

\begin{Proposition}\label{pro:from-hjb-to-hopflax}
Let $(\X,\sfd,\mm)$ be an $\RCD(K,\infty)$ space with $K \in \R$ and $\varphi \in \testi\X$. Define  $\varphi_t^\eps$ as in \eqref{eq:hjbsemigroup}. Then 
\begin{equation}\label{eq:hjb-hopflax}
\lim_{\eps \downarrow 0}\varphi_t^\eps(x)  = \sup_{y \in \X}\Big\{\varphi(y) - \frac{\sfd^2(x,y)}{2t}\Big\}\qquad\forall t>0,\ x \in \X\,.
\end{equation}
Moreover, for every $t\geq 0$ and $\eta\in L^1\cap L^\infty\cap D(\Delta)$ non-negative with $\Delta\eta\in L^1(\X)$ we have
\begin{equation}
\label{eq:lapqt}
\int Q_t (-\varphi)\Delta\eta\,\d\mm\leq C(t,\varphi)\int \eta\,\d\mm\,,
\end{equation}
where
\begin{equation}
\label{eq:ctp}
C(t,\varphi):=\|(\Delta\varphi)^-\|_{L^\infty(\X)} + K^-t\| |\nabla\varphi| \|^2_{L^\infty(\X)}
\end{equation}
and $K^- := \max\{0,-K\}$, and
\begin{equation}
\label{eq:same}
\lip\, Q_t(-\varphi)=|\d Q_t(-\varphi)|\qquad \mm \times \mathcal L^1\textrm{-a.e. } (x,t) \in \X \times(0,+\infty)\,.
\end{equation}
\end{Proposition}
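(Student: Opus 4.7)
The plan is to establish the pointwise convergence \eqref{eq:hjb-hopflax} first, then derive the weak Laplacian bound \eqref{eq:lapqt} by passage to the limit in \eqref{eq:lapcontr-exp}, and finally \eqref{eq:same} from the resulting regularity of $Q_t(-\varphi)$. The heart of the argument is \eqref{eq:hjb-hopflax}, which I split into a lower and an upper bound: the former via a test-plan computation along $W_2$-geodesics, the latter via the Regular Lagrangian Flow driven by $\nabla\varphi^\eps_{t-s}$ from Theorem \ref{thm:RLF}. Both arguments crucially exploit the gradient and one-sided Laplacian estimates from Corollary \ref{cor:easy}.

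For the lower bound, I fix $y\in\X$ and consider the normalised restrictions $\mu_0^n:=\mm(B_{1/n}(x))^{-1}\mm\llcorner B_{1/n}(x)$ and $\mu_1^n$ analogously centered at $y$; Theorem \ref{thm:bm} yields the unique $W_2$-geodesic between them, whose lift $\ppi_n$ is a test plan thanks to \eqref{eq:linftyrcd}. Setting $G^\eps_n(s):=\int \varphi^\eps_{t-s}\circ\e_s\,d\ppi_n$, the combination of Lemma \ref{lem:dermiste}, the HJB equation \eqref{eq:HJBslow}, and Young's inequality yields
\[
G^\eps_n(t)-G^\eps_n(0)\leq\frac{W_2^2(\mu_0^n,\mu_1^n)}{2t}+\frac{\eps}{2}\iint_0^t(\Delta\varphi^\eps_{t-s})^-\,d(\e_s)_*\ppi_n\,ds,
\]
where the second term is bounded uniformly in $n$ by $\eps t\,C(t,\varphi)/2$ via \eqref{eq:lapcontr-exp} and the fact that $(\e_s)_*\ppi_n$ is a probability measure. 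Passing $n\to\infty$ using \eqref{eq:C0reg}, continuity of $\varphi$, and $W_2$-metrization delivers $\varphi(y)-\varphi^\eps_t(x)\leq \sfd^2(x,y)/(2t)+\eps t\,C(t,\varphi)/2$; letting $\eps\downarrow 0$ and taking the supremum over $y$ yields the desired lower bound.

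For the upper bound, I apply Theorem \ref{thm:RLF} to the time-dependent vector field $v_s:=\nabla\varphi^\eps_{t-s}$, which lies in $L^1([0,t],W^{1,2}_C(T\X))$ via \eqref{eq:w12c} and Lemma \ref{le:stimebase} and whose divergence has uniformly bounded negative part by \eqref{eq:lapcontr-exp}. Along the resulting RLF $F_\cdot$, the chain rule combined with \eqref{eq:HJBslow} and \eqref{eq:quants} gives, for $\mm$-a.e.\ $z$,
\[
\varphi^\eps_t(z)=\varphi(F_t(z))-\tfrac12\int_0^t|\nabla\varphi^\eps_{t-s}|^2(F_s(z))\,ds+\tfrac{\eps}{2}\int_0^t\Delta\varphi^\eps_{t-s}(F_s(z))\,ds,
\]
and Cauchy--Schwarz together with \eqref{eq:quants} absorbs the first integral into $-Q_t(-\varphi)(z)$, leaving a residual $\tfrac{\eps}{2}\int_0^t\Delta\varphi^\eps_{t-s}(F_s(z))\,ds$. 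The main technical obstacle is controlling this residual: since $(\Delta\varphi^\eps_{t-s})^+$ is not uniformly bounded, I would average the inequality against a mollified initial density $\rho_0^n$ approximating $\delta_x$, use the continuity equation for the pushforward density $\rho_s^n$ to rewrite $\iint\Delta\varphi^\eps_{t-s}\,\rho_s^n\,d\mm\,ds$ as $-\iint\nabla\rho_s^n\cdot\nabla\varphi^\eps_{t-s}\,d\mm\,ds$, and control the latter by Cauchy--Schwarz using \eqref{eq:lipcontr-exp}, carefully balancing the mollification scale with $\eps\downarrow 0$.

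Claim \eqref{eq:lapqt} then follows by testing \eqref{eq:lapcontr-exp} against $\eta$, using the symmetry of $\Delta$ (valid since $\varphi^\eps_t,\eta\in D(\Delta)$) to rewrite $\int\eta\,\Delta\varphi^\eps_t\,d\mm=\int\varphi^\eps_t\,\Delta\eta\,d\mm$, and passing to the limit by dominated convergence using \eqref{eq:inftycontr-exp}, the integrability of $\Delta\eta$, and the already-established \eqref{eq:hjb-hopflax}. Finally, \eqref{eq:same} is a consequence of the Lipschitz bound inherited by $Q_t(-\varphi)$ from \eqref{eq:lipcontr-exp} combined with the distributional Laplacian bound \eqref{eq:lapqt}, which places $Q_t(-\varphi)$ in the scope of the standard $\RCD$ identification of the local Lipschitz constant with the minimal weak upper gradient for Lipschitz functions whose distributional Laplacian is bounded below by a constant.
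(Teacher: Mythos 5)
Your overall architecture coincides with the paper's: lower bound via the lifted $W_2$-geodesic between normalized restrictions to small balls, upper bound via the Regular Lagrangian Flow of $\nabla\varphi^\eps_{t-s}$, and \eqref{eq:lapqt} by integrating by parts against $\eta$ and passing to the limit. The lower bound and the derivation of \eqref{eq:lapqt} are fine (your way of bounding the viscous term by $\tfrac{\eps t}{2}\|(\Delta\varphi^\eps)^-\|_\infty$ is even slightly leaner than the paper's). However, two steps do not hold up as written.

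First, in the upper bound your treatment of the residual $\tfrac\eps2\int_0^t\Delta\varphi^\eps_{t-s}(F_s(z))\,\d s$ is unjustified. Rewriting $\iint\Delta\varphi^\eps_{t-s}\,\rho^n_s\,\d\mm\,\d s$ as $-\iint\nabla\rho^n_s\cdot\nabla\varphi^\eps_{t-s}\,\d\mm\,\d s$ requires $\rho^n_s\in W^{1,2}(\X)$ with quantitative control on $\||\nabla\rho^n_s|\|_{L^2}$; but the RLF theory only provides an $L^\infty$ (bounded-compression) bound on the pushforward density, not Sobolev regularity, and no gradient bound is available. The ``balancing of the mollification scale with $\eps$'' is also unnecessary: the correct order of limits is $\eps\downarrow0$ first at \emph{fixed} ball radius $r$, and then the residual is killed directly by \eqref{eq:lapcontr-int2} together with \eqref{eq:quantm} and the fact that the flow marginals stay in a fixed bounded set: $\big|\tfrac\eps2\iint\Delta\varphi^\eps_{t-s}\rho^n_s\big|\leq\tfrac\eps2\sup_s\|\rho^n_s\|_\infty\sup_{\eps,s}\int_B|\Delta\varphi^\eps_s|\,\d\mm\to0$. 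One then gets $\varphi_t\leq -Q_t(-\varphi)$ only $\mm$-a.e.\ on $B_r(x)$, and must upgrade to everywhere by combining with the pointwise reverse inequality and continuity of both sides; this last step is absent from your write-up.

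Second, and more seriously, your proof of \eqref{eq:same} rests on a ``standard $\RCD$ identification'' of $\lip f$ with $|\d f|$ for Lipschitz functions with a one-sided distributional Laplacian bound. No such result is available in $\RCD(K,\infty)$ spaces: the identity $\lip f=|\d f|$ for Lipschitz $f$ is a Cheeger-type theorem requiring local doubling and Poincar\'e, which are exactly the hypotheses this paper avoids, and the bound \eqref{eq:lapqt} does not substitute for them. The paper's argument for \eqref{eq:same} is genuinely different and cannot be skipped: it retains the limit plans $\ppi$ produced by the RLF construction (with quantitative small-time bounds on their marginal densities), differentiates $s\mapsto\int\varphi_{t(1-s)}\circ\e_s\,\d\ppi$ using Proposition \ref{pro:11} (which involves $\lip Q_tf$, not $|\d Q_tf|$), and exploits the equality case in the kinetic-energy inequality \eqref{eq:perdopo} to force $|\d\varphi_r|=\lip\varphi_r$ on the supports of the flow marginals, concluding by a covering argument in $\X\times(0,\infty)$. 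You need to supply an argument of this type; as it stands, \eqref{eq:same} is not proved.
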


\begin{proof} By \eqref{eq:HJBslow},   \eqref{eq:lipcontr-exp} and  \eqref{eq:lapcontr-exp} we see that for every $T>0$ there is $C>0$ such that  $t\mapsto \varphi^\eps_t+Ct$ is increasing on $[0,T]$ for every $x\in\X$. Let $D \subset \X$ be a countable dense set. By Helly's Selection Principle applied to each of the functions $t \mapsto \varphi_t^\eps(x)+Ct$, with $x \in D$, and by a diagonal argument, there exists
\[
\lim_{k \to \infty}\varphi_t^{\eps_k}(x) =: \varphi_t(x), \qquad \forall x \in D,\ t\geq 0\,.
\]
From the uniform Lipschitz estimates \eqref{eq:lipcontr-exp} it is then clear that for every $t\geq 0$ the map $x\mapsto \varphi_t(x)$ can be uniquely extended to a Lipschitz function, still denoted by $\varphi_t$. Also, convergence on a dense set plus uniform continuity estimates easily give that 
\begin{equation}
\label{eq:epsk}
\lim_{k\to\infty}\varphi_t^{\eps_k}(x)=\varphi_t(x),\qquad\forall t\geq 0,\ x \in \X\,.
\end{equation}
With this said, \eqref{eq:hjb-hopflax} is proved if we are able to show that
\begin{equation}\label{eq:limitishopflax}
-\varphi_t = Q_t(-\varphi), \qquad \forall t > 0\,,
\end{equation}
where $Q_t$ is the Hopf--Lax semigroup defined in \eqref{eq:hli}. Indeed, if \eqref{eq:limitishopflax} holds, then $\varphi_t$ does not depend on the sequence $(\eps_k)$, so that the whole family $(\varphi_t^\eps)$ converges to $\varphi_t = -Q_t(-\varphi)$ pointwise as $\eps \to 0$, which is precisely \eqref{eq:hjb-hopflax}.

\noindent{\bf Inequality $\leq$ in \eqref{eq:limitishopflax}}. Fix $t>0$, pick $x,y \in \X$, $r>0 $, define
\[
\nu^r_x := \frac{1}{\mm(B_r(x))}\mm\restr{B_r(x)}\,, \qquad\qquad\qquad\nu^r_y := \frac{1}{\mm(B_r(y))}\mm\restr{B_r(y)}\,,
\]
and $\ppi^r$ as the lifting of the unique $W_2$-geodesic from $\nu^r_x$ to $\nu^r_y$ (recall point $(i)$ of Theorem \ref{thm:bm}). By Corollary \ref{cor:easy} we have that $s\mapsto\varphi^\eps_{st}$ is in $\AC([0,1],W^{1,2}(\X))$, thus we can apply the second part of Lemma \ref{lem:dermiste} to this function and $\ppi^r$ and get
\[
\begin{split}
\frac{\d}{\d s}\int {\varphi}^\eps_{st}\circ\e_s\,\d\ppi^r \geq \int t\frac{\d}{\d u}{\varphi}^\eps_u \restr{u={st}}(\gamma_s)-|\d {\varphi}^\eps_{st}|(\gamma_s)|\dot\gamma_s|\,\d\ppi^r(\gamma)\,.
\end{split}
\]
Now recall \eqref{eq:HJBslow}, so that the above and Young's inequality imply
\[
\frac{\d}{\d s}\int \varphi^\eps_{st}\circ\e_s\,\d\ppi^r\geq \int\frac{\eps t}2\Delta\varphi^\eps_{st}(\gamma_s)-\frac1{2t}|\dot\gamma_s|^2\,\d\ppi^r(\gamma)\,.
\]
Integrating in time, observing that $\varphi^\eps_0 = \varphi$ and recalling that $\ppi^r$ is optimal we get
\begin{equation}\label{eq:oasis}
\int\varphi^\eps_t\,\d\nu^r_y - \int\varphi^\eps\,\d\nu^r_x \geq -\frac1{2t}W_2^2(\nu^r_y,\nu^r_x) + \iint_0^1\frac{\eps t}2\Delta\varphi^\eps_{st}\circ\e_s\,\d s\,\d\ppi^r\,.
\end{equation}
Let $\eps=\eps_k\downarrow0$ be as in \eqref{eq:epsk} and combine the pointwise convergence with the uniform bound to deduce, via the dominated convergence theorem, that
\[
\lim_{k \to \infty} \int\varphi^{\eps_k}_t\,\d\nu^r_y = \int\varphi_t\,\d\nu^r_y\,.
\]
On the other hand, the uniform bound \eqref{eq:lapcontr-int2} and the fact that $\ppi^r$ has bounded compression and marginals uniformly supported in a bounded set (being the lifting of a $W_2$-geodesic between measures with bounded supports and densities) imply that the second term on the right-hand side in \eqref{eq:oasis} vanishes in the limit, whence
\[
\int\varphi_t\,\d\nu^r_y - \int\varphi\,\d\nu^r_x \geq -\frac1{2t}W_2^2(\nu^r_y,\nu^r_x)\,.
\]
Therefore, letting $r \downarrow 0$ we conclude from the arbitrariness of $x \in \X$ and the continuity of $\varphi_t:\X\to\R$ that
\begin{equation}
\label{eq:o1}
-\varphi_t(y) \leq Q_t(-\varphi)(y) \qquad \forall y\in \X\,.
\end{equation}

\noindent{\bf Inequality $\geq$ in \eqref{eq:limitishopflax}}. Fix $t>0$, define  $v_s^\eps := t\nabla\varphi_{(1-s)t}^\eps$ and note that the estimates  \eqref{eq:linftygradlap}, \eqref{eq:w12gradlap} and \eqref{eq:w12c} ensure that these vector fields satisfy the assumptions of Theorem \ref{thm:RLF}. We thus obtain existence of the Regular Lagrangian flow $F^\eps$. 

Fix $x\in\X$, $r>0$ and put $\ppi^\eps := \mm({B_{r}( {x})})^{-1}(F^\eps_\cdot)_*\mm\restr{B_{r}( {x})}$, where $F^\eps_\cdot:\X\to C([0,1],\X)$ is the $\mm$-a.e.\ defined map which sends $x$ to $s\mapsto F_s^\eps(x)$, and observe that the bound \eqref{eq:quantm}, identity \eqref{eq:quants} and the estimates \eqref{eq:lipcontr-exp}, \eqref{eq:lapcontr-exp} ensure that  $\ppi^\eps$ is a test plan and  the uniform bound
\begin{equation}
\label{eq:uniformtest}
(\e_s)_*\ppi^\eps \leq \mm({B_{r}( {x})})^{-1}e^{sC(t,\eps,\varphi)}\mm \qquad \forall s \in[0,1],\ \eps\in(0,1)\,,
\end{equation}
where 
\begin{equation}
\label{eq:defcteps}
C(t,\eps,\varphi):= \|(\Delta\varphi)^-\|_{L^\infty(\X)} + K^-te^{K^-\eps t}\| |\nabla\varphi| \|^2_{L^\infty(\X)}\,.
\end{equation}
As before, we apply Lemma \ref{lem:dermiste} to $\ppi^\eps$ and $s \mapsto {\varphi}^\eps_{(1-s)t}$ to obtain
\[
\begin{split}
\frac{\d}{\d s}\int \varphi^\eps_{(1-s)t}\circ\e_s\,\d\ppi^\eps
& = \int \Big(-t \frac{\d}{\d s'}\varphi^\eps_{s'}\restr{s'={(1-s)t}}\circ\e_s + \d{\varphi}^\eps_{(1-s)t}(v_s^\eps) \circ\e_s\Big)\,\d\ppi^\eps\\
& = \int\Big(-\frac{t}{2}|\d\varphi^\eps_{(1-s)t}|^2 - \frac{\eps t}{2}\Delta\varphi^\eps_{(1-s)t} + t|\d\varphi^\eps_{(1-s)t}|^2\Big)\circ\e_s\,\d\ppi^\eps\\
& = \int\Big( \frac{t}{2}|\d\varphi^\eps_{(1-s)t}|^2 - \frac{\eps t}{2}\Delta\varphi^\eps_{(1-s)t} \Big)\circ\e_s\,\d\ppi^\eps\,.
\end{split}
\]
Integrating in time and recalling \eqref{eq:quants} and that $\varphi^\eps_0 = \varphi$ we deduce
\begin{equation}
\label{eq:k6}
\int \varphi\circ\e_1 - \varphi_t^\eps\circ\e_0\,\d\ppi^\eps = \iint_0^1\frac1{2t}|\dot\gamma_s|^2 - \frac{\eps t}{2}\Delta\varphi^\eps_{(1-s)t}(\gamma_s)\,\d s\,\d\ppi^\eps(\gamma)\,.
\end{equation}
We claim that the family $(\ppi^\eps)_{\eps\in(0,1)}$ is tight. To see this we apply the tightness criterion Theorem \ref{thm:RLFstability} and note that \eqref{eq:stability1} holds because  
\begin{equation}
\label{eq:datoiniziale}
(\e_0)_*\ppi^\eps=\mm(B_r( x))^{-1}\mm\restr{B_r( x)}\qquad\forall \eps \in (0,1)\,.
\end{equation}
The first condition in \eqref{eq:stability2} follows from \eqref{eq:uniformtest} and the second one from the estimate \eqref{eq:lipcontr-exp}. Hence our claim is proved.

Now let $\eps=\eps_k\downarrow0$ be as in \eqref{eq:epsk} and pass to a non-relabeled subsequence to find a weak limit $\ppi$ of $(\ppi^{\eps_k})$. The weak convergence,   \eqref{eq:datoiniziale} and \eqref{eq:epsk} give
\[
\lim_{k\to\infty}\int \left(\varphi\circ\e_1 - \varphi_t^{\eps_k}\circ\e_0\right)\,\d\ppi^{\eps_k} = \int \left(\varphi\circ\e_1 - \varphi_t\circ\e_0\right)\,\d\ppi\,.
\]
On the other hand, the estimates \eqref{eq:lipcontr-exp}, and the identities \eqref{eq:datoiniziale}, \eqref{eq:quants} imply that the measures $(\e_s)_*\ppi^\eps$ are all concentrated on some bounded Borel set $B\subset\X$ for $\eps,s\in(0,1)$. Thus by  \eqref{eq:lapcontr-int2} we see that  the term with the Laplacian in \eqref{eq:k6} vanishes in the limit and thus taking into account the lower semicontinuity of the 2-energy we deduce that
\begin{equation}
\label{eq:perdopo}
\int \left(\varphi\circ\e_1- \varphi_t\circ\e_0\right)\,\d\ppi \geq \frac1{2t}\iint_0^1|\dot\gamma_s|^2\,\d s\,\d\ppi\geq \frac1{2t}\int\sfd^2(\gamma_0,\gamma_1)\,\d\ppi(\gamma)\,.
\end{equation}
Now note that \eqref{eq:o1} implies that 
\begin{equation}
\label{eq:hjcurve}
\frac{\sfd^2(\gamma_0,\gamma_1)}{2t}\geq \varphi(\gamma_1)-\varphi_t(\gamma_0)
\end{equation} 
for every curve $\gamma$, hence the above gives
\[
\int \left(\varphi\circ\e_1- \varphi_t\circ\e_0\right)\,\d\ppi \geq \frac1{2t}\int\sfd^2(\gamma_0,\gamma_1)\,\d\ppi(\gamma) \geq \int \left(\varphi\circ\e_1- \varphi_t\circ\e_0\right)\,\d\ppi\,,
\]
thus forcing the inequalities to be equalities. In particular, equality in \eqref{eq:hjcurve} holds for $\ppi$-a.e.\ $\gamma$ and since $(\e_0)_*\ppi = \mm\restr{B_{r}({x})}$, this is the same as to say that for $\mm$-a.e.\ $y \in B_{r}({x})$ there exists $x_y \in \supp((\e_1)_*\ppi)$ such that
\[
\frac{\sfd^2(x_y,y)}{2t} - \varphi(x_y) = -\varphi_t(y)\,.
\]
Combined with \eqref{eq:o1}, this implies that equality holds in \eqref{eq:o1} for $\mm$-a.e.\ $y \in B_{r}({x})$. Since both sides of $\eqref{eq:o1}$ are continuous in $y$, we deduce that equality holds for every $y \in B_{r}({x})$ and the arbitrariness of $r$ allows to conclude  that equality actually holds for every $y \in \X$.

\noindent{\bf Last claims}. Fix $\eta$ as in the assumptions and note that \eqref{eq:lapcontr-exp}, \eqref{eq:defcteps} and the trivial bound $a\geq -a^-$ valid for every $a \in \R$ give
\[
\int \varphi^\eps_t\Delta\eta\,\d\mm = \int \Delta\varphi^\eps_t\,\eta\,\d\mm \geq -\int (\Delta\varphi^\eps_t)^-\,\eta\,\d\mm \geq -C(t,\eps,\varphi) \int\eta\,\d\mm \qquad \forall\eps>0\,.
\]
Now we use the fact that $\Delta\eta\in L^1(\X)$, the pointwise convergence of $\varphi^\eps_t$ to $-Q_t(-\varphi)$ and the uniform bounds \eqref{eq:inftycontr-exp} to pass to the limit via dominated convergence and obtain \eqref{eq:lapqt}.

Passing to \eqref{eq:same}, we recall that the ``$\geq$'' inequality is always true, so we concentrate on the opposite one. We fix parameters $R,T,\tilde\eps>0$ and a point $x\in\X$. Then we pick $t\in(0,T]$, $\eps\in(0,1)$, consider as above the vector fields $v_s^\eps := t\nabla\varphi^\eps_{(1-s)t}$, their Regular Lagrangian Flows $F^\eps$ and then the plans $\ppi^\eps := \mm(B_{R-\tilde\eps}(x))^{-1}(F^\eps_\cdot)_*\mm\restr{B_{R-\tilde\eps}(x)}$. By the uniform estimates \eqref{eq:lipcontr-exp} there is $S = S(R,T,\tilde\eps)>0$ such that $\supp((\e_s)_*\ppi^\eps) \subset B_R(x)$ for every $s\in[0,S]$. Also, by \eqref{eq:uniformtest} and the fact that $C(t,\eps,\varphi)$ defined as in \eqref{eq:defcteps} is increasing in $t$, we see that possibly picking $S=S(R,T,\tilde\eps) > 0$ smaller we also have $(\e_s)_*\ppi^\eps \leq (1+\tilde\eps)\mm$ for every $s \in [0,S]$. We now let $\eps \downarrow 0$ as above to find a limit plan $\ppi$. The uniform estimate \eqref{eq:uniformtest} for $\ppi^\eps$ passes to the limit and ensures that
\begin{equation}
\label{eq:boundpiccolo}
(\e_s)_*\ppi \leq \frac{1+\tilde\eps}{\mm(B_{R-\tilde\eps}(x))}\mm\restr{B_R(x)}\qquad\forall s \in [0,S]\,.
\end{equation}
Also, the uniform Lipschitz estimate on the functions $\varphi_{st}$ (that follows from \eqref{eq:lipcontr-exp}), the fact that $\ppi$ is concentrated on equi-Lipschitz curves (being limit of plans with this property) and Proposition \ref{pro:11}  ensure that we can apply Lemma \ref{lem:dermiste} to differentiate  the map  $s\mapsto \int \varphi_{t(1-s)}\circ\e_s\,\d\ppi$, so that paying attention to the minus sign in \eqref{eq:limitishopflax} when applying Proposition \ref{pro:11} we get
\[
\begin{split}
\frac{\d}{\d s} \int \varphi_{t(1-s)}\circ\e_s\,\d\ppi
&\leq-\frac t2\int (\lip\,\varphi_{t(1-s)})^2\circ\e_s\,\d\ppi+\int |\d\varphi_{t(1-s)}|(\gamma_{s})|\dot\gamma_{s}| \,\d\ppi(\gamma)\\
&\leq \frac t2\int\Big(|\d\varphi_{t(1-s)}|^2- (\lip\,\varphi_{t(1-s)})^2\Big)\circ\e_s\,\d\ppi +\frac1{2t}\int |\dot\gamma_s|^2\,\d\ppi(\gamma)\,.
\end{split}
\]
Integrating on $[0,1]$ and recalling \eqref{eq:perdopo} we conclude that 
\[
\iint_0^1\Big(|\d\varphi_{t(1-s)}|^2- (\lip\,\varphi_{t(1-s)})^2\Big)\circ\e_s\,\d s\,\d\ppi \geq 0
\]
and since the integrand is almost surely non-positive, this forces
\[
|\d\varphi_{r}|=\lip\,\varphi_{r}\qquad \textrm{for a.e. } r\in[(1-S)t,t],\quad \mm\textrm{-a.e. on } \{\rho_{1-\frac rt}>0\}\,,
\]
where $\rho_s$ is the density of $(\e_s)_*\ppi$. Thus calling $A\subset\X\times\R^+$ the set (defined up to $\mm\times\mathcal L^1$-negligible sets) of $(x,t)$ such that $|\d\varphi_{t}|(x)=\lip\,\varphi_{t}(x)$ and using the trivial inequality $\mm(\{\rho>0\})\geq \|\rho\|_\infty^{-1}\int\rho\,\d\mm$ in conjunction with \eqref{eq:boundpiccolo}, we deduce that
\[
\mm\Big(A_r \cap B_R(x)\big)\Big) \geq \frac{\mm(B_{R-\tilde\eps}(x))}{1+\tilde\eps} \qquad \textrm{a.e. } r \in [(1-S)t,t]\,,
\]
where $A_r := \{x \in \X \,:\, (x,r) \in A\}$ is the $r$-section of $A$. We now pick $t:=(1-S)^nT$ in the above, $n\in\N$, to obtain that
\[
(\mm\times\mathcal L^1\restr{[0,T]})\Big(A\cap\big(B_R\times[0,T]\big)\Big) \geq T\frac{\mm(B_{R-\tilde\eps}(x))}{1+\tilde\eps}\,,
\]
thus letting $\tilde\eps\downarrow0$, the conclusion follows from the arbitrariness of $R,T$.
\end{proof}

\section{Large Deviations Principle}
\subsection{Heat kernel}\label{sec:LDPkernel}

Here we use Proposition \ref{pro:from-hjb-to-hopflax} to derive the Large Deviation Principle on \emph{proper} $\RCD(K,\infty)$ spaces. Let us recall that, from \cite[Theorem 2.5.28]{BBI01}, for an $\RCD(K,\infty)$ space being proper is equivalent to being locally compact. As it is clear from Theorems \ref{thm:lower-ldp}, \ref{thm:upper-ldp} all is left to do is to prove exponential tightness and generalize the results in Proposition \ref{pro:from-hjb-to-hopflax} to functions in $C_b(\X)$.

We shall make use of the following simple, yet crucial result, whose proof can be found in \cite[Lemma 6.7]{AmbrosioMondinoSavare13-2}.

\begin{Lemma}\label{le:testcutoff}
Let $(\X,\sfd,\mm)$ be a proper $\RCD(K,\infty)$ space with $K \in \R$. 

Then for every $K\subset U\subset\X$ with $K$ compact and $U$ open there is $\eta\in\testi\X$ with values in $[0,1]$, identically 1 on $K$ and with support in $U$.
\end{Lemma}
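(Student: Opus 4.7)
The plan is to produce $\eta$ in the form $\Phi\circ f_s$, where $f_s=\mathfrak{h}_s\chi_0$ is the mollified heat flow of a Lipschitz cutoff $\chi_0$ and $\Phi\in C^\infty(\R)$ satisfies $\Phi(0)=0$: by the stability property of $\testi\X$ under smooth composition fixing the origin recalled in the preliminaries, such a composition automatically lies in $\testi\X$.

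First, since $K$ is compact and $U$ open, $\delta:=\sfd(K,\X\setminus U)>0$. Fix $r\in(0,\delta/5)$ and set $K_t:=\{x\in\X:\sfd(x,K)\leq t\}$; by properness each $K_t$ is compact, in particular $\mm(K_t)<\infty$ because $\mm$ is finite on bounded sets, and $K_{4r}\subset U$. Take
\[
\chi_0(x):=\max\bigl\{0,\min\{1,2-\sfd(x,K)/r\}\bigr\},
\]
which is $\tfrac{1}{r}$-Lipschitz, $[0,1]$-valued, equal to $1$ on $K_r$ and vanishing outside $K_{2r}$, so that $\chi_0\in L^2\cap L^\infty(\X)$. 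Define $f_s:=\mathfrak{h}_s\chi_0\in\testi\X$ for every $s>0$.

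The two key technical estimates to establish, for all sufficiently small $s$, are $(i)$ $f_s\geq 3/4$ on $K$, and $(ii)$ $f_s\leq 1/4$ on $\X\setminus K_{3r}$. Estimate $(i)$ will follow from the uniform convergence $\h_t\chi_0\to\chi_0$ on the compact set $K_{r/2}$ as $t\downarrow 0$: indeed $\chi_0$ is uniformly continuous and the family $\{\h_t\chi_0\}_{t\in[0,1]}$ is equi-Lipschitz by Bakry-\'Emery \eqref{eq:be}, so that Ascoli together with $L^2$-convergence yields uniform convergence on compacts, and averaging against $\kappa(\cdot/s)$ preserves this. Estimate $(ii)$ rests on the off-diagonal decay of the heat semigroup: since $\supp\chi_0\subset K_{2r}$ and $\sfd(x,K_{2r})\geq r$ for $x\notin K_{3r}$, the Davies-Gaffney-type inequality available in strongly local Dirichlet spaces (hence in our $\RCD(K,\infty)$ setting) yields an $L^2$-smallness of $\h_t\chi_0$ on $\X\setminus K_{3r}$ of exponential order in $r^2/t$; this $L^2$-decay is then upgraded to the pointwise bound $\sup_{\X\setminus K_{3r}}f_s\to 0$ via the $L^\infty$-Lipschitz control of $\h_t\chi_0$ coming from Bakry-\'Emery, combined with the positivity of $\mm$ on balls in the compact set $K_{4r}$.

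Once both estimates hold at some $s=s_0>0$, choose $\Phi\in C^\infty(\R)$ with $\Phi(0)=0$, $\Phi\equiv 0$ on $(-\infty,1/4]$, $\Phi\equiv 1$ on $[3/4,+\infty)$ and $\Phi([0,1])\subset[0,1]$, and set $\eta:=\Phi\circ f_{s_0}$. By the stability of $\testi\X$ under smooth composition fixing the origin we obtain $\eta\in\testi\X$; by $(i)$, $\eta\equiv 1$ on $K$; by $(ii)$, $\eta\equiv 0$ on $\X\setminus K_{3r}$, hence $\supp\eta\subset K_{3r}\subset U$ and $\eta$ takes values in $[0,1]$, as required. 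The main obstacle is the $L^2$-to-$L^\infty$ upgrade in step $(ii)$: in the $\RCD(K,\infty)$ setting, without a finite upper dimension bound, we lack uniform local volume lower bounds, and properness is precisely what makes the argument feasible by allowing one to localize all estimates to the compact neighbourhood $K_{4r}$.
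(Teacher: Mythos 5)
Your overall architecture is the right one, and it is essentially the one used in the reference the paper points to for this lemma (\cite[Lemma~6.7]{AmbrosioMondinoSavare13-2}): mollify a Lipschitz cutoff $\chi_0$ through $\mathfrak h_s$ to land in $\testi\X$, show the mollification stays close to $\chi_0$, then post-compose with a smooth $\Phi$ vanishing near $0$. Step $(i)$ and the final composition step are fine. The gap is in step $(ii)$.

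To conclude that $\supp\eta\subset K_{3r}$ you must bound $f_s$ above by $1/4$ on \emph{all} of $\X\setminus K_{3r}$, which is a non-compact set. Your $L^2$-to-$L^\infty$ upgrade uses a lower bound $\mm(B_\rho(x))\geq c>0$ for the balls on which the Lipschitz bound propagates pointwise values of $\h_t\chi_0$; you invoke ``positivity of $\mm$ on balls in the compact set $K_{4r}$'', but the points $x$ you need to control range over the whole complement of $K_{3r}$, most of which lies outside $K_{4r}$. In a genuinely infinite-dimensional $\RCD(K,\infty)$ space --- even a proper one, e.g.\ $(\R,|\cdot|,e^{-V}\mathcal L^1)$ with $V$ uniformly convex and superquadratic --- one has $\inf_{x\notin K_{4r}}\mm(B_\rho(x))=0$, so the estimate $\h_t\chi_0(x)\lesssim \|\h_t\chi_0\|_{L^2(\X\setminus K_{5r/2})}\,\mm(B_\rho(x))^{-1/2}$ degenerates exactly where you need it, and the Davies--Gaffney decay in $r^2/t$ cannot compensate a volume decay that is independent of $t$. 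Properness does not rescue this: it localizes $K$, not its complement. The standard repair, and the mechanism behind the cited lemma, is to prove \emph{uniform} convergence $\|\h_t\chi_0-\chi_0\|_{L^\infty(\X)}\to0$ via Kuwada duality: the dual semigroup on measures satisfies $W_2(\h_t^*\delta_x,\delta_x)\leq C(K)\sqrt t$ uniformly in $x$ together with the $W_2$-contraction, whence $|\h_t\chi_0(x)-\chi_0(x)|\leq \Lip(\chi_0)\,W_1(\h_t^*\delta_x,\delta_x)\leq C(K)\Lip(\chi_0)\sqrt t$ for every $x\in\X$. This single estimate gives both $(i)$ and $(ii)$ at once, with no Davies--Gaffney inequality, no volume lower bounds, and in fact no properness. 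With that substitution your proof closes.
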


Such lemma and the monotonicity properties of the heat flow, the Cole--Hopf transform and the Hopf--Lax formula easily provide the following generalization of Proposition \ref{pro:from-hjb-to-hopflax}.

\begin{Proposition}\label{lem:ldp-lipschitz}
Let $(\X,\sfd,\mm)$ be a proper $\RCD(K,\infty)$ space with $K \in \R$ and $\varphi\in C_b(\X)$. Then \eqref{eq:hjb-hopflax} still holds true for all $t>0$ and $x \in \X$.
\end{Proposition}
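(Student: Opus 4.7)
The plan is to reduce the claim to the test-function case (Proposition \ref{pro:from-hjb-to-hopflax}) by pointwise monotone sandwiching with elements of $\testi\X$, relying on the cutoffs provided by Lemma \ref{le:testcutoff}. First I would record two elementary structural facts: (i) both sides of \eqref{eq:hjb-hopflax} are monotone non-decreasing in $\varphi$ -- the LHS because the heat flow preserves the $\mm$-a.e.\ order while $\exp$ and $\log$ are monotone, the RHS by definition of the sup; and (ii) both sides are shift-covariant, namely $(\varphi + c)^\eps_t(x) = \varphi^\eps_t(x) + c$ (coming from $e^{(\varphi+c)/\eps} = e^{c/\eps}e^{\varphi/\eps}$) and $\sup_y\{(\varphi+c)(y) - \sfd^2(x,y)/(2t)\} = c + \sup_y\{\varphi(y) - \sfd^2(x,y)/(2t)\}$. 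By (ii) I may and do assume $\varphi \geq 0$.

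For the lower bound I would show that for every $y_0 \in \X$ and $\delta > 0$ there exists $\tilde\varphi \in \testi\X$ with $\tilde\varphi \leq \varphi$ pointwise and $\tilde\varphi(y_0) \geq \varphi(y_0) - \delta$. Continuity of $\varphi$ gives $r > 0$ with $\varphi \geq \varphi(y_0) - \delta$ on $\overline{B_r(y_0)}$; applying Lemma \ref{le:testcutoff} with $K := \{y_0\}$ and $U := B_r(y_0)$ produces $\eta \in \testi\X$ with values in $[0,1]$, $\eta(y_0) = 1$ and $\supp\eta \subset B_r(y_0)$. Setting $\tilde\varphi := \max\{\varphi(y_0)-\delta,\,0\}\,\eta$ yields an element of $\testi\X$ with $\tilde\varphi(y_0) \geq \varphi(y_0) - \delta$; moreover $\tilde\varphi \leq \varphi(y_0)-\delta \leq \varphi$ on $B_r(y_0)$ by the choice of $r$, and $\tilde\varphi \equiv 0 \leq \varphi$ outside $\supp\eta$ since we have reduced to $\varphi \geq 0$. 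Applying Proposition \ref{pro:from-hjb-to-hopflax} to $\tilde\varphi$ together with fact (i) yields $\liminf_{\eps\downarrow 0}\varphi^\eps_t(x) \geq \lim_\eps \tilde\varphi^\eps_t(x) = \sup_y\{\tilde\varphi(y) - \sfd^2(x,y)/(2t)\} \geq \varphi(y_0) - \delta - \sfd^2(x,y_0)/(2t)$; taking the sup over $y_0$ and letting $\delta \downarrow 0$ gives the desired lower bound.

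The upper bound $\limsup_{\eps\downarrow 0}\varphi^\eps_t(x) \leq \sup_y\{\varphi(y) - \sfd^2(x,y)/(2t)\}$ is treated analogously by approximating $\varphi$ \emph{from above} by test functions $\tilde\varphi^+ \geq \varphi$ whose Hopf--Lax value at $x$ is close to that of $\varphi$. This step is the main obstacle: since $\testi\X \subset W^{1,2}(\X) \subset L^2(\X)$, a test function cannot dominate an arbitrary strictly positive bounded continuous function on an infinite-measure $\X$, so a na\"ive pointwise upper-sandwich is not always available. The workaround exploits properness together with boundedness of $\varphi$: the sup defining the Hopf--Lax formula is attained on the bounded ball $B_{R_0}(x)$ with $R_0 := 2\sqrt{t\|\varphi\|_\infty}$, so one needs only an upper sandwich on some $B_{R}(x)$ with $R > R_0$ and control of the far-field contribution. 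Concretely, combining a large cutoff $\eta_R \in \testi\X$ with $\eta_R \equiv 1$ on $B_R(x)$ and $\supp\eta_R$ bounded (from Lemma \ref{le:testcutoff}) with a small local modification near a near-maximizer $y_0$ produces the required $\tilde\varphi^+$, provided one shows that the tail contribution of $\varphi$ outside $B_R(x)$ to $\varphi^\eps_t(x)$ can be absorbed into the error as $R \to \infty$; this is where the symmetric structure of the proof breaks and requires the extra care allowed by the proper setting.
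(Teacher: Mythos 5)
Your lower bound is correct and is essentially the paper's own argument: after the shift reducing to $\varphi\geq 0$, you under-sandwich $\varphi$ pointwise by compactly supported test bumps built from Lemma \ref{le:testcutoff} and conclude via monotonicity and Proposition \ref{pro:from-hjb-to-hopflax}.

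The upper bound, however, contains a genuine gap which you have diagnosed but not closed. Having normalized to $\varphi\geq 0$, no element of $\testi\X\subset L^2(\X)$ can dominate $\varphi$ globally when $\mm(\X)=\infty$, so you are forced to truncate at scale $R$ and must then control the far-field contribution of $\X\setminus B_R(x)$ to $\eps\log\int e^{\varphi/\eps}\,\hp_{\eps t/2}[x]\,\d\mm$. Making that ``tail absorption'' rigorous amounts to a Gaussian-type upper bound such as $\limsup_{\eps\downarrow 0}\eps\log\mu_{\eps t/2}[x](\X\setminus B_R(x))\leq -cR^2$, i.e.\ precisely an integrated Varadhan-type upper estimate of the kind the whole section is trying to establish; none of the tools developed so far supplies it, so as written the upper bound is circular, or at best incomplete. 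The paper sidesteps the issue by using the shift-covariance \emph{separately for the two inequalities}: for the $\limsup$ one normalizes to $\varphi\leq 0$ instead of $\varphi\geq 0$. Then $0\in\testi\X$ already dominates $\varphi$, and more generally any non-positive test function which is a uniform upper approximation of $\varphi$ on a large ball $B_R(x)$ and vanishes outside a bounded set still dominates $\varphi$ \emph{everywhere}; moreover, being $\leq 0$, its contribution to the Hopf--Lax supremum from $\X\setminus B_R(x)$ is at most $-R^2/(2t)$, which for $R$ large falls below the Hopf--Lax value of $\varphi$, while on $B_R(x)$ it exceeds that value by at most the approximation error. With this normalization the pointwise upper sandwich by test functions is available and no heat-kernel tail estimate is needed; you should restructure your upper bound accordingly.
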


\begin{proof}
Note that replacing $\varphi$ with $\varphi+c$ for $c\in\R$ we see that both sides of formula   \eqref{eq:hjb-hopflax} are increased by $c$. Thus, since $\varphi$ is bounded, we can assume that $\varphi$ is non-negative in proving the $\liminf$ inequality and that is non-positive in proving the $\limsup$.

Thus let $\varphi\in C_b(\X)$ be non-negative and note that a direct application of Lemma \ref{le:testcutoff} ensures that 
\[
\varphi=\sup\big\{\eta\in\testi\X\ :\ \eta\leq \varphi\text{ on $X$}\big\}\,.
\] 
Now fix $t,x$ and (by continuity and compactness) find $y\in\X$ realizing the sup in $\varphi(\cdot)-\frac{\sfd^2(\cdot,x)}{2t}$. Then find $(\eta_n)\subset\testi\X$ with $\eta_n\leq \varphi$ on $\X$ for every $n\in\N$ and $\eta_n(y)\to\varphi(y)$ as $n\to\infty$. For every $n\in\N$ we then have
\[
\begin{split}
\liminf_{\eps\downarrow0}\eps\log\int e^{\varphi/\eps}\hp_{\eps t/2}[x]\d\mm &\geq\liminf_{\eps\downarrow0}\eps\log\int e^{\eta_n/\eps}\hp_{\eps t/2}[x]\d\mm \\
\text{(by \eqref{eq:hjb-hopflax})}\qquad&= \sup_{z\in \X}\Big\{\eta_n(z) - \frac{\sfd^2(x,z)}{2t}\Big\}\geq \eta_n(y) - \frac{\sfd^2(x,y)}{2t}\,.
\end{split}
\]
Letting $n\to\infty$ we see that the $\liminf$ inequality in  \eqref{eq:hjb-hopflax} holds for the given function $\varphi$. The $\limsup$ follows along the same lines.
\end{proof}
As a direct consequence we obtain:

\begin{Theorem}[Heat kernel LDP]\label{thm:ldp-heatkernel}
Let $(\X,\sfd,\mm)$ be a proper $\RCD(K,\infty)$ space with $K \in \R$. For $x\in\X$ and  $t>0$ we consider the measures $\mu_t[x] := \hp_{t}[x]\mm=\h_t\delta_x$. Then the family $(\mu_t[x])_{t>0}$ satisfies a Large Deviations Principle with speed $t^{-1}$ and rate
\[
I(z) := \frac{\sfd^2(x,z)}{4}\,.
\]
\end{Theorem}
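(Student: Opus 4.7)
The plan is a three-step application of the machinery that has already been set up in Sections 2 and 4, with essentially no new ideas required at this stage.

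First, I need to check that $I(z) := \sfd^2(x,z)/4$ is an admissible rate function in the sense of Definition \ref{def:ldp}, i.e.\ has compact sublevels. For every $c\geq 0$ the sublevel $\{z\in\X : I(z)\leq c\}$ is the closed ball $\overline{B}_{2\sqrt{c}}(x)$; since $(\X,\sfd)$ is proper, closed bounded sets are compact, so the sublevels are indeed compact (and in particular $I$ is lower semicontinuous). This is the only place where properness enters beyond what is already needed for Proposition \ref{lem:ldp-lipschitz}.

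Next, I aim to verify the log-Laplace characterizations appearing in Theorems \ref{thm:lower-ldp}(ii) and \ref{thm:upper-ldp}(ii), namely that for every $\varphi\in C_b(\X)$,
\[
\lim_{t\downarrow 0} t\log\int e^{\varphi/t}\,\d\mu_t[x] \;=\; \sup_{z\in\X}\Big\{\varphi(z) - \frac{\sfd^2(x,z)}{4}\Big\}.
\]
This is obtained directly from Proposition \ref{lem:ldp-lipschitz} by choosing its internal time parameter equal to $2$: with that choice $\eps t/2 = \eps$, so \eqref{eq:hjb-hopflax} becomes
\[
\lim_{\eps\downarrow 0}\eps\log\int e^{\varphi/\eps}\hp_\eps[x]\,\d\mm \;=\; \sup_{z\in\X}\Big\{\varphi(z) - \frac{\sfd^2(x,z)}{4}\Big\},
\]
which is exactly the desired identity after relabelling $\eps$ as $t$ and recalling $\mu_t[x] = \hp_t[x]\mm$.

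Finally, since the above limit exists as an equality, it simultaneously yields the $\liminf \geq$ inequality required by Theorem \ref{thm:lower-ldp}(ii) and the $\limsup \leq$ inequality required by Theorem \ref{thm:upper-ldp}(ii); applying these two theorems gives respectively the Large Deviations lower bound on every open set and the upper bound on every closed set, which together constitute the full LDP. There is no genuine obstacle in this final argument: all the substance (the gradient and Laplacian contraction estimates of Section \ref{sec:contraction}, the vanishing-viscosity convergence $\varphi^\eps_t \to -Q_t(-\varphi)$ of Proposition \ref{pro:from-hjb-to-hopflax}, and the monotone extension from $\testi\X$ to $C_b(\X)$ via the cut-off functions provided by Lemma \ref{le:testcutoff}) has been carried out already. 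Properness is used in two independent places: through Lemma \ref{le:testcutoff} inside Proposition \ref{lem:ldp-lipschitz}, and through the compactness of closed balls needed to make $I$ a rate function.
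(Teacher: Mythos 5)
Your proposal is correct and follows exactly the paper's own argument: properness gives compact sublevels of $I$ (the sublevel set being $\overline{B}_{2\sqrt{c}}(x)$), and then the log-Laplace limit from Proposition \ref{lem:ldp-lipschitz} with the internal time parameter set to $2$ (so that $\eps t/2=\eps$) feeds directly into the equivalences of Theorems \ref{thm:lower-ldp} and \ref{thm:upper-ldp}. No discrepancies to report.
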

\begin{proof} Since we assumed $\X$ to be proper, the rate function $I$ has compact sublevels. Hence Theorems \ref{thm:lower-ldp}, \ref{thm:upper-ldp} apply and the conclusion follows from Proposition \ref{lem:ldp-lipschitz} above (we are applying \eqref{eq:hjb-hopflax} with $\eps:=t$ and $t:=2$).
\end{proof}

\subsection{Brownian motion}\label{sec:LDPbrownian}
We now prove the LDP for the Brownian motion starting from a given point $\bar x$ of a proper $\RCD(K,\infty)$ space. The Brownian motion is the continuous Markov process with transition probabilities given by the heat kernel measures and for the purpose of the present discussion we identify the Brownian motion starting from $\bar x$ with its law, i.e., the Borel probability measure over $C([0,1],\X)$, characterized as the only one concentrated on curves starting from $\bar x$, with the Markov property and transition probabilities given by the heat kernel measures $\mu_{s-t}[x]$ (see \cite[Theorem 6.8]{AmbrosioGigliSavare11-2} and \cite[Theorem 7.5]{AmbrosioGigliMondinoRajala15} for existence and uniqueness). In other words $\B$ is characterized by the validity of 
\begin{equation}\label{eq:chapman-kolmogorov}
(\e_{t_1},...,\e_{t_n})_*\B = \hp_{t_1}[\bar x](x_1)\hp_{t_2-t_1}[x_1](x_2) \cdots \hp_{t_n - t_{n-1}}[x_{n-1}](x_n)\d\mm(x_1) \cdots \d\mm(x_n)\,,
\end{equation}
for every $0<t_1<\ldots t_n\leq 1$. 

A remark on the existence and uniqueness of the Brownian motion starting at \emph{any} arbitrary point $\bar x \in \X$ is in order: \cite[Theorem 6.8]{AmbrosioGigliSavare11-2} and \cite[Theorem 7.5]{AmbrosioGigliMondinoRajala15} actually ensure the existence of the Brownian motion starting at any $\bar x$, \emph{up to an $\mm$-negligible set}. However, given that $\X$ is assumed to be proper, by the Feller property one can show that the Brownian motion can actually start at any $\bar x \in \X$. As for the uniqueness, this should also be understood up to $\mm$-equivalence, but the continuity of $\hp_t$ entails the validity of \eqref{eq:chapman-kolmogorov} for every $\bar x \in \X$ rather than just $\mm$-a.e.\ $\bar x \in \X$.

Then the slowed-down Brownian motion $\tilde\B_t\in\prob{C([0,1],\X)}$ is defined as
\[
\tilde\B_t := ({\rm restr}_0^t)_*\B \qquad \textrm{where} \quad ({\rm restr}_0^t)(\gamma)(s) := \gamma_{st},\, \gamma \in C([0,1],\X)\,.
\]
We will show that, as expected, $(\tilde\B_t)$ satisfies a LDP with  rate function $I:C([0,1],\X) \to [0,\infty]$ defined as 
\[
I(\gamma) := \left\{\begin{array}{ll}
\displaystyle{\frac{1}{4}\int_0^1 |\dot{\gamma}_t|^2\,\d t} & \qquad \text{ if } \gamma \in \AC^2([0,1],\X),\,\gamma_0 = \bar x, \\
+\infty & \qquad \text{ otherwise.}
\end{array}\right.
\]
Note that a bound on the kinetic energy provides a ($\tfrac12$-H\"older) continuity estimate; thus since $\X$ is a proper space and $I(\gamma)=\infty$ if $\gamma_0\neq\bar x$, it is easy to see that sublevels of $I$ are compact.

Our proof of the LDP for the Brownian motion will be based on the representation formula
\begin{equation}
\label{eq:reprI}
I(\gamma)=\frac14\sup\sum_{i=0}^{n-1}\frac{\sfd^2(\gamma_{t_{i+1}},\gamma_{t_i})}{|t_{i+1}-t_i|}\qquad\forall\gamma\in C([0,1],\X)\ \text{ with }\gamma_0=\bar x\,,
\end{equation}
where the supremum is taken among all $n\in\N$ and partitions $0=t_0<\cdots<t_n=1$ of $[0,1]$. A proof of \eqref{eq:reprI} can be found, e.g., in \cite[Lemma 2.5]{Lisini07} (and its proof). Actually, it is easy to see that \eqref{eq:reprI} holds even if we only take the supremum over partitions such that $\sfd(\gamma_{t_{i+1}}, \gamma_{t_i}) > 0$ for all $i=0,\dots,n-1$. 

%
%
%
%

We start with the following auxiliary result, that --starting from the LDP for the heat flow-- provides LD lower and upper bounds for the family of measures obtained by integrating the $\mu_t[x]$'s as $x$ varies in some fixed compact set.

\begin{Lemma}\label{pro:ldp-superposition}
Let $(\X,\sfd,\mm)$ be a proper $\RCD(K,\infty)$ space with $K \in \R$, $B \subset \X$ compact and $(\sigma_t)_{t > 0} \subset \prob\X$ with $\sigma_t$ concentrated on $B$ for every $t > 0$. For every $t>0$ and $x \in \X$ define $\overline{\mu}_t\in\prob\X$ as
\[
\overline{\mu}_t(E) := \int \mu_t[x](E)\d\sigma_t(x), \qquad \forall E \textrm{ Borel}.
\]
Then for every $\varphi\in C_b(\X)$ we have
\begin{equation}
\label{eq:lemmino}
\begin{split}
\limsup_{t \downarrow 0} t\log\int e^{\varphi/t}\,\d\overline{\mu}_t &\leq \sup_{y \in \X}\Big\{\varphi(y) - \frac{\sfd_-^2(y,B)}{4}\Big\}\,,\\
\liminf_{t \downarrow 0} t\log\int e^{\varphi/t}\,\d\overline{\mu}_t &\geq \sup_{y \in \X}\Big\{\varphi(y) - \frac{\sfd_+^2(y,B)}{4}\Big\}\,,
\end{split}
\end{equation}
where $\sfd_-(x,B) := \min_{y \in B}\sfd(x,y)$ and $\sfd_+(x,B) := \max_{y \in B}\sfd(x,y)$.
\end{Lemma}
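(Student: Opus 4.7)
The plan is to reduce the claim to the asymptotic analysis of $t\log\h_t(e^{\varphi/t})$ on the compact set $B$, treat first the case of bounded Lipschitz $\varphi$ via equi-Lipschitz estimates, and then extend to $\varphi\in C_b(\X)$ via sup/inf convolutions. Starting from $\overline{\mu}_t=\int\mu_t[x]\,\d\sigma_t(x)$ and using that $\sigma_t$ is a probability measure concentrated on $B$, one has
\[
\inf_{x\in B}\h_t(e^{\varphi/t})(x) \;\le\; \int e^{\varphi/t}\,\d\overline{\mu}_t \;\le\; \sup_{x\in B}\h_t(e^{\varphi/t})(x),
\]
so by monotonicity of $t\log(\cdot)$ it is enough to identify the asymptotic behaviour as $t\downarrow0$ of the infimum and supremum over $B$ of $t\log\h_t(e^{\varphi/t})(x)$.

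Assume first that $\varphi$ is bounded and Lipschitz. Proposition \ref{lem:ldp-lipschitz} (applied with its parameter equal to $2$ and $\eps$ equal to our $t$) gives the pointwise convergence
\[
t\log\h_t(e^{\varphi/t})(x) \;\xrightarrow[t\downarrow0]{}\; F(x) := \sup_{y\in\X}\bigl\{\varphi(y) - \tfrac14\sfd^2(x,y)\bigr\}, \qquad x\in\X.
\]
Moreover $t\log\h_t(e^{\varphi/t})$ coincides with $\varphi_2^t$ in the notation of Corollary \ref{cor:easy}, so \eqref{eq:lipcontr-exp} yields $\Lip\bigl(t\log\h_t(e^{\varphi/t})\bigr)\le e^{-Kt}\Lip(\varphi)$, uniformly bounded on $t\in(0,1]$. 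Although Corollary \ref{cor:easy} is stated for $\varphi\in\testi\X$, the estimate extends to bounded Lipschitz $\varphi$ by a standard approximation of the form $\mathfrak{h}_{1/n}(\eta_n\varphi)$ with $\eta_n\in\testi\X$ a cutoff provided by Lemma \ref{le:testcutoff}. Equi-Lipschitzianity together with pointwise convergence, via Arzelà--Ascoli, upgrade to uniform convergence on the compact set $B$. Combining this with $\sup_{x\in B}\sup_y=\sup_y\sup_{x\in B}$ and $\inf_{x\in B}\sfd^2(x,y)=\sfd_-^2(y,B)$ gives
\[
\lim_{t\downarrow0}\sup_{x\in B}t\log\h_t(e^{\varphi/t}) = \sup_y\bigl\{\varphi(y)-\tfrac14\sfd_-^2(y,B)\bigr\},
\]
while the minimax inequality $\inf_{x\in B}\sup_y\ge\sup_y\inf_{x\in B}$ together with $\sup_{x\in B}\sfd^2(x,y)=\sfd_+^2(y,B)$ gives
\[
\liminf_{t\downarrow0}\inf_{x\in B}t\log\h_t(e^{\varphi/t}) \ge \sup_y\bigl\{\varphi(y)-\tfrac14\sfd_+^2(y,B)\bigr\}.
\]

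To pass from bounded Lipschitz to $\varphi\in C_b(\X)$, I consider the sup/inf convolutions
\[
\varphi^L(y):=\sup_{z\in\X}\{\varphi(z)-L\sfd(y,z)\}, \qquad \varphi_L(y):=\inf_{z\in\X}\{\varphi(z)+L\sfd(y,z)\},
\]
both $L$-Lipschitz, bounded by $\|\varphi\|_\infty$, and satisfying $\varphi^L\downarrow\varphi$, $\varphi_L\uparrow\varphi$ pointwise as $L\to\infty$. Monotonicity of $\h_t$ and of $t\log(\cdot)$ yields the sandwich $\int e^{\varphi_L/t}\,\d\overline{\mu}_t\le\int e^{\varphi/t}\,\d\overline{\mu}_t\le\int e^{\varphi^L/t}\,\d\overline{\mu}_t$, so applying the Lipschitz case to $\varphi^L$ and $\varphi_L$ gives, for every $L$,
\[
\limsup_{t\downarrow0}t\log\!\int\! e^{\varphi/t}\d\overline{\mu}_t\le\sup_y\bigl\{\varphi^L(y)-\tfrac{\sfd_-^2(y,B)}{4}\bigr\}, \quad \liminf_{t\downarrow0}t\log\!\int\! e^{\varphi/t}\d\overline{\mu}_t\ge\sup_y\bigl\{\varphi_L(y)-\tfrac{\sfd_+^2(y,B)}{4}\bigr\}.
\]
The lower bound is concluded by $L\to\infty$ since $\sup_L\sup_y=\sup_y\sup_L$ and $\varphi_L\uparrow\varphi$. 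The main obstacle is the upper bound, which demands $\sup_y\{\varphi^L-\sfd_-^2(\cdot,B)/4\}\downarrow\sup_y\{\varphi-\sfd_-^2(\cdot,B)/4\}$: this is precisely where the properness assumption enters. The estimate $\varphi^L-\sfd_-^2(\cdot,B)/4\le\|\varphi\|_\infty-\sfd_-^2(\cdot,B)/4$ ensures that both suprema are approached on the closed bounded set $K:=\{y:\sfd(y,B)\le 2\sqrt{2\|\varphi\|_\infty}\}$, compact by properness. On $K$, Dini's theorem applied to the decreasing convergence of continuous functions $\varphi^L\downarrow\varphi$ to the continuous $\varphi$ delivers uniform convergence and concludes the proof.
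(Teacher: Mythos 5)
Your core mechanism is the same as the paper's: reduce $t\log\int e^{\varphi/t}\,\d\overline\mu_t$ to $\sup_B$ and $\inf_B$ of $\varphi_t:=t\log\h_t(e^{\varphi/t})$, and then exploit the pointwise convergence \eqref{eq:hjb-hopflax} together with the uniform Lipschitz bound \eqref{eq:lipcontr-exp} and the compactness of $B$. Where the paper extracts a convergent subsequence of near-maximizers $x_{t_n}\to x_0\in B$ and uses the equi-Lipschitz bound once, you upgrade to uniform convergence on $B$ via Arzel\`a--Ascoli; these are interchangeable. The genuine structural difference is the order of the approximations: the paper first reduces from $C_b(\X)$ to $\testi\X$ by monotonicity (exactly as in Proposition \ref{lem:ldp-lipschitz}) and only then invokes \eqref{eq:lipcontr-exp}, whereas you run the main argument for bounded Lipschitz data and append a sup/inf-convolution plus Dini layer. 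Your Dini argument for the upper bound is correct and is a clean way to handle the non-monotone quantity $\sup_y\{\varphi^L(y)-\sfd_-^2(y,B)/4\}$; the lower bound via $\varphi_L\uparrow\varphi$ is also fine.

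The one step you should not wave away is the assertion that \eqref{eq:lipcontr-exp} "extends to bounded Lipschitz $\varphi$ by a standard approximation of the form $\mathfrak h_{1/n}(\eta_n\varphi)$". The estimate \eqref{eq:lipcontr-exp} is proved (via the comparison principle of Proposition \ref{pro:3}) only for $\varphi\in\testi\X$, and Lemma \ref{le:testcutoff} as stated gives no quantitative control on $\Lip(\eta_n)$, so $\Lip(\eta_n\varphi)\le\Lip(\eta_n)\|\varphi\|_\infty+\Lip(\varphi)$ could a priori blow up and you would not recover an equi-Lipschitz family of approximants; you also need to check that $\mathfrak h_{1/n}(\eta_n\varphi)\to\varphi$ pointwise. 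This is repairable (on proper $\RCD$ spaces one can build test cutoffs equal to $1$ on $B_n$, supported in $B_{2n}$, with $\Lip(\eta_n)\to0$, and then use the Bakry--\'Emery contraction to control $\Lip(\mathfrak h_{1/n}(\eta_n\varphi))$), but as written it is an unproved claim. The simplest fix is to restructure as the paper does: perform the monotone $C_b\to\testi\X$ reduction first, so that \eqref{eq:lipcontr-exp} applies verbatim and your intermediate Lipschitz step becomes unnecessary.
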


\begin{proof} With the same approximation arguments based on monotonicity used to prove Proposition \ref{lem:ldp-lipschitz}, we can, and will, reduce to the case $\varphi\in\testi\X$.  Fix such $\varphi$, set $\varphi_t := \varphi_2^t$ with $\varphi_{t=2}^{\eps=t}$ defined in \eqref{eq:hjbsemigroup} and recall that by \eqref{eq:lipcontr-exp} we have the uniform estimate
\begin{equation}
\label{eq:lipphit}
L:=\sup_{t\in(0,1)}\Lip(\varphi_t)<\infty\,.
\end{equation}
Note that 
\[
t\log\int e^{\varphi/t}\,\d\overline{\mu}_t  = t\log\int\Big(\int e^{\varphi/t}\, \d\mu_t[x]\Big)\d\sigma_t(x) = t\log\int e^{\varphi_t/t}\,\d\sigma_t 
\]
and thus
\[
\begin{split}
t\log\int e^{\varphi/t}\,\d\overline{\mu}_t \qquad
\left\{\begin{array}{l}
\displaystyle{\leq t\log\Big(\int\exp\Big(\frac{\sup_{B}\varphi_t}{t}\Big)\d\sigma_t\Big) = \sup_B \varphi_t=\varphi_t(x_t)} \\
\ \\
\displaystyle{\geq t\log\Big(\int\exp\Big(\frac{\inf_{B}\varphi_t}{t}\Big)\d\sigma_t\Big) = \inf_B \varphi_t=\varphi_t(x'_t)}
\end{array} \right.
\end{split}
\]
for some well-chosen $x_t,x_t'\in B$. Now let $t_n\downarrow0$ be realizing $\limsup_{t\downarrow0}\sup_B \varphi_t$ and, by compactness of $B$, extract a further subsequence, not relabeled, such that $x_{t_n}\to x_0\in B$.

Then keeping \eqref{eq:lipphit} in mind we have $\sup_B\varphi_{t_n}=\varphi_{t_n}(x_{t_n})\leq L\sfd(x_{t_n},x_0)+\varphi_{t_n}(x_0)$ and letting $n\to\infty$ and recalling \eqref{eq:hjb-hopflax} we get the first inequality in \eqref{eq:lemmino}. The second inequality follows along similar lines.
\end{proof}

We are now in the position to prove the LDP for the Brownian motion.

\begin{Theorem}[LDP for Brownian motion]\label{thm:brownian-upper}
Let $(\X,\sfd,\mm)$ be a proper $\RCD(K,\infty)$ space with $K \in \R$, $\bar x \in \X$ and $(\tilde\B_t)\subset \prob{C([0,1],\X)}$, $I:C([0,1],\X) \to [0,\infty]$ be defined as in the beginning of the section.

Then the family $(\tilde\B_t)_{t>0}$ satisfies a Large Deviation Principle with speed $t^{-1}$ and rate function $I$.
\end{Theorem}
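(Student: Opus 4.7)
I would leverage the Markov property of $\B$, under which the finite-dimensional marginal of the slowed-down motion $\B_t$ at times $0 = t_0 < t_1 < \cdots < t_n = 1$ has density $\prod_{i=0}^{n-1} \hp_{t(t_{i+1}-t_i)}[x_i](x_{i+1})$ (with $x_0 = \bar x$), and derive the path-space LDP from the heat kernel LDP (Theorem \ref{thm:ldp-heatkernel}) by passing to finite-dimensional projections. The supremum of finite-dimensional rates is then glued into the path-space rate $I$ via the representation formula \eqref{eq:reprI}. Exponential tightness, together with the compactness of sublevels of $I$ noted after \eqref{eq:reprI} (which uses that $\X$ is proper), will bridge the finite-dimensional bounds and the topology of $C([0,1],\X)$.

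\textbf{Upper bound on closed sets.} Fix a partition $\mathcal{P}: 0 = t_0 < \cdots < t_n = 1$ and the continuous evaluation map $\pi_\mathcal{P}: \gamma \mapsto (\gamma_{t_0},\ldots,\gamma_{t_n})$. By iterating Lemma \ref{pro:ldp-superposition} (applied at each step with $\sigma_t$ being the joint law of $(\gamma_{t_0},\ldots,\gamma_{t_{i-1}})$ integrated out, suitably localized via exponential tightness), I would show that $(\pi_\mathcal{P})_*\B_t$ satisfies an LDP on $\X^{n+1}$ with rate
$$J_\mathcal{P}(x_0,\ldots,x_n) := \frac{1}{4}\sum_{i=0}^{n-1} \frac{\sfd^2(x_i,x_{i+1})}{t_{i+1}-t_i} \quad \text{if } x_0 = \bar x, \quad +\infty \text{ otherwise}.$$
For closed $C \subset C([0,1],\X)$, continuity of $\pi_\mathcal{P}$ yields
$$\limsup_{t \downarrow 0} t\log\B_t(C) \leq -\inf_{\gamma \in C} J_\mathcal{P}(\gamma_{t_0},\ldots,\gamma_{t_n}),$$
and taking the supremum over all partitions together with \eqref{eq:reprI} gives the desired bound $-\inf_{\gamma \in C} I(\gamma)$, provided the inf and sup can be interchanged (which follows from standard monotone arguments since refining partitions only increases $J_\mathcal{P}$).

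\textbf{Lower bound on open sets.} Given an open $O \subset C([0,1],\X)$ and $\gamma \in O$ with $I(\gamma) < \infty$, for $\eps > 0$ I would choose a partition $\mathcal{P}$ so fine that $J_\mathcal{P}(\gamma_{t_0},\ldots,\gamma_{t_n}) \geq I(\gamma) - \eps$, together with balls $B_i$ centered at $\gamma_{t_i}$ and a threshold $\eta > 0$ so that every curve passing through each $B_i$ at time $t_i$ and oscillating at most $\eta$ between consecutive $t_i$'s lies in $O$ (this is possible by the $\tfrac12$-Hölder continuity implied by $I(\gamma)<\infty$). Iterating Lemma \ref{pro:ldp-superposition} backward from $i=n$ to $i=0$ gives
$$\liminf_{t \downarrow 0} t\log\B_t\Big(\bigcap_i\{\gamma_{t_i}\in B_i\}\Big) \geq -\sum_i \sup_{x_i \in B_i, x_{i+1} \in B_{i+1}} \frac{\sfd^2(x_i,x_{i+1})}{4(t_{i+1}-t_i)} \to -J_\mathcal{P}(\gamma_{t_0},\ldots,\gamma_{t_n})$$
as $\diam(B_i) \to 0$. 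The passage from this cylinder-set bound to $\B_t(O)$ requires that intersecting with the oscillation constraint costs only $e^{-o(1)/t}$, which is the content of exponential tightness.

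\textbf{Main obstacle.} The delicate point is exponential tightness, namely producing compact sets $K_\alpha \subset C([0,1],\X)$ (equicontinuous, hence compact by Arzelà-Ascoli since $\X$ is proper) with $\limsup t\log\B_t(K_\alpha^c) \leq -\alpha$. By the Markov property this reduces to Gaussian-type modulus-of-continuity estimates of the form $\B_t(\sup_{s \in [0,\delta]} \sfd(\gamma_s,\bar x) > \eta) \lesssim \exp(-\eta^2/(4t\delta) + o(1/t))$, which can be obtained by applying the heat kernel LDP upper bound (Theorem \ref{thm:ldp-heatkernel}) to shifted Brownian paths and summing over a finite mesh of start times. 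Alternatively, as the introduction suggests, one may recast the whole argument through the $\Gamma$-convergence characterization of Theorems \ref{thm:lower-ldp}--\ref{thm:upper-ldp}: the relative entropy $H(\,\cdot\,|\,\B_t)$ factorizes over partition times via the Markov structure of $\B_t$, so that the required $\Gamma$-convergence on path space reduces to the already-proved one for heat kernel measures plus a passage to the limit in the mesh size, which bypasses ad hoc tightness estimates.
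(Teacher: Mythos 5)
Your finite-dimensional ingredients are the right ones and coincide with the paper's preliminary step: the Markov property factorizes $\B_t$ over a partition, Lemma \ref{pro:ldp-superposition} (fed by Theorem \ref{thm:ldp-heatkernel}) controls each conditional transition up to an error $Cr$ coming from the radius of the localizing balls, and \eqref{eq:reprI} glues the partition rates $J_{\mathcal P}$ into $I$. The divergence is in how one passes from cylinder events to the uniform topology of $C([0,1],\X)$. You route everything through exponential tightness, and this is where the proposal has a genuine gap: the estimate $\B_t\bigl(\sup_{s\in[0,\delta]}\sfd(\gamma_s,\bar x)>\eta\bigr)\lesssim \exp(-\eta^2/(4t\delta)+o(1/t))$ involves a supremum over a \emph{continuum} of times, and ``applying the heat kernel LDP upper bound \ldots and summing over a finite mesh of start times'' only bounds the probability that the path is far at the finitely many mesh times --- it says nothing about excursions between them. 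Closing this would require a maximal inequality (Ottaviani/reflection-type, via the strong Markov property together with heat-kernel lower bounds that are \emph{uniform in the starting point}, or uniform integrated Gaussian upper bounds); none of this is supplied by Theorem \ref{thm:ldp-heatkernel}, whose conclusion is pointwise in $x$ and not quantitative. The same missing modulus-of-continuity control is what your lower bound silently invokes when you intersect the cylinder event with the oscillation constraint ``at cost $e^{-o(1)/t}$''. A secondary, fixable imprecision: iterating Lemma \ref{pro:ldp-superposition} does not directly yield a clean LDP on $\X^{n+1}$ with rate $J_{\mathcal P}$, because its two bounds involve $\sfd_-(\cdot,B)$ and $\sfd_+(\cdot,B)$, which only match in the limit $\diam(B)\downarrow 0$; one gets bounds on products of small balls up to $O(r)$, which is what the paper actually uses.

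The paper's proof is designed precisely to avoid path-space exponential tightness. It invokes the equivalence (i)$\Leftrightarrow$(iii') of Theorems \ref{thm:lower-ldp}--\ref{thm:upper-ldp} and tests the rescaled entropies $tH(\cdot\,|\,\B_t)$ only at Dirac masses $\delta_{\bar\gamma}$: the $\Gamma$-$\liminf$ follows from conditioning an arbitrary $\nu_t\rightharpoonup\delta_{\bar\gamma}$ on the cylinder set $U_r=\cap_i\{\gamma_{t_i}\in B_r(\bar\gamma_{t_i})\}$ and a Jensen-type entropy decomposition, using only $\limsup_t t\log\B_t(\bar U_r)\leq -\ell+Cr$; the $\Gamma$-$\limsup$ builds the recovery sequence as $\B_t$ conditioned on $U_r$, with concentration near the compact set of piecewise geodesics (compact because $\X$ is proper) obtained from the already-proved upper bound, followed by a double diagonalization in $r$ and in the mesh of the partition. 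Only finitely many time marginals are ever controlled. Your closing remark about recasting the argument through the $\Gamma$-convergence characterization is indeed the paper's strategy, but as stated it is an unworked alternative rather than a proof; the entropy conditioning and the compactness/diagonalization in the recovery step are the substantive content you would still need to supply.
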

\begin{proof}\ \\

\noindent{\textbf{Preliminary considerations}}. We shall use the equivalence between $(i)$ and $(iii')$ in Theorems \ref{thm:lower-ldp},  \ref{thm:upper-ldp}. Let $\bar \gamma\in C([0,1],\X)$ and note that if it is constant, the validity of $(iii')$ in both these theorems is obvious: the $\Gamma$-$\liminf$ follows from the non-negativity of the entropy, while for the $\Gamma$-$\limsup$ we pick $\tilde\B_t$ as recovery sequence (since $\B$ is concentrated on curves starting from $\bar x$, the $\tilde\B_t$'s weakly converge to the curve constantly equal to $\bar x$). Thus we assume that $\bar\gamma$ is not constant and, without loss of generality, pick $n \in \N$ and $0 = t_0 < \ldots < t_n \le 1$ such that $\sfd(\bar\gamma_{t_{i+1}}, \bar \gamma_{t_i}) > 0$ for every $i$ (recall that \eqref{eq:reprI} holds even if we only take the supremum over partitions with this property). Then for  $r>0$ we define 
\[
U_{r,i} := \{\gamma \,:\, \gamma_{t_i}\in B_{r(i+1)}(\bar \gamma_{t_i})\}, \quad i=0,\ldots,n \qquad \textrm{and} \qquad U_r^n := \bigcap_{i=0}^n U_{r,i}
\]
and note that these are open sets in $C([0,1],\X)$. We let 
\[
\ell:=\frac14\sum_{i=0}^{n-1}  \frac{\sfd^2(\bar\gamma_{t_{i+1}},\bar\gamma_{t_i})}{t_{i+1} - t_{i}}>0
\]
and claim that for some $C>0$ depending on $\bar\gamma,n,(t_i)$ the bounds
\begin{subequations}
\label{eq:claimll}
\begin{align}
\label{eq:claimlimi}
\liminf_{t\downarrow0}t\log(\tilde\B_t(U_r^n))&\geq -\ell+Cr\,,\\
\label{eq:claimlims}
\limsup_{t\downarrow0}t\log(\tilde\B_t(\overline U_r^n))&\leq -\ell+Cr\,,
\end{align}
\end{subequations}
hold for every $r>0$ sufficiently small. To see this we argue by induction over $n \ge 1$.  Recalling that the Brownian motion has the Markov property it holds, for every $n \ge 1$,
\begin{equation}\label{eq:factorization}
\tilde\B_t(U_r^n) = \tilde\B_t \left( U_{r}^{n-1} \right ) \int \mu_{t (t_n-t_{n-1})}[x_{n-1}]( U_{r, n} ) \,\d\sigma_t(x_{n-1})
\end{equation}
where, for $E\subseteq \X$ Borel,
\[
\sigma_t(E) = \frac{\tilde\B_t\left( \left\lbrace \gamma\, : \, \gamma_{t_{n-1}} \in E \right\rbrace \cap U^{n-1}_r \right)}{\tilde\B_t \left(  U_r^{n-1}\right )}\,.
\]
Note that if $t$ is sufficiently small, then the definition is meaningful, because $\tilde\B_t \left( U^{n-1} \right)>0$. Indeed, if $n=1$ we have $\tilde\B_t \left(  U_r^{0}\right ) = 1$ because $\B$ starts at $\bar{x}$, while if $n >1$ we have by inductive assumption that \eqref{eq:claimlimi} holds, with $n-1$ instead of $n$.

Starting from \eqref{eq:factorization} and arguing by induction, we see that \eqref{eq:claimlimi} follows if we show that the bound
\begin{equation}
\label{eq:perclms}
\liminf_{t\downarrow0} t\log\left( \int \mu_{t (t_n-t_{n-1})}[x_{n-1}]( U_{r, n} ) \,\d\sigma_t(x_{n-1}) )\right) \geq -\frac14 \frac{\sfd^2(\bar\gamma_{t_n},\bar\gamma_{t_{n-1}})}{t_{n} - t_{n-1}} + Cr
\end{equation}
holds for every $r>0$ sufficiently small, where $C$ depends on $\bar{\gamma}$, $n$ and $(t_i)_{i=1}^n$. Since the measure $\sigma_t$ is by construction concentrated on $\overline B_{rn}(\bar\gamma_{t_{n-1}})$, the bound \eqref{eq:perclms} follows from Theorem \ref{thm:lower-ldp} and Lemma \ref{pro:ldp-superposition} above applied with $\sigma_t$'s just defined and $B := \overline B_{rn}(\bar\gamma_{t_{n-1}})$. More precisely: by Theorem \ref{thm:lower-ldp} the second inequality in \eqref{eq:lemmino} implies that the measures $\int\mu_{t(t_{n}-t_{n-1})}[x]\,\d \sigma_t$ satisfy a LDP lower bound with rate function $I := \frac14 \sfd_+^2(\cdot, \overline B_{rn}(\bar\gamma_{t_{n-1}}))$, so that if we apply the definition of LDP lower bound to the open set $O := B_{r(n+1)}(\bar \gamma_{t_{n}})$ we obtain
\begin{equation}\label{eq:intermezzo}
\liminf_{t\downarrow0} t\log\left(\int \mu_{t (t_n-t_{n-1})}[x_{n-1}]( U_{r, n} ) \,\d\sigma_t(x_{n-1}) \right) \geq -\inf_O I\,;
\end{equation}
then note that $\sfd_+(x, \overline B_{rn}(\bar\gamma_{t_n})) \leq \sfd(x,\bar\gamma_{t_n}) + rn$,
whence
\[
\inf_{x \in O}\sfd_+(x, \overline B_{rn}(\bar\gamma_{t_n})) \leq \big(\sfd(\bar\gamma_{t_{n}},\bar\gamma_{t_{n-1}}) + rn - r(n+1)\big)^+ = \big(\sfd(\bar\gamma_{t_{n}},\bar\gamma_{t_{n-1}}) - r\big)^+\,,
\]
and this yields
\[
\inf_O I \leq \frac14 \big((\sfd(\bar\gamma_{t_{n}}, \bar\gamma_{t_{n-1}})-r)^+\big)^2 \leq \frac14 \sfd^2(\bar\gamma_{t_{n}}, \bar\gamma_{t_{n-1}})-Cr
\]
for every $r>0$ sufficiently small and some $C$, because $\sfd^2(\bar\gamma_{t_{n}}, \bar\gamma_{t_{n-1}})>0$. Plugging this inequality into \eqref{eq:intermezzo}, we conclude that \eqref{eq:perclms} holds.
 
The proof of \eqref{eq:claimlims} follows the very same lines replacing each $U_{r,n}$ with $\overline{U}_{r,n}$ and accordingly defining $\bar{\sigma}_t$ in order to write the suitable analog of \eqref{eq:factorization}. Indeed, we get
\[
\limsup_{t\downarrow0} t\log\left(\int \mu_{t (t_n-t_{n-1})}[x_{n-1}]( \overline{U}_{r, n} ) \,\d \bar{\sigma}_t(x_{n-1}) \right) \leq -\inf_D I\,,
\]
where $I := \frac14 \sfd_-^2(\cdot, \overline B_{rn)}(\bar\gamma_{t_{n-1}}))$ and $D := \overline B_{r(n+1)}(\bar \gamma_{t_{n}})$. At this point, one should observe that $\sfd(x,z) \geq \sfd(x,\bar\gamma_{t_{n-1}}) - \sfd(z,\bar\gamma_{t_{n-1}})$, which implies $\sfd_-(x,B_{rn}(\bar\gamma_{t_{n-1}})) \geq \sfd(x,\bar\gamma_{t_{n-1}}) - rn$ and therefore $\inf_D \sfd_-(x,B_{rn}(\bar\gamma_{t_{n-1}})) \geq (\sfd(\bar\gamma_{t_{n}},\bar\gamma_{t_{n-1}}) - r(2n+1))^+$. This allows us to rewrite the inequality above as
\[
\limsup_{t\downarrow0} t\log\left(\int \mu_{t (t_n-t_{n-1})}[x_{n-1}]( \overline{U}_{r, n} ) \,\d \bar \sigma_t(x_{n-1}))\right) \leq -\frac14 \frac{\sfd^2(\bar\gamma_{t_n},\bar\gamma_{t_{n-1}})}{t_{n} - t_{n-1}} + Cr\,,
\]
whence \eqref{eq:claimlims}, by induction.
 
\noindent{\textbf{Upper bound}}.
Let $(\nu_t)\subset \prob{C([0,1],\X)}$ be weakly converging to $\delta_{\bar \gamma}$ and, with the same notation as above, note that $\bar\gamma$ belongs to the open set $U_r$. Hence $a=a_{t,r}:=\nu_t(U_r)\to1$. Now condition both $\nu_t$ and $\tilde\B_t$ to $U_r$, i.e.\ introduce the measures
\begin{equation}
\label{eq:condB}
\hat\nu_{t,r} := a^{-1}\nu_t\restr{U_r}\qquad\text{ and }\qquad \hat\B_{t,r} := b^{-1}\tilde\B_t\restr{U_r},\quad\text{ where }b=b_{t,r} := \tilde\B_t(U_r)
\end{equation}
and let $\rho_t:=\frac{\d\nu_t}{\d\tilde\B_t}$ (if $\nu_t \not\ll \tilde\B_t$ then $H(\nu_t\,|\,\tilde\B_t) = +\infty$ so that $\nu_t$ can be disregarded in studying the LDP upper bound). Also, put $u(z) := z\log z$ and note that direct (and simple) algebraic manipulations in conjunction with the convexity of $u$ and Jensen's inequality give
\[
\begin{split}
H(\nu_t\,|\,\tilde\B_t) & = H(\hat\nu_{t,r}\,|\,\hat\B_{t,r}) + b(1-a^{-1})\fint_{U_r}u(\rho_t)\,\d\tilde\B_t + (1-b)\fint_{U_r^c}u(\rho_t) \,\d\tilde\B_t - \log(\tfrac{b}{a})\\
& \geq b(1-a^{-1})u\Big(\underbrace{\fint_{U_r} \rho_t\,\d\tilde\B_t}_{=\frac{a}{b}}\Big)+(1-b)u\Big(\underbrace{\fint_{U_r^c}\rho_t \,\d\tilde\B_t}_{=\frac{1-a}{1-b}}\Big)-\log(\tfrac{b}{a})\\
&=a\big(\log a-\log b\big)+(1-a)\big(\log(1-a)-\log(1-b)\big)\,.
\end{split}
\]
Since we know that as $t\downarrow0$ we have $a=a_{t,r}\to  1$ and, by \eqref{eq:claimlims}, that $b=b_{t,r}\to 0$ if $r$ is sufficiently small (as $\ell>0$), recalling also  \eqref{eq:claimlims} we deduce that
\[
\liminf_{t\downarrow0}tH(\nu_t\,|\,\tilde\B_t)\geq \ell-Cr\,.
\]
This bound is true for every $r>0$ and every partition of $[0,1]$, hence first letting $r\downarrow0$ and then taking the supremum over all partitions, we conclude  recalling \eqref{eq:reprI}  and  Theorem \ref{thm:upper-ldp}.

\noindent{\textbf{Lower bound}}. With the same notation as in the beginning of the proof, we   let $K\subset C([0,1],\X)$ be the set of curves  $\gamma$ that are geodesic on the intervals $[t_i,t_{i+1}]$ and such that $\gamma_{t_i}=\bar\gamma_{t_i}$ for every $i=0,\ldots,n$. Then clearly $K$ is non-empty (as $\X$ is geodesic), is contained in  $U_r$ for every $r>0$ and  $I(\gamma)=\ell$ for every $\gamma\in K$. Since $\X$ is proper, $K$ is also compact.

Now let $V \subset C([0,1],\X)$ be an open neighbourhood of  $K$. We claim that for every  $r>0$ sufficiently small we have
\[
\alpha:=\inf_{\overline U_r\setminus V}I>\ell-Cr\,, 
\]
where $C$ is the constant in \eqref{eq:claimll}. Indeed, if this were not the case we would have curves $\gamma_r\in \overline U_r\setminus V$ with $\liminf_rI(\gamma_r)\leq \ell$. By the compactness of the sublevels of $I$, there would be a sequence $r_j\downarrow0$ realizing the $\liminf_rI(\gamma_r)$ converging to some  $\gamma$. Then $\gamma$ should be outside $V$, such that $\gamma_{t_i}=\bar\gamma_{t_i}$ for every $i=0,\ldots,n$ and with $I(\gamma)=\ell$. It is readily verified that this last two conditions imply $\gamma\in K$, which however contradicts $\gamma\notin V$.

Hence our claim is true and by the LD upper bound that we already proved we have
\[
\limsup_{t\downarrow0}t\log(\tilde\B_t(\overline U_r\setminus V))\leq -\alpha\,.
\]
Considering the conditioning $\hat\B_{t,r}$ of $\tilde\B_t$ as introduced before in \eqref{eq:condB}, this latter bound and \eqref{eq:claimlimi} give
\[
\limsup_{t\downarrow0}t\log(\hat\B_{t,r}(V^c)) \leq \limsup_{t\downarrow0} t\log\Big(\frac{\tilde\B_t(\overline U_r\setminus V)}{\tilde\B_t(U_r)}\Big)\leq \ell-Cr-\alpha<0\,,
\]
thus forcing $\hat\B_{t,r}(V^c) \to 0$. In summary, using the very definition of $\hat\B_{t,r}$ first and \eqref{eq:claimlimi} then, we have that
\[
\limsup_{r\downarrow0}\limsup_{t\downarrow0}t H(\hat\B_{t,r}\,|\,\tilde\B_t) = -\liminf_{r\downarrow0}\liminf_{t\downarrow0}t \log (\tilde\B_t(U_r)) \leq \ell \leq I(\bar \gamma)
\]
and that for every $r>0$ sufficiently small the mass of $\hat\B_{t,r}$ tends to be concentrated, as $t\downarrow0$, in the neighbourhood $V$ of $K$. Since this is true for every such neighbourhood $V$ and $K$ is compact, by a diagonal argument we can find $r(t)$ going to 0 sufficiently slow so that the family $(\hat\B_{t,r(t)})_{t\in(0,1)}$ satisfies
\begin{equation}
\label{eq:quasi}
\limsup_{t\downarrow0}t H(\hat\B_{t,r(t)}\,|\,\tilde\B_t)\leq I(\gamma)\,,
\end{equation}
is tight and every weak limit is concentrated on $K$. 

Now we consider a sequence of partitions indexed by $j$ getting finer and finer: it is clear from the continuity of $\bar\gamma$ that the corresponding compact sets $K_j$ converge to $\{\bar\gamma\}$ in the Hausdorff distance. For each of these partitions we have a corresponding family $(\hat B_{t,r(t,j),j})_{t>0}$ satisfying \eqref{eq:quasi} with cluster points supported on $K_j$. Hence by a further diagonalization we can find $j(t)$ going to $+\infty$ so slowly that 
\[
\limsup_{t\downarrow0}t H(\hat\B_{t,r(t,j),j(t)}\,|\,\tilde\B_t) \leq I(\gamma)\,,
\]
and since the construction forces the weak convergence of $\hat\B_{t,r(t,j),j(t)}$ to $\delta_{\bar \gamma}$, the conclusion follows by Theorem \ref{thm:lower-ldp}.
\end{proof}

\section{On the convergence of the Schr\"odinger problem to the Monge-Kantorovich one}

As a consequence of Theorems \ref{thm:ldp-heatkernel} and \ref{thm:brownian-upper}, we can prove that, also in the setting of proper $\RCD(K,\infty)$ spaces, the Schr\"odinger problem $\Gamma$-converges to the quadratic optimal transport problem, which seems new. In order to state this fact in a rigorous way, let us first introduce the measures $\sfR^\eps \in \prob{C([0,1],\X)}$ and $\sfR_{0\eps} \in \prob{\X^2}$ as
\[
\sfR^\eps(\d\gamma) := \int\sfB_{\eps/2}[x](\d\gamma)\,\d\mu_0(x) \quad \textrm{and} \quad \sfR_{0\eps}(\d x\d y) := (\e_0,\e_1)_* \sfR^\eps = \hp_{\eps/2}[x](y)\mm(\d y)\mu_0(\d x)\,,
\]
where $\sfB_{\eps/2}[x]$ denotes the slowed-down Brownian motion starting at $x$ and $\mu_0 \in \prob\X$ is arbitrary but fixed. Let us also define the \emph{entropic cost}, in static and dynamic form:
\[
\mathscr{C}_\eps(\mu_0,\nu) := \inf H(\ggamma\,|\,\sfR_{0\eps}) \qquad \textrm{and} \qquad \mathscr{C}'_\eps(\mu_0,\nu) := \inf H(Q\,|\,\sfR^\eps),
\]
where in the first case the infimum runs over all couplings $\ggamma \in \prob{\X^2}$ between $\mu_0$ and $\nu \in \prob\X$, while in the second case over all measures $Q \in \prob{C([0,1],\X)}$ with $(\e_0)_*Q=\mu_0$ and $(\e_1)_*Q = \nu$. The minimization problems written above are the so-called static and dynamic form of the Schr\"odinger problem (the reader is referred to \cite{Leonard14} for more details on the topic). With this premise, we have the following

\begin{Corollary}\label{cor:schrod-to-wasser}
Let $(\X,\sfd,\mm)$ be a proper $\RCD(K,\infty)$ space with $K \in \R$. Then
\[
\Gamma\textrm{-}\lim_{\eps \downarrow 0} \eps H(\cdot\,|\,\sfR_{0\eps}) + \iota_0 = \int \sfd^2(x,y)\,\d \cdot + \iota_0 \qquad \textrm{in } \prob{\X^2}\,,
\]
where $\iota_0$ is the indicator function of the marginal constraint $\ggamma_0 = \mu_0$ (here $\ggamma_0$ denotes the projection of $\ggamma$ on the first factor), and
\[
\Gamma\textrm{-}\lim_{\eps \downarrow 0} \eps H(\cdot\,|\,\sfR^\eps) + \iota'_0 = \iint_0^1 |\dot{\gamma}_t|^2\,\d t\d \cdot + \iota'_0 \qquad \textrm{in } \prob{C([0,1],\X)}\,,
\]
where $\iota'_0$ is the indicator function of the marginal constraint $(\e_0)_*Q = \mu_0$.

In particular, for every $\mu_1 \in \prob\X$ there exists $(\mu_1^\eps) \subset \prob\X$ with $\mu_1^\eps \rightharpoonup \mu_1$ as $\eps \downarrow 0$ such that
\begin{equation}\label{eq:schrod-to-wasser}
\lim_{\eps \downarrow 0} \eps\mathscr{C}_\eps(\mu_0,\mu_1^\eps) = \lim_{\eps \downarrow 0} \eps\mathscr{C}'_\eps(\mu_0,\mu_1^\eps) = \frac{1}{2} W_2^2(\mu_0,\mu_1)\,.
\end{equation}
\end{Corollary}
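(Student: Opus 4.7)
The approach will be to deduce both $\Gamma$-convergences from the LDPs of Theorems \ref{thm:ldp-heatkernel} and \ref{thm:brownian-upper}, integrated over $\mu_0$, via the equivalence between LDP and $\Gamma$-convergence of rescaled relative entropies recorded in Theorems \ref{thm:lower-ldp} and \ref{thm:upper-ldp}. Once these $\Gamma$-convergences are in place, the limit \eqref{eq:schrod-to-wasser} will follow from the standard passage from $\Gamma$-convergence to convergence of infima, after choosing $\mu_1^\eps$ to be the second marginal of a suitable recovery sequence.

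The starting observation for the static case is the fiberwise decomposition of the reference measure, $\sfR_{0\eps} = \mu_0 \otimes \mu_{\eps/2}[x]$: for any $\ggamma \in \prob{\X^2}$ with $\ggamma_0 = \mu_0$ and disintegration $\ggamma = \mu_0\otimes\ggamma_x$, the chain rule for relative entropy gives $H(\ggamma\,|\,\sfR_{0\eps}) = \int H(\ggamma_x\,|\,\mu_{\eps/2}[x])\,\d\mu_0(x)$, and hence the Donsker-Varadhan variational formula, applied fiberwise in $x$, yields
\[
\eps H(\ggamma\,|\,\sfR_{0\eps}) = \sup_{\varphi \in C_b(\X^2)} \Big\{ \int \varphi\,\d\ggamma - \int \eps\log \h_{\eps/2}\bigl(e^{\varphi(x,\cdot)/\eps}\bigr)(x)\,\d\mu_0(x)\Big\}.
\]
For the $\Gamma$-$\liminf$: given $\ggamma^\eps\weakto\ggamma$ with common first marginal $\mu_0$, Proposition \ref{lem:ldp-lipschitz} applied to the section $y\mapsto\varphi(x,y)$ for each $x$ gives the pointwise convergence $\eps\log\h_{\eps/2}(e^{\varphi(x,\cdot)/\eps})(x) \to \sup_y\{\varphi(x,y) - \sfd^2(x,y)/2\}$, together with the uniform bound $\|\varphi\|_\infty$; dominated convergence then passes the limit inside the $\mu_0$-integral. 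Testing with the bounded continuous functions $\varphi_N(x,y):=\min\{\sfd^2(x,y)/2,N\}$, whose Hopf-Lax transform $\sup_y(\varphi_N(x,y) - \sfd^2(x,y)/2)$ is identically $0$ (attained at $y=x$), and letting $N\uparrow\infty$ by monotone convergence gives $\liminf_\eps \eps H(\ggamma^\eps\,|\,\sfR_{0\eps}) \geq \tfrac12\int\sfd^2\,\d\ggamma$. For the $\Gamma$-$\limsup$: applying Theorem \ref{thm:lower-ldp}(iii) pointwise in $x$ (choosing the recovery sequence measurably via a standard selection argument) produces $\ggamma^\eps_x\weakto\ggamma_x$ with $\eps H(\ggamma^\eps_x\,|\,\mu_{\eps/2}[x]) \to \tfrac12\int\sfd^2(x,y)\,\d\ggamma_x(y)$; setting $\ggamma^\eps:=\mu_0\otimes\ggamma^\eps_x$ yields a recovery sequence for $\ggamma$, with weak convergence of the joint plans following from dominated convergence applied to $\int\int\varphi(x,y)\,\d\ggamma^\eps_x(y)\,\d\mu_0(x)$, and the entropy limit secured by a further Fatou/DCT and diagonal argument.

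The dynamic $\Gamma$-convergence on $\prob{C([0,1],\X)}$ proceeds in exactly the same manner: disintegrate $\sfR^\eps = \mu_0\otimes\sfB_{\eps/2}[x]$ and use the Brownian motion LDP of Theorem \ref{thm:brownian-upper} (in its $\Gamma$-form), with the kinetic-energy rate $\tfrac12\int_0^1|\dot\gamma_t|^2\,\d t$ replacing the static Hopf-Lax. For \eqref{eq:schrod-to-wasser}, choose $\ggamma^\star\in\prob{\X^2}$ optimal for $\tfrac12 W_2^2(\mu_0,\mu_1)$, let $\ggamma^\eps$ be its recovery sequence from the $\Gamma$-$\limsup$ above, and set $\mu_1^\eps:=(\ggamma^\eps)_1$; then $\mu_1^\eps\weakto\mu_1$ and $\eps\mathscr{C}_\eps(\mu_0,\mu_1^\eps)\to\tfrac12 W_2^2(\mu_0,\mu_1)$ by construction. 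The corresponding claim for $\mathscr{C}'_\eps$ follows from the dynamic $\Gamma$-convergence, picking a recovery sequence built from a dynamic optimal plan concentrated on $W_2$-geodesics (see Theorem \ref{thm:bm}).

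The principal technical obstacle is the passage from the pointwise-in-$x$ $\Gamma$-convergence of $\eps H(\cdot\,|\,\mu_{\eps/2}[x])$ to joint $\Gamma$-convergence on $\prob{\X^2}$ under weak convergence of $\ggamma^\eps$: weak convergence of the joint measures does not entail $\mu_0$-a.e.~weak convergence of the disintegrated kernels. This is circumvented by relying on the integrated Donsker-Varadhan formulation above, which requires no direct control on the $\ggamma^\eps_x$; the $\Gamma$-$\limsup$ step further demands a classical measurable selection of near-optimal kernels, which is standard but non-trivial, and this is the step I expect to absorb most of the writing effort.
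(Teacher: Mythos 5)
Your route is genuinely different from the paper's: the paper proves this corollary in three lines, by quoting \cite[Proposition 2.3]{Leonard14} for the identity $\mathscr{C}_\eps=\mathscr{C}'_\eps$ and then invoking the general transfer results of L\'eonard \cite[Propositions 3.1, 3.4, Corollaries 3.2, 3.5, Theorems 3.3, 3.6]{Leonard12}, which say precisely that the LDPs of Theorems \ref{thm:ldp-heatkernel} and \ref{thm:brownian-upper} (uniformly over the starting point) imply the two $\Gamma$-convergences. You are instead re-deriving L\'eonard's results from scratch. That is legitimate and instructive, and your $\Gamma$-$\liminf$ for the static problem is essentially complete: the conditional Donsker--Varadhan inequality (only the inequality $\geq$ is needed there, which follows from the chain rule plus the fiberwise variational formula), the pointwise Hopf--Lax convergence from Proposition \ref{lem:ldp-lipschitz} with the uniform bound $\|\varphi\|_\infty$ from the maximum principle, and the test functions $\min\{\sfd^2/2,N\}$ all work as you describe.

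However, several steps of your sketch conceal real work that L\'eonard's paper is doing for you. (1) In the $\Gamma$-$\limsup$, the fiberwise recovery sequences $\ggamma^\eps_x$ must not only be selected measurably in $x$ but must admit a $\mu_0$-integrable bound on $\eps H(\ggamma^\eps_x\,|\,\mu_{\eps/2}[x])$ uniform in $\eps$ for your ``reverse Fatou'' step; Theorem \ref{thm:lower-ldp}(iii) gives no such quantitative control, so this needs an explicit construction (e.g.\ via conditioned heat kernels on shrinking balls) rather than an abstract selection. (2) The dynamic case is \emph{not} ``exactly the same'': the kinetic energy is only lower semicontinuous on $C([0,1],\X)$, so its truncations are not in $C_b$ and cannot be used directly as test functions in the Donsker--Varadhan bound; you must first pass to the continuous discrete-sum functionals of \eqref{eq:reprI} and then take a supremum over partitions. (3) The deduction of \eqref{eq:schrod-to-wasser} needs equicoercivity (exponential tightness of $\sfR_{0\eps}$, available here from properness) to turn $\Gamma$-convergence into convergence of infima, and, more importantly, \eqref{eq:schrod-to-wasser} asserts a \emph{single} sequence $\mu_1^\eps$ working for both $\mathscr{C}_\eps$ and $\mathscr{C}'_\eps$, whereas you produce two a priori different recovery sequences. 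The clean fix is the identity $\mathscr{C}_\eps=\mathscr{C}'_\eps$ (or at least the easy inequality $\mathscr{C}_\eps\leq\mathscr{C}'_\eps$ from the data-processing inequality combined with the static $\Gamma$-$\liminf$), which you never invoke. None of these gaps is fatal, but each requires a genuine argument before the proof is complete.
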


\begin{proof}
It is a general fact that $\mathscr{C}_\eps(\mu_0,\nu) = \mathscr{C}'_\eps(\mu_0,\nu)$, see for instance \cite[Proposition 2.3]{Leonard14}. The statement about the static Schr\"odinger problem follows then from Theorem \ref{thm:ldp-heatkernel}  and \cite[Proposition 3.1, Corollary 3.2 and Theorem 3.3]{Leonard12}, while the one on the dynamical Schr\"odinger problem from Theorem \ref{thm:brownian-upper} and \cite[Proposition 3.4, Corollary 3.5 and Theorem 3.6]{Leonard12}.
\end{proof}

A tightly-linked result has recently been obtained by the second-named author in \cite{MTV20}, as a byproduct of a more general and abstract $\Gamma$-convergence statement. In a few words, in \cite[Theorem 4.6 and Section 6.1]{MTV20} it is shown that on a proper $\RCD(K,\infty)$ space $(\X,\sfd,\mm)$ for all $\mu_0,\mu_1 \in \probt\X$ there exist $\mu_0^\eps \rightharpoonup \mu_0$ and $\mu_1^\eps \rightharpoonup \mu_1$ such that
\begin{equation}\label{eq:dynamical-to-wasser}
\lim_{\eps \downarrow 0} \eps\tilde{\mathscr{C}}_\eps(\mu_0^\eps,\mu_1^\eps) = \frac{1}{2} W_2^2(\mu_0,\mu_1)\,,
\end{equation}
where
\begin{equation}\label{eq:bbs}
\tilde{\mathscr{C}}_\eps(\mu_0,\mu_1) := \inf\Big\{\frac{1}{2\eps}\iint_0^1 |v_t|^2\rho_t\,\dt\d\mm + \frac{\eps}{8}\iint_0^1 |\nabla\log\rho_t|^2\rho_t\,\dt\d\mm\Big\}
\end{equation}
and the infimum runs over all couples $(\mu_t,v_t)$, $\mu_t=\rho_t\mm$, solving the continuity equation $\partial\mu_t + {\rm div}(v_t\mu_t) = 0$ in the weak sense and with $\mu_0,\mu_1$ as initial and final constraints. Moreover, $\mu_0^\eps$ can be chosen equal to $\mu_0$ for all $\eps > 0$ provided $H(\mu_0\,|\,\mm) < \infty$ (analogously for $\mu_1^\eps$), see \cite[Theorem 4.3, Corollary 4.4 and Section 6.1]{MTV20}

After \cite{GigTam20}, $\tilde{\mathscr{C}}_\eps = \mathscr{C}_\eps$ on $\RCD^*(K,N)$ spaces with $N<\infty$. Therefore \eqref{eq:schrod-to-wasser} and \eqref{eq:dynamical-to-wasser} are equivalent and Corollary \ref{cor:schrod-to-wasser} provides an alternative proof of them. If $N=\infty$ instead, the Schr\"odinger problem defined above and its representation \`a la Benamou--Brenier \eqref{eq:bbs} are not known to be equivalent. Hence, given the present state of the art, Corollary \ref{cor:schrod-to-wasser} and in particular \eqref{eq:schrod-to-wasser} are new and the latter, although heuristically equivalent, is independent of \eqref{eq:dynamical-to-wasser}.

\section{Declarations}
\noindent{\bf Acknowledgements:} The authors would like to thank the anonymous referee for the detailed report and the fruitful suggestions.

\noindent{\bf Ethical approval:} Not applicable.

\noindent{\bf Competing Interests:} The authors have no competing interests of financial or personal nature.

\noindent{\bf Authors' contributions:} All authors made equal contributions to the preparation of this manuscript.

\noindent{\bf Funding:} LT author gratefully acknowledges support by the European Union through the ERC-AdG ``RicciBounds", PI Prof.\ K.-T.\ Sturm, since most of this work was written while he was a member of the Institut f\"ur Angewandte Mathematik of the University of Bonn. DT has been supported by the HPC Italian National Centre for HPC, Big Data and Quantum Computing - Proposal code CN1 CN00000013, CUP I53C22000690001, and by the INdAM-GNAMPA project 2023 ``Teoremi Limite per Dinamiche di Discesa Gradiente Stocastica: Convergenza e Generalizzazione''. 

\noindent{\bf Availability of data and materials:} Not applicable.

\def\cprime{$'$} \def\cprime{$'$}

\end{document}